\def\R{\mathbb{R}}
\def\N{\mathbb{N}}
\def\dive{\mathrm{div}}
\def\d{\mathrm{d}}
\def\dim{\mathrm{dim}}
\def\dist{\mathrm{dist}}
\def\H{\mathcal{H}}
\def\L{\mathcal{L}}
\def\C{\mathcal{C}}
\def\ca{\mathbbmss{1}}
\newtheorem{proposition}{Proposition}[section]
\newtheorem{theorem}{Theorem}[section]
\newtheorem{lemma}{Lemma}[section]
\newtheorem{corollary}{Corollary}[section]
\newtheorem{remark}{Remark}[section]
\numberwithin{equation}{section}
\numberwithin{theorem}{section}
\numberwithin{proposition}{section}
\numberwithin{lemma}{section}
\numberwithin{remark}{section}
\theoremstyle{definition}
\newtheorem{definition}[theorem]{Definition}
\newtheorem{conjecture}[theorem]{Conjecture}
\title{Dimensional lower bounds for contact surfaces of Cheeger sets}
\author{Marco Caroccia }
\address{Dipartimento di matematica, Politecnico di Milano Piazza Leonardo da Vinci 32, 20133 Milano (Mi)}{ }{}
\email{marco.caroccia@polimi.it}
\author{Simone Ciani}
\address{Dipartimento di Matematica e Informatica “U. Dini”, Università degli Studi di Firenze, viale G. Morgagni 67/A, 50134 Firenze, Italy}{}{}
\email{simone.ciani@unifi.it}
\date{\today}
\begin{document}

\begin{abstract}
We carry out an analysis of the size of the contact surface between a Cheeger set $E$ and its ambient space $\Omega \subset \R^d$. By providing bounds on the Hausdorff dimension of the contact surface $\partial E \cap \partial \Omega$, we show a fruitful interplay between this size itself and the regularity of the boundaries. Eventually, we obtain sufficient conditions to infer that the contact surface has positive $(d-1)$ dimensional Hausdorff measure. Finally we prove by explicit examples in two dimensions that such bounds are optimal.
\end{abstract}

\keywords{Cheeger sets, Cheeger constant, Constant Mean Curvature, Removable singularities, PDEs}

\subjclass{49K20, 49Q10, 49Q20}

\maketitle

\tableofcontents
\addtocontents{toc}{\protect\setcounter{tocdepth}{1}}


\section{Introduction}
The Cheeger problem for a bounded set $\Omega \subset \R^d$ is a typical problem in the Calculus of Variations. It consists in determining the minimum of the functional
\[
\mathcal{F}(E)=  \frac{P(E)}{\L^d(E)}
\] \noindent among all sets $E \subset \Omega$ of finite perimeter. Here $P(E)$ denotes the distributional perimeter of $E$ and $\L^d(E)$ stands for the Lebesgue measure of $E$. This problem has been introduced by Jeff Cheeger in \cite{cheeger1969lower} to bound from below the first nontrivial eigenvalue of the Laplace-Beltrami operator on compact Riemannian manifolds. Lately, this problem received an independent, increasing interest and the associated literature is extremely rich. Some more recent contributions about Cheeger problem on convex sets are \cite{bucur2016faber}, \cite{bucur2018honeycomb},  \cite{caselleschambollenovaga2007}, while non-convex, maximal sets and clusters have been investigated in \cite{buttazzo2007selection}, \cite{caroccia2017cheeger} ,\cite{CarLit17},\cite{caroccia2014note},  \cite{leonardi2017cheeger}, \cite{leonardi2016cheeger},  and finally an approach to optimal regularity can be found in 
\cite{caselleschambollenovaga2010}. This list of contributions is far from being complete and it reflects mostly the interest of the authors toward the specific problem that we are about to describe. Two exhaustive surveys on the Cheeger problem in $\R^d$ can be found in \cite{leonardi2015overview}, \cite{parini2011introduction}, and in references therein. The minimum $h(\Omega)$ of the functional $\mathcal{F}$ (see section \ref{Cheeger-problem}) is called the \textit{Cheeger constant of $\Omega$} and a set $E\subset \Omega$ attaining such a minimum is called a \textit{Cheeger set of $\Omega$}. This isoperimetric constant $h(\Omega)$ can be interpreted also as the \textit{first eigenvalue of the $p$-Laplacian for $p=1$}, and it can be used to give an upper bound to the diameter of Riemannian manifolds with non-negative Ricci curvature (see for instance \cite{Cheeger4diameter}, \cite{kawohl2008p}). Whilst strong regularity properties of the internal boundary $\partial E\cap \Omega$ are known (see section \ref{Cheeger-problem}), the study of contact surfaces of Cheeger sets remained an interesting open problem.
\vskip0.1cm \noindent 
The \textit{contact surface of a Cheeger set} $E \subseteq \Omega$ is the set of points $\partial E \cap \partial \Omega$, where the two boundaries intersect. Given enough regularity on $\partial \Omega$, it is possible to derive a suitable regularity property on $\partial E$ around contact points, but much less is known about how small the set $\partial E\cap \partial \Omega$ can be. The nature of the problem would suggest that every Cheeger set will try to be as big as it can, since the ratio between ${P(rE)}$ and ${\L^d(rE)}$ scales as $1/r$. It is clear consequently that the contact set $\partial E\cap \partial \Omega$ will not be empty. This intuitive argument can be used (see Theorem \ref{thm:reg}) to deduce that $\H^0(\partial E\cap \partial \Omega)$ is greater than two, but it fails to reveal information on higher dimension. Indeed when $\partial \Omega$ has high singularities, such as cusps or angles, the Cheeger set may find convenient to avoid them and prefer a smaller perimeter than a bigger volume. This intriguing behaviour of $E$ complicates the understanding of the size of $\partial E\cap \partial \Omega$ in spite of its simple variational definition. The aim of this work is to fill this lack of knowledge. The answer that we provide is extremely precise and it links the regularity of $\partial \Omega$ to the Hausdorff dimension of the contact surface. We summarize the results of our main Theorems \ref{thm:mainThmCONTACT}, \ref{thm:sharpness} (presented in full generality in Section \ref{Sct:Main}) in the following statement.

\vskip0.2cm

\noindent\textbf{Theorem.} \textit{Let $\Omega \subset \R^d$ be a bounded set and let $E$ be a Cheeger set for $\Omega$. If $\partial \Omega$ has regularity of class $C^{1,\alpha}$ for some $\alpha\in(0,1]$ then   
 \[
 \H^{d-2+\alpha}(\partial E\cap \partial \Omega)>0.
 \] \noindent Moreover, if $\partial \Omega$ has regularity of class $C^{1}$ then  
 \[
 \H^{d-2}(\partial E\cap \partial \Omega)=+\infty.
 \] \noindent Finally, when $d=2$ the previous assertions are sharp in the following sense.
 \begin{itemize}
     \item [a)] There exists an open bounded set $\Omega\subset \R^2$ with Lipschitz boundary and having a Cheeger set $E\subset \Omega$ such that $\H^0(\partial E\cap \partial \Omega)$ is finite.
     \item[b)] For every $\alpha\in (0,1]$ there exists an open bounded set $\Omega\subset \R^2$ with boundary of class $C^{1,\alpha}$ such that $\dim_{\H}(\partial E\cap \partial \Omega)=\alpha$.
 \end{itemize}} \noindent
 It follows as an easy corollary that a boundary regularity of class $C^{1,1}$ ensures the positivity of $\H^{d-1}(\partial E\cap \partial \Omega)$. We show a brief application of our argument to Cheeger sets of convex domains. This application leads us to conclude that if $\Omega$ is convex then the Hausdorff measure of the contact set $\H^{d-1}(\partial E\cap \partial \Omega)$ is positive.\\

 \begin{figure}
     \centering
     \includegraphics[scale=0.8]{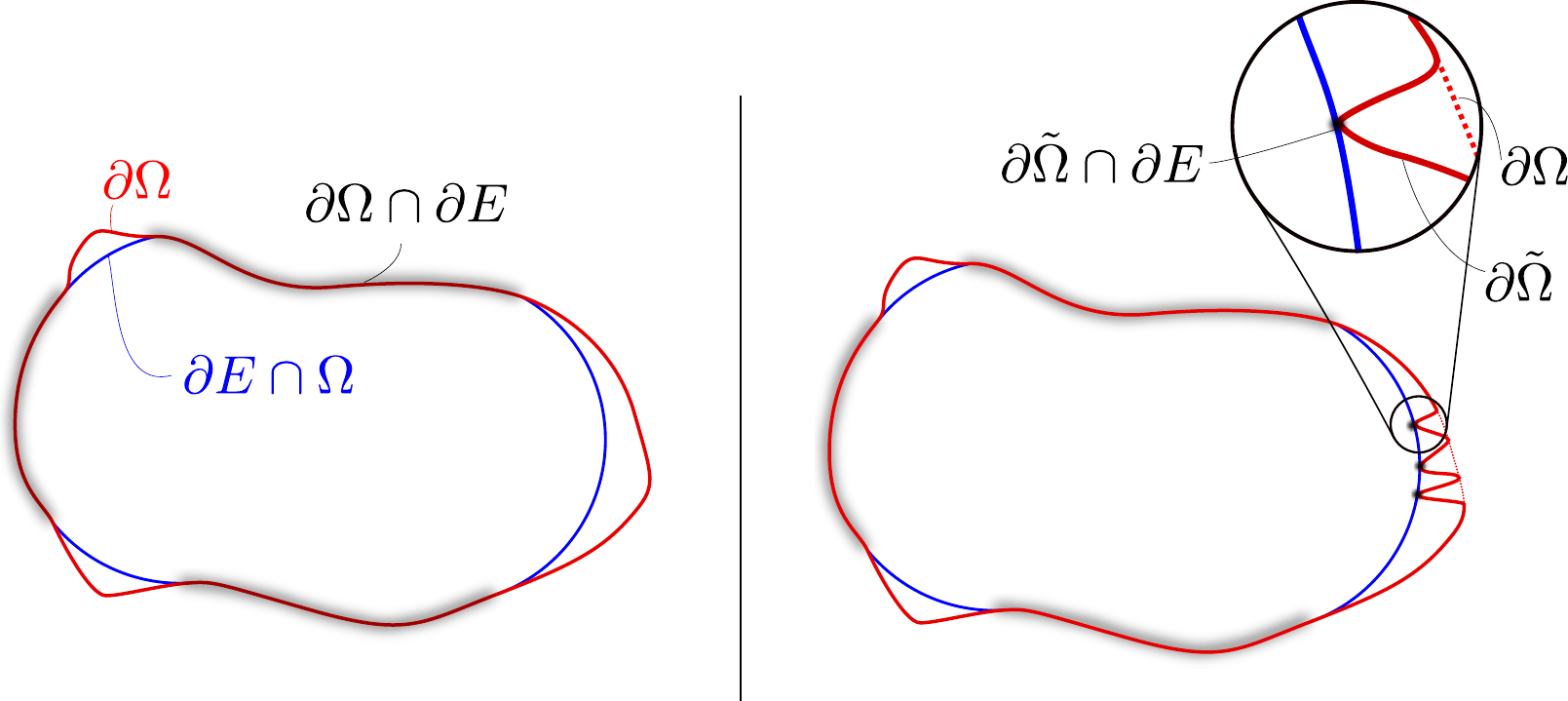}
     \caption{An ambient space $\Omega$ with a representation of one of its Cheeger sets. Provided $\Omega$ has boundary regularity of class $C^{1,1}$ we can ensure the positivity of $\H^{d-1}(\partial E\cap \partial \Omega)$. However there can be part of $\partial \Omega$ where locally the contact set has smaller dimension than $(d-1)$. Consider for example the small inner deformation of $\Omega$ into $\tilde{\Omega}$ producing a locally finite contact set.}
     \label{fig:FINALE}
 \end{figure}
 
 \subsection{Structure of the proof.}
 We start with an open set $\Omega$ with $C^{1,\alpha}$ boundary and we assume by contradiction that $\H^{d-2+\alpha}(\partial E\cap \partial \Omega)$ is zero. We know, from general theory about Cheeger sets, that the internal boundary $\partial E\cap \Omega$ is an analytic hyper-surface with constant mean curvature equal to $h(\Omega)$, apart from a singular set $\Sigma$ whose Hausdorff dimension is at most $d-8$. Now the idea is to extend this property to the whole $\partial E$, so that $E$ is shown to be a set of finite perimeter with constant mean curvature. A refined version of Alexandrov's Theorem (\cite{alexandrov1962characteristic}, \cite{delgadino2019alexandrov}) can be invoked: $E$ must be a finite union of balls. Then, the ambient space $\Omega$ must be a finite union of balls, as well. Now we can show that $\H^{d-2+\alpha}(\partial E\cap \partial \Omega)$ has to be positive, contradicting the hypothesis. The crucial step in the argument consists in the extension of the constant mean curvature property of $\partial E \cap \Omega$ to the whole $\partial E$. Technically, we have a set $E$ with constant distributional mean curvature on $\R^d\setminus (\Gamma\cup \Sigma)$ ($\Gamma=\partial E\cap \partial \Omega$) together with some information on the smallness of $\Gamma$ and $\Sigma$. We would like to combine this information in order to say that $E$ has constant (distributional) mean curvature on the full $\R^d$. The main problem is to remove $\Gamma$.\vskip0.2cm \noindent 
 In order to solve this problem, we make use of the concept of \textit{removable singularities}. We look locally at $\partial E$ as the graph of a function $f_E:A \subset \R^{d'}\rightarrow\R$, $d'=d-1$, which solves the constant mean curvature equation in a set $A \setminus \gamma$. Here $\gamma$ denotes the preimage of $\Gamma$ through the representation (see Section \ref{sct:NOTATION} for precise statements). Roughly speaking, given an open set $A\subset \R^{d'}$ with $d'=d-1$ and a differential operator $L$, a closed set $\gamma\subset A$ is called $L$-removable if, whenever $u$ is a weak solution to the equation
    \[
    Lu=0 \ \ \quad  \text{in $A\setminus \gamma $}
    \]
then $u$ is a weak solution to the equation
    \[
    Lu=0 \ \ \quad \text{in $A$}.
    \]
The literature on this topic is so rich that a complete list would fall outside of the scope of this introduction. For our purposes, we recall the pioneering work developed by Serrin in \cite{serrin1965isolated}, and later in \cite{Serrin2} for more general elliptic equations arising
from bounded conservation laws. On the particular case where $L$ represents the minimal surface equation, we refer to the theorems of De Giorgi-Stampacchia \cite{de1965sulle} and Simon \cite{simon1977theorem}. More recent advances in this topic can be found in \cite{de2004gauss}.\newline 
The typical statement infers that if $\gamma\subset \R^{d'}$ is a closed set with zero $1$-capacity, then $\gamma$ is removable. For instance a condition as $\H^{d'-1}(\gamma)=0$ would be enough to ensure that $\gamma$ is removable for the minimal surface equation. Unfortunately our hypothesis is that $\H^{d-2+\alpha}(f_E(\gamma))=\H^{d'-1+\alpha}(f_E(\gamma))$ is zero, and it does not imply directly that the underlying set $\gamma$ in $\R^{d'}$ has zero $1$-capacity. Therefore we are pushed to study the removability of sets with zero $(d'-1+\alpha)$-dimensional Hausdorff measure. The answer relies on the regularity of the weak solution. Indeed, if $u\in C^{1,\alpha}(A)$ is a weak solution of the Constant Mean Curvature equation in the set $A\setminus \gamma$, then it can be uniquely extended to a solution in the full set $A$, provided $\H^{d'-1+\alpha}(\gamma)=0$ (see Corollary \ref{cor:removability}). This trade-off had been observed by Pokrovskii in \cite{pokrovskii2005removable} for the minimal surfaces equation, in \cite{pokrovskii2005removable2} for the $p-$Laplacian equation and lately in \cite{pokrovskii2005removable3} for uniformly elliptic operators in divergence form. See \cite{pokrovskii2009removableSurvey} for an exhaustive survey on removable sets for elliptic operators in the $C^{1,\alpha}$ class. As we need to remove a singularity of the constant mean curvature equation, a slight adaptation of Pokrovskii's proof in \cite{pokrovskii2005removable} would suffice. 
\vskip0.2cm \noindent Nevertheless, a byproduct of our analysis shows that this trade-off between the size of the singular set and the regularity of the solution is actually a sole property of the divergence operator, that can be performed on a general H\"older continuous vector field. In detail, if we are given a vector field $F\in C^{0,\alpha}(A)$ such that
    \[
    -\dive(F)=g \ \ \ \text{weakly on $A\setminus \gamma$}, \quad \H^{d'-1+\alpha}(\gamma)=0, \quad g\in C^0(\R^{d'}),
    \] then the full equation
 \[
    -\dive(F)=g \ \ \ \text{weakly on $A$}
    \] \noindent is satisfied.
This property of the divergence operator was established by Ponce in \cite{ponce2013singularities}, when $\alpha=0$ and $\H^{d'-1}(\gamma)$ is finite. We state and prove this fact for $\alpha>0$ in Proposition \ref{propo:weakRem} (Section \ref{sct:removable}) and we use this result to deduce an alternative proof of the removability of closed sets in the constant mean curvature equation for $C^{1,\alpha}$ solutions. We think that this approach may have an interest on its own. \newline \noindent The argument is almost concluded: the hypothesis that $\partial E$ is locally the graph of a $C^{1,\alpha}$ function has been achieved in Lemma \ref{lem:impReg}, in the spirit of \cite{giaquinta1981remarks}. The variational inequality lying behind the Cheeger problem (Subsection \ref{sbsbsct:graph}) allows us to use the regularity techniques typical of obstacle problems. 
\vskip0.2cm \noindent Finally we comment the examples built to prove the sharpness of the dimensional bounds. 
The main ingredient required to prove Theorem \ref{thm:sharpness} relies on particular solutions of the constant mean curvature equation in dimension one (see Figure \ref{grafic} and \ref{fig:CANTOR1}).  These solutions solve the ordinary differential equation everywhere but on a closed set $\gamma$, whose prescribed Hausdorff dimension is $\alpha$. Consequently we suitably glue together these solutions in order to obtain a set $E$ which is self-Cheeger and that has constant mean curvature up to a set $\Gamma$ of the chosen Hausdorff dimension. Therefore a suitable family $\{\Omega_{\delta}\}_{\delta>0}$ of ambient spaces can be built around $E$ such that $\partial\Omega_{\delta} \cap \partial E=\Gamma$. The self-Cheeger property and the construction of $E$ ensure now that for some $\delta>0$ there exists a set $\Omega_{\delta}$ that has $E$ as one of its Cheeger sets. Another interesting geometric construction of pathological examples of Cheeger sets in the plane can be found in \cite{leonardi2018two}.

\begin{remark} 
{\rm The argument that we develop is global, and it does not give information on the local behaviour of $\partial E \cap \partial \Omega$. It is possible that locally the contact set has smaller size. Let us consider for instance the situation described in Figure \ref{fig:FINALE}. Here we start from a nice bounded set $\Omega$, where $\H^{1}(\partial \Omega\cap \partial E)$ is positive, and we deform locally the boundary $\partial \Omega \setminus \partial E$ so as to obtain some set $\tilde{\Omega}\subset \Omega $ whose boundary $\partial \tilde{\Omega}$ touches $\partial E$. If we move $\Omega$ inward until it touches $\partial E$, we do not change its Cheeger constant nor its Cheeger set. With this construction we can produce (locally) a set of contact points which behaves as wildly as we want.}
\end{remark}

\begin{remark}
{\rm The structure of the proof shows that, if $E \subseteq \Omega$ is a Cheeger set satisfying \[\H^{d-2+\alpha}(\partial E\cap \partial \Omega \cap A)=0,\] for an open set $A$ and $\partial E  \in C^{1,\alpha}(A)$, then the set $\partial E \cap A$ must be an analytic hyper-surface with constant mean curvature equal to $h(\Omega)$. This consideration leads us to expect that for every open set $A\subset \R^d$ and $\Omega$ convex either the set $\partial E\cap \partial \Omega \cap A$ is empty, or $\H^{d-1}(\partial E\cap \partial \Omega\cap A)$ is positive. In particular we observe that a recent result in \cite{canete} expresses a further step in this direction.}
\end{remark}

\begin{remark}
{ \rm We stress that this argument is more sensitive to the regularity of $\partial E$ than to the regularity of $\partial \Omega$. In particular if in our main Theorem we replace the assumption on the regularity of $\Omega$ with the same regularity on $\partial E$, then the bounds still hold true.}
\end{remark}

\subsection{Open problems.}
 We are able to prove the optimality of the dimensional bounds just in dimension two. The problem with the dimensional jump is the lack of tools concerning Cheeger sets, precisely Theorem \ref{giogian1} and Theorem \ref{giogian2}. We summarise the state of art on this subject in Table \ref{tabella}. 
\begin{table}
{\small
\center
\begin{tabular}{|c|c|c|} 
\hline
\textbf{Regularity properties of $\Omega$}         &   \textbf{Contact surface}  &  \textbf{Sharpness}  \\
\hline
$\partial \Omega$ Lipschitz  &$\H^0(\partial E\cap \partial \Omega)\geq 2$ & \begin{tabular}{c|c}
 
    if $d=2$ &  \begin{tabular}{c}
    \text{}\\
     There exists an open \\
     bounded set $\Omega$\\
     with Lipschitz boundary and \\ 
     $E\subset \Omega$ Cheeger set such that\\
     $\H^0(\partial E\cap \partial \Omega)<+\infty$\\
     \text{}
     \end{tabular}\\ 
 
\hline
\text{}\\
    if $d\geq 3$  &  No example known\\
    \text{}
\end{tabular}\\

\hline
$\partial \Omega \in C^1$  & $\H^{d-2}(\partial E\cap \partial \Omega)=+\infty $ &  \begin{tabular}{c}  \text{}\\
- \\
 \text{}
 \end{tabular}\\
 
\hline
 
\begin{tabular}{c}
    $\partial \Omega\in C^{1,\alpha}$ \\ for $\alpha\in (0,1)$
\end{tabular}& $ \H^{d-2+\alpha}(\partial E\cap \partial \Omega)>0$& 
\begin{tabular}{c|c}
 
    if $d=2$ &  \begin{tabular}{c}
    \text{}\\
     For any $\alpha\in (0,1)$ \\
     there exists an open\\
     bounded set $\Omega$\\
     with boundary of class $C^{1,\alpha}$ \\ and $E\subset \Omega$ Cheeger set\\
     such that 
     $\dim_{\H}(\partial E\cap \partial \Omega)=\alpha$\\
     \text{}
     \end{tabular}\\ 
 
\hline
\text{}\\
    if $d\geq 3$  &  No example known\\
    \text{}
\end{tabular}\\
 
\hline 

$\partial \Omega\in C^{1,1}$ &$\H^{d-1}(\partial E\cap \partial \Omega)> 0$ & \begin{tabular}{c}  \text{}\\
- \\
 \text{}
 \end{tabular}\\

\hline
$\Omega$ convex &$\H^{d-1}(\partial E\cap \partial \Omega)> 0$ &  \begin{tabular}{c}  \text{}\\
- \\
 \text{}
 \end{tabular}\\
 
\hline 
\end{tabular} \\
 \text{}
}
\caption{The behaviour of the contact surface in dependence of the regularity of $\partial \Omega$.}\label{tabella}
\end{table} \noindent 
Nevertheless, by the same approach, it is still possible to construct solutions that cannot be removed on sets whose prescribed Hausdorff dimension is $(d-2+\alpha)$. We believe that similar examples can be given in generic dimension, thus proving the sharpness of Theorem \ref{thm:mainThmCONTACT}. We formally state this assertion in the following conjecture.
 \begin{conjecture}\label{conj}
 For every $\alpha\in (0,1)$ there exists an open bounded set $\Omega\subset \R^d$ with boundary of class $C^{1,\alpha}$ and such that $\dim_{\H}(\partial E\cap \partial \Omega)=d-2+\alpha$.
 \end{conjecture}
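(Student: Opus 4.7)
The strategy is to mimic the planar construction of Theorem \ref{thm:sharpness} via a cylindrical product extension. The planar case, as outlined in the introduction, ultimately rests on a one-dimensional $C^{1,\alpha}$ function $g:I\to\R$ satisfying the ODE $(g'/\sqrt{1+g'^{2}})'=\kappa$ pointwise off a Cantor set $C\subset I$ with $\dim_{\H}(C)=\alpha$, with $g$ failing to solve the equation distributionally across $C$. The first observation is that the CMC graph equation decouples under trivial extension: defining $u:\R^{d-1}\to\R$ by $u(x_1,\dots,x_{d-1}):=g(x_1)$ yields a $C^{1,\alpha}$ function that satisfies
\[
\dive\!\left(\frac{\nabla u}{\sqrt{1+|\nabla u|^{2}}}\right)=\kappa
\]
on $\R^{d-1}\setminus\gamma$, where $\gamma=C\times\R^{d-2}$, while failing the equation distributionally across $\gamma$. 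Since $\dim_{\H}(\gamma)=(d-2)+\alpha$, this already supplies the correct \emph{singular profile}, and in particular shows that the removability threshold of Corollary \ref{cor:removability} is sharp in every dimension $d\geq 2$.

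The second step is to use $u$ to construct a bounded set $E\subset\R^d$ whose boundary, around a $(d-2+\alpha)$-dimensional piece $\Gamma$, coincides with the graph of $u$. The plan is to take $E$ as (essentially) the subgraph of $u$ over a large cube $Q\subset\R^{d-1}$ and then close it off by smooth CMC caps with the same mean curvature $\kappa/(d-1)$, namely pieces of Delaunay-type rotationally symmetric CMC hypersurfaces in $\R^d$. Matching the cylindrical singular profile to a rotationally symmetric cap must be done $C^{1,\alpha}$ across a $(d-2)$-dimensional junction curve lying away from $\gamma$, exactly as in the planar case where one only needs $C^1$ matching of two circular arcs at two points.

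The third step is the construction of the ambient $\Omega$. Following the planar scheme, I would build a family $\{\Omega_\delta\}_{\delta>0}$ by outward normal perturbation of $\partial E$ off the contact locus $\Gamma$ and tangential contact on $\Gamma$; since $\partial E\in C^{1,\alpha}$ by construction and $\Gamma$ is the graph of $\gamma$ under $u$, a careful cut-off preserves $\partial\Omega_\delta\in C^{1,\alpha}$ and gives $\partial\Omega_\delta\cap\partial E=\Gamma$ with $\dim_{\H}(\Gamma)=d-2+\alpha$. The fact that $E$ is a Cheeger set of $\Omega_\delta$ for suitably small $\delta$ then follows, as in the $d=2$ case, from the self-Cheeger property of $E$ inherited from its constant mean curvature boundary (together with a comparison against competitors, using that any competitor paying less perimeter must push mass out of the CMC hull, which $\Omega_\delta$ forbids).

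The main obstacle I expect lies in the second step. In $d=2$ the closing of the singular arc by two CMC arcs is essentially a matter of joining two tangent circular pieces, and the resulting planar set is automatically self-Cheeger. In higher dimension, the trivial extension of $g$ produces a profile with translation symmetry in $d-2$ directions, while the natural CMC caps (Delaunay surfaces) carry rotational symmetry, so the junction geometry no longer lines up automatically and the $C^{1,\alpha}$ matching condition becomes a non-trivial compatibility constraint along a $(d-2)$-dimensional locus. Overcoming this may require either (i) replacing pure translation invariance by a mildly modulated profile that asymptotically matches a Delaunay cap, or (ii) implementing a rounding procedure at the price of introducing a thin $C^{\infty}$-smooth CMC corrector near the junction, while arguing that the singular locus $\Gamma$ inside $\partial E$ is unaffected and thus retains full Hausdorff dimension $d-2+\alpha$.
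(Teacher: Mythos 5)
First, a point of comparison: the statement you set out to prove is stated in the paper only as Conjecture \ref{conj}. The authors do not prove it, and in the surrounding discussion they explicitly identify why the two-dimensional construction of Theorem \ref{thm:sharpness} does not survive the dimensional jump: the proofs rely on Theorem \ref{giogian1} (the inner-tangent-ball self-Cheeger criterion) and Theorem \ref{giogian2} (the characterization of the maximal Cheeger set of a no-neck planar domain as $\Omega^r\oplus B_r$, with $r$ the unique root of $\pi r^2=\L^2(\Omega^r)$), both of which are known only for $d=2$. So there is no proof in the paper to compare against, and your proposal must stand on its own; as written, it does not close the conjecture.

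Your first step is sound and is in fact conceded by the authors: the cylindrical extension $u(x)=g(x_1)$ shows that the removability threshold of Corollary \ref{cor:removability} is sharp in every dimension, which is a statement about the PDE, not about Cheeger sets. The genuine gap is in your third step, where you claim that $E$ being a Cheeger set of $\Omega_\delta$ ``follows, as in the $d=2$ case, from the self-Cheeger property of $E$ inherited from its constant mean curvature boundary.'' Constant mean curvature of $\partial E$ does not imply that $E$ is self-Cheeger in any dimension; in the planar construction the self-Cheeger property comes from Theorem \ref{giogian1} together with the calibration of $\ell_0$ forcing $\L^2(E(\ell_0)^{H^{-1}})=\pi H^{-2}$, and the persistence of $E$ as a Cheeger set of the perturbed ambient $\Omega_\delta$ is deduced from Theorem \ref{giogian2} via the identity $\Omega_\delta^{H^{-1}}=E(\ell_0)^{H^{-1}}$. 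Neither tool has a known analogue for $d\geq 3$, and your substitute argument --- that a competitor paying less perimeter ``must push mass out of the CMC hull, which $\Omega_\delta$ forbids'' --- is not a proof: Cheeger competitors are free to shrink strictly inside $E$, and excluding this is precisely what the planar criteria accomplish. The junction problem you flag in step two is real as well (a translation-invariant singular profile cannot be matched $C^{1,\alpha}$ to rotationally symmetric Delaunay caps along a $(d-2)$-dimensional locus without modifying the profile, and any corrector must be shown to preserve both the geometry needed for the Cheeger comparison and the full dimension of $\Gamma$), but it is secondary: even granting a smooth $C^{1,\alpha}$ closed hypersurface with the prescribed singular set, you still lack any mechanism to verify $h(\Omega_\delta)=P(E)/\L^d(E)$ with $E$ attaining it. Until you supply a higher-dimensional replacement for Theorems \ref{giogian1} and \ref{giogian2}, or a direct calibration argument for the Cheeger property of $E$ inside $\Omega_\delta$, the proposal is a plausible program for Conjecture \ref{conj}, not a proof of it.
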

 \noindent 
Finally, some links with the prescribed mean curvature problem are worth to be mentioned. Observe that the approach for the Hausdorff bounds produces also some information on the structure of the boundary of $C^{1,\alpha}$ sets with \textit{almost-constant mean curvature} (see Remark \ref{rmk:Self-Cheeger} for details). A similar analysis seems possible for the \text{prescribed mean curvature problem}, of which the Cheeger problem is a particular case. This problem consists in determining the set $E$  attaining the infimum
    \begin{equation}\label{ksk}
    h_{H}(\Omega):=\inf\left\{P(E)+H \L^d(E) \ | \ E\subseteq \Omega \right\}.
    \end{equation}
The internal boundary of the solution of this problem will have a constant mean curvature equal to $H$. In the works \cite{leonardi2018prescribed}, \cite{leonardi2019minimizers}, \cite{leonardi2020rigidity}, recent advances in this problem have been obtained. Notably in the paper \cite{leonardi2019minimizers} similar tools have been developed for the prescribed mean curvature problem, thereby making accessible the geometric construction of sharp examples.

\subsection{Plan of the paper.} In Section \ref{sct:NOTATION} we introduce the required notations and definitions. In Section \ref{Sct:Main} we collect the statements of our main results. This section collects also a brief summary in Table \ref{tabella}. This summary shows the behaviour of the contact surface in dependence of the regularity of $\partial \Omega$. In Section \ref{sct:removable} we develop the analysis of removable singularities for the divergence operator. Lately in Section \ref{ref:pfTHMs} we give the proof of the dimensional bounds. Finally in Section \ref{ref:pfExa} we explicitly construct for each choice of $\alpha\in (0,1)$ some two-dimensional Cheeger sets having finite Hausdorff dimension $\H^{\alpha}(\partial E\cap \partial \Omega)$ .\\
 
\subsubsection*{Acknowledgements}
 The work of MC has been supported by the grant "PRIN 2017 Variational methods for stationary and evolution problems with singularities and interfaces". MC is a member of INdAM-GNAMPA and has been partially supported by the INdAM-GNAMPA Project 2020 "Problemi isoperimetrici con anisotropie" (n.prot. U-UFMBAZ-2020-000798 15-04-2020). The work of SC is partially founded by INdAM (GNAMPA). The authors thank professor Francesco Maggi for the fruitful discussions concerning Proposition \ref{Prop:weakAlFINAL} and its link with Alexandrov's Theorem revisited \ref{thm:AlexRef}. The authors would also like to acknowledge Prof. Giuseppe Buttazzo for the precious discussions concerning the examples in Section \ref{ref:pfExa}. Finally, the authors are grateful to professor Vincenzo Vespri for his remarks and to Dott. Fulvio Gesmundo for the careful reading of the paper. This work has been developed during the COVID-19 lockdown that started in Italy in February 2020.
\section{Notation and preliminaries}\label{sct:NOTATION}

\subsection{General notation}
In the sequel we denote by $\Omega \subset \R^d$ an open bounded set and by $B_r(x)$ the ball of radius $r$ centered at $x$. Similarly $Q_r(x)$ denotes the cube of edge $r$ and centered at $x$. When $r=1,x=0$ we will simply write $B$ or $Q$. Write $\mathbb{S}^{d-1}$ for the $(d-1)$-dimensional sphere in $\R^d$, and $e_1,\dots,e_d$ for the canonical basis of $\R^d$. For $\nu\in \mathbb{S}^{d-1}$ define
    \[
    \nu^{\perp}:=\{z\in \R^d \ | \ z\cdot \nu=0\}.
    \]
  $\L^d$ is the $d$-dimensional Lebesgue measure and we set $\omega_d:=\L^d(B)$. We will usually write $\ca_{A}$ to denote the characteristic function of a set $A$:
    \begin{equation*}
        \ca_{A}(x):=\left\{ \begin{array}{ll}
            1 & \text{if $x\in A$}  \\
            0 & \text{if $x\notin A$}.
        \end{array}\right.
    \end{equation*}

\noindent Let $ Q^{\nu}_r$ be the cube in the space $\nu^{\perp}$ with edge $r$ and   define
    \[
    D^{\nu}_r(R)=\{y+t\nu \ | \ y\in Q^{\nu}_r, \  t\in (-R,R)\}
    \]
the parallelepiped oriented in the direction $\nu$ and having base $Q^{\nu}_r$. When we assume $\nu=e_d$, we omit the index $\nu$ and we write simply $D_r(R), Q_r$.
\begin{definition}[Boundary regularity for sets in $\R^d$]\label{def:BR}
Let $A\subset \R^d$ be a Borel set, $k \in \N, \alpha\in [0,1)$. We say that $\partial A$ has regularity of class $C^{k,\alpha}$ in a neighbourhood of a point $x\in \partial A$ if there exists $\nu\in \mathbb{S}^{d-1}$, $r,R>0$ and a function $f_A:Q^{\nu}_r \rightarrow (-R,R)   $, such that  $f_A\in C^{k,\alpha}(Q^{\nu}_r)$,
        \[
       |f_{A}(y)|<R \ \ \ \text{for all $y\in Q^{\nu}_r$}
        \]
    and  
    \begin{align*}
    (A-x)\cap  D^{\nu}_r(R)&=\{ z\in D^{\nu}_r(R)  \ | \ z\cdot \nu \leq f_A(\mathbf{p}_{\nu}(z)) \},\\
(\partial A-x)\cap D^{\nu}_r(R) &=\{ y+f_A(y)\nu \ | \ y\in Q^{\nu}_{r}  \},
    \end{align*}
being $\mathbf{p}_{\nu}:\R^d \rightarrow \nu^{\perp}$ the projection onto $\nu^{\perp}$. We call $(f_A, D_r^{\nu}(R))$ the \textit{graph representation of $\partial A$}. We do not specify the dependence of $f_A(y)=f_A^x(y)$ on the point $x$ to lighten the notation, the above dependence being clear from the context.\vskip0.1cm 
\noindent Given a Borel set $A\subset \R^d$ and an open set $U\subseteq \R^d$, we say that $A$ has boundary of class $C^{k,\alpha}$ in $U$ and we write $\partial A\in C^{k,\alpha}(U)$, if for every $ x\in \partial A \cap U$ the set $A$ has boundary of class $C^{k,\alpha}$ in a neighbourhood of $x$.\vskip0.1cm
\noindent Finally, we say that a Borel set $A\subset \R^d$  has boundary of class $C^{k,\alpha}$ and we simply write $\partial A\in C^{k,\alpha}$, if for every $x\in \partial A$, there is a neighbourhood $U_{x}$ where the previous definition applies. Similar taxonomy pertains sets with Lipschitz boundary.\vskip0.1cm \noindent 
We say that $M\subset \R^d$ is a $C^{k}$ hyper-surface (respectively analytic) if every point $y\in M$ has a neighborhood expressed by a graph representation $(f_M,D^{\nu}_{r}(R))$ such that $f_M\in C^k(Q_r^{\nu})$ (respectively $f_M$ analytic), through the expression
    \[
    (M-x)\cap D_{r}^{\nu}(R)=\{y+f_M(y)\nu \ | \ y\in Q^{\nu}_r\}.
    \]
\end{definition}
 \subsection{Hausdorff measure and Hausdorff dimension}\label{sbsct:HM}
Given $d\in \N$, $s\in [0,d]$ we define the \textit{Hausdorff pre-measure of step     $\delta$ on $\R^d$ of a set $E\subset \R^d$,} the number
    \begin{equation}\label{eqn:preHM}
    \H^s_{\delta}(E):=\inf\left\{\left. \sum_{j=1}^{+\infty} \omega_s \left(\frac{\mathrm{diam}(F_j)}{2}\right)^s \ \right| \ \begin{array}{c}
        \text{$\{F_j\}_{j\in \N}$ is a countable covering of $E$}\\
        \text{with sets $F_j$ s.t. $\mathrm{diam}(F_j)\leq \delta$ }
    \end{array}\right\},    
    \end{equation}
 where $\omega_s$ is a given constant (tipycally chosen to coincide with the measure of the unit $s$-dimensional ball for $s\in \N$, see \cite[Section 3.3]{maggiBOOK}). It can be shown that $\H^{s}_{\delta}$ is an outer measure on $\R^d$ and it is decreasing in $\delta$ (see \cite{maggiBOOK}). Therefore it is possible to define the \textit{$s$-dimensional Hausdorff outer measure} $\H^s$ with the limit
    \begin{equation}\label{eqn:HM}
    \H^s(E):=\sup_{\delta\geq 0}\{\H^s_{\delta}(E)\}=\lim_{\delta\rightarrow 0} \H^{s}_{\delta}(E).
    \end{equation} \noindent 
    Finally the \textit{Hausdorff dimension} of $E\subset \R^d$ can be defined via the following formula
     \begin{equation}\label{eqn:Hdim}
    \dim_{\H}(E):=\sup\{s\geq 0 \ | \ \H^s(E)=+\infty\} = \inf\{s\geq 0 \ | \ \H^s(E)=0\}.
    \end{equation}
\begin{remark}
{\rm The Hausdorff dimension of any open set $A\subset \R^d$ is $d$. Moreover, if $M$ is a $k$-dimensional manifold in the usual meaning  $\dim_{\H}(M)=k$.}
\end{remark} \noindent 
The following property of $\H^s$-negligible closed sets will be needed for the aim of removability: its proof is postponed to the appendix. Denote by $\mathcal{Q}_j$ a countable family of cubes of edge $2^{-j}$ yielding the dyadic division of $\R^d$ into a grid.
\begin{proposition}\label{propo:HausMesChar}
Let $N\subset \R^d$ be a closed set such that $\H^s(N)=0$. Then for each $\varepsilon >0$ there exists $j\in \mathbb{N}$ and a dyadic decomposition $\mathcal{Q}_j$ of $\R^d$ such that
    \[
    \sum_{\substack{Q\in \mathcal{Q}_{j}: \\ N\cap Q \neq \emptyset} } 2^{-js}\leq 2\varepsilon.
    \]
\end{proposition}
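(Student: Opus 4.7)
The plan is to deduce the claim directly from the definition of the Hausdorff outer measure~\eqref{eqn:HM}, by replacing a generic $\H^s$-economical cover of $N$ by a cover of dyadic cubes of comparable edge.  Given $\varepsilon>0$, since $\H^s(N)=0$ I pick $\delta>0$ small and select a countable family $\{F_i\}_{i\in\mathbb{N}}$ covering $N$ with $\mathrm{diam}(F_i)\leq\delta$ and
\[
\sum_{i\in\mathbb{N}} \omega_s \left(\frac{\mathrm{diam}(F_i)}{2}\right)^{s}\leq \eta\,\varepsilon,
\]
where $\eta=\eta(d,s)>0$ is a small dimensional constant to be fixed at the end.

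The key step is to replace each $F_i$ by a bounded number of dyadic cubes. For each $i$ let $j_i\in\mathbb{N}$ be the unique integer with $2^{-j_i-1}<\mathrm{diam}(F_i)\leq 2^{-j_i}$; since $F_i$ has diameter no larger than the edge $2^{-j_i}$ of a cube in $\mathcal{Q}_{j_i}$, at most $2^d$ cubes of that grid meet $F_i$, and I denote their collection by $\mathcal{A}_i\subset\mathcal{Q}_{j_i}$. The family $\bigcup_i \mathcal{A}_i$ then covers $N$, and a direct bookkeeping gives
\[
\sum_{i}\sum_{Q\in\mathcal{A}_i}2^{-j_is}\leq 2^{d+s}\sum_i\mathrm{diam}(F_i)^s\leq \frac{2^{d+2s}}{\omega_s}\,\eta\,\varepsilon,
\]
so by fixing $\eta:=\omega_s\,2^{-d-2s-1}$ the total $s$-content of this cover is at most $\varepsilon$.

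To finally reach a single grid $\mathcal{Q}_j$ as required by the statement, I use closedness of $N$ and its boundedness in the contexts where this proposition is applied (the singular set $\Sigma$ and the contact surface $\Gamma$ of a Cheeger set sitting in a bounded $\Omega$): by compactness only finitely many $F_i$ are needed, so the range of indices $\{j_i\}$ is finite. Choosing $j$ as a specific function of this finite subcover (essentially $\max_i j_i$) and letting $\mathcal{Q}_j$ be the subfamily of dyadic cubes at scale $2^{-j}$ that actually meet $N$, the estimate $\sum_{Q\cap N\neq\emptyset}2^{-js}\leq 2\varepsilon$ then follows by transferring $\bigcup_i\mathcal{A}_i$ into the fine grid and keeping only the cubes of $\mathcal{Q}_j$ that truly intersect $N$, the factor $2$ absorbing the residual dimensional constants.

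The hardest part will be precisely this last descent to a single scale: a naive refinement of an ancestor cube at scale $j_i$ down to scale $j$ produces up to $2^{(j-j_i)d}$ children, and brute-force counting would inflate the $s$-content by $2^{(j-j_i)(d-s)}$, which is unbounded when $s<d$ and $j\to\infty$. The way to avoid this blow-up, which lies at the heart of the proposition, is to discard children that do not meet $N$ and, crucially, to select $j$ in terms of the already-constructed cover rather than letting $j$ tend to infinity: compactness of $N$ then ensures that only a finite number of cubes of $\mathcal{Q}_j$ survives, and the dimensional constants remain under control.
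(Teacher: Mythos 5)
Your first two steps are correct and are essentially the classical comparison between $\H^s$ and dyadic net measures: starting from an efficient cover $\{F_i\}$ and replacing each $F_i$ by at most $2^d$ dyadic cubes of edge $2^{-j_i}$ comparable to $\mathrm{diam}(F_i)$, you obtain a cover of $N$ by dyadic cubes of \emph{varying} scales whose total $s$-content $\sum_i\sum_{Q\in\mathcal{A}_i}2^{-j_i s}$ is at most $C(d,s)\,\eta\,\varepsilon$. The genuine gap is exactly the step you yourself flag as hardest: the descent to a single scale $j=\max_i j_i$. Discarding the children that do not meet $N$ does not rescue the count, because the number of scale-$j$ cubes meeting $N$ inside an ancestor cube $Q\in\mathcal{A}_i$ is governed by the box-counting behaviour of $N\cap Q$ at scale $2^{-j}$, which is not controlled by its Hausdorff content; and compactness only yields that the surviving family is finite, not that its cardinality is of order $\varepsilon\,2^{js}$. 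Concretely, a bound of the form $\#\{Q\in\mathcal{Q}_j:\ Q\cap N\neq\emptyset\}\cdot 2^{-js}\leq 2\varepsilon$ at one scale is a lower box-content statement and cannot be extracted from $\H^s(N)=0$ alone: for $N=\{0\}\cup\{1/n \ : \ n\in\N\}\subset\R$ one has $\H^s(N)=0$ for every $s>0$, yet the number of dyadic intervals of length $2^{-j}$ meeting $N$ is at least $c\,2^{j/2}$, so for $s<1/2$ the quantity $\#\{Q\in\mathcal{Q}_j:\ Q\cap N\neq\emptyset\}\cdot 2^{-js}$ stays bounded away from zero uniformly in $j$, and no choice of $j$ (however cleverly tied to your cover) produces the required smallness once $\varepsilon$ is small. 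So the last step is not a residual constant to be absorbed into the factor $2$; it is where the argument breaks down, and it cannot be repaired along the lines you indicate.

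For comparison, the paper's own proof does not construct the cover by hand but invokes a ``dyadic Hausdorff measure'' defined through single-scale cube counts and its comparability with $\H^s$; the robust content of such comparability results is precisely the multi-scale statement your first two steps establish, namely coverings by dyadic cubes of possibly \emph{different} edges with small total $s$-content. Note that this weaker, multi-scale version is also all that the later applications really use: in Propositions \ref{propo:weakRem} and \ref{Prop:weakAlFINAL} the error terms are estimated cube by cube, so equal edge lengths play no essential role there. If you wish to keep your construction, state and prove the multi-scale version (finitely many dyadic cubes $Q_i$ of edges $2^{-j_i}$ covering the compact set $N$ with $\sum_i 2^{-j_i s}\leq\varepsilon$); as a proof of the single-scale statement as literally written, your final reduction is a genuine gap.
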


\subsection{Sets of finite perimeter}\label{sbsct:per}
For a Borel set $E\subset \R^d$ and $\Omega\subset \R^d$ an open set we define \textit{the distributional perimeter of $E$ in $\Omega$ as}
    \begin{align*}
        P(E;\Omega)&=\sup\left\{\left.\int_{E}\dive (T) \d x\  \right|\   T\in C^{\infty}(\Omega;\R^d), \ \|T\|_{\infty}\leq 1\right\}.
    \end{align*}
We say that $E$ is a \textit{set of finite perimeter} if
    \[
    P(E)=P(E;\R^d)<+\infty.
    \]
Thanks to the De Giorgi's structure Theorem \cite{DeGiorgiSOFP1}, \cite{DeGiorgiSOFP2}, for every set of finite perimeter $E$, there exists a $(d-1)$-rectifiable set $\partial^*E\subset \partial E$ and a unitary $\H^{d-1}$-measurable vector field $\nu_E:\partial^* E\rightarrow \mathbb{S}^{d-1}$ such that
    \[
    \int_E \dive(T) \d x = \int_{\partial^* E} \nu_E(x)\cdot T(x) \d \H^{d-1}(x) \ \ \ \text{for all $T\in C^{\infty}_c(\R^d)$},
    \]
where $\H^{d-1}$ is the $(d-1)$-dimensional Hausdorff measure (see Subsection \ref{sbsct:HM}). The set $\partial^* E$ is called \textit{reduced boundary of $E$}, and it satisfies \[ P(E;\Omega)=\H^{d-1}(\partial^* E\cap \Omega)=:\H^{d-1}\llcorner_{ \partial^* E}(\Omega).\] 
When the topological boundary is regular enough, the reduced boundary coincides with $\partial E$ itself. A sufficient condition is $\partial E \in C^1$. We say that a set $E$ has distributional mean curvature $H_E$ if there exists a locally summable $\H^{d-1}$- measurable map $H_E:\partial^* E\rightarrow \R$ such that
      \[
    \int_{\partial^* E} \dive_E(T) \d \H^{d-1}(x) = \int_{\partial^* E} (\nu_E(x)\cdot T(x)) H_E(x) \d \H^{d-1}(x) \ \ \ \text{for all $T\in C^{\infty}_c(\R^d)$},
    \]
where
    \[
    \dive_E(T)(x):=\dive(T)(x)- \nu_E(x) \nabla T(x) \nu_E(x),
    \]
defined $\H^{d-1}$-a.e. on $\partial^* E$. We make frequent use also of the \textit{tangential gradient} of a function $u \in C^{\infty}(\Omega)$,
    \[
    \nabla^E u(x):= \nabla u - (\nu_E(x)\cdot \nabla u(x))\nu_E(x) \ \ \ \text{on}\ \  \partial^* E.
    \]
\begin{definition}\label{def:CMC}
Given a set of finite perimeter $E$, we say that $E$ has constant distributional mean curvature $H$ in an open set $U$ and we write
    \[
    H_E=H \ \ \text{distributionally on $U$,}
    \]
if
\[
\int_{\partial^* E } \dive_E(T) \d \H^{d-1}(x) = \int_{\partial^* E}  H (\nu_E(x)\cdot T(x))\d \H^{d-1}(x) \ \ \ \text{for all $T\in C^{\infty}_c(U)$}.
\]
\end{definition} \noindent 
When $\partial E \in C^2$ the distributional mean curvature defined above for $\partial E$ coincides with the usual mean curvature for hyper-surfaces up to the multiplicative constant $\sfrac{1}{(d-1)}$.
 \begin{remark} \label{rmk:MCBalls}
 {\rm For a set of finite perimeter $E$ with $\partial E \in C^2$, the distributional mean curvature  $H_E :\partial E\rightarrow \R$ coincides, up to a multiplicative constant $(d-1)$, with the usual definition of mean curvature for hyper-surfaces. To clarify this point consider $E=B_R$ a ball of radius $R$ and test Definition \ref{def:CMC} with $T(x)=\frac{x}{|x|}=\nu_{B_R}(x)$ to get
    \[ \nabla T= \frac{1}{|x|}\left[\mathrm{Id} - \frac{x}{|x|}\otimes \frac{x}{|x|}\right],
    \quad \quad  \text{and} \quad \quad 
    \dive_{B_R}(T)=\frac{(d-1)}{|x|}. \]
Thus
    \begin{align*}
    (d-1) P(B_R) R^{-1}&=\int_{\partial B_R}\dive_{B_R}(T)\d\mathcal{H}^{d-1}(y)=\int_{\partial B_R} (T\cdot \nu_{B_R}(y))H_{B_R}\d\mathcal{H}^{d-1}(y)\\
    &=P(B_R)H_{B_R},
    \end{align*}
yielding
    \[
    H_{B_R}=\frac{d-1}{R}.
    \] 
    }
    \end{remark}
\begin{definition}[Indecomposability for sets of finite perimeter]\label{def:ind}
A set of finite perimeter $E$ is \textit{decomposable} if there exists two sets $E_i$ of finite perimeter and positive measure, such that $\L^d(E_1\cap E_2)=0$ (namely $E_1,E_2$ are \textit{essentially disjoint}), $E=E_1\cup E_2$ and $P(E)=P(E_1)+P(E_2)$. In this case we say that $E_1, E_2$ decompose $E$.  $E$ is \textit{indecomposable} if it is not decomposable. Call $\{E_i\}_{i=1}^k\subset E$ indecomposable components of $E$ if $E_1,\ldots, E_k$ decompose $E$ and each $E_i$ is indecomposable.
\end{definition} \noindent We refer to \cite{ambrosio2000functions}, \cite{maggiBOOK} for more details on these topics. The following is a refined version of the well-known Alexandrov's Theorem, stated for sets of finite perimeter: it will be crucial for our dimensional analysis. The result has been achieved in \cite[Theorem 1, Corollary 2] {delgadino2019alexandrov}.
\begin{theorem}[Alexandrov's Theorem revisited]\label{thm:AlexRef}
Let $E$ be a set of finite perimeter and finite volume. Suppose that there exists a constant $H>0$ such that
    \[
    \int_{\partial^* E} \dive_E (T)\d\H^{d-1}(x)=\int_{\partial^* E} H(T\cdot \nu_E) \d\H^{d-1}(x) \ \ \ \text{for all $T\in C^{\infty}_c(\R^d;\R^d)$.}
    \]
Then $E$ is the union of a finite number of essentially disjoint balls of radius $\frac{d-1 }{H}$.
\end{theorem}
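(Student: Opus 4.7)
The plan is to deduce the conclusion from the Heintze--Karcher inequality for sets of finite perimeter established by Delgadino and Maggi. The argument consists of combining one explicit identity, coming from the constant mean curvature hypothesis, with one sharp inequality whose rigidity case characterises unions of balls.

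First, I would test the distributional CMC identity against a compactly supported approximation of the position vector field $T(x)=x$. Computing $\dive_E(x) = \dive(x) - \nu_E \cdot \nabla(x)\,\nu_E = d - |\nu_E|^2 = d-1$ for $\H^{d-1}$-a.e.\ point of $\partial^* E$, the hypothesis yields
\[
(d-1)\,P(E) \;=\; H \int_{\partial^* E} x\cdot \nu_E \, \d \H^{d-1}(x).
\]
On the other hand, the generalised Gauss--Green theorem applied to $T(x)=x$ on the set of finite perimeter $E$ gives $\int_{\partial^* E} x\cdot \nu_E\, \d\H^{d-1} = d\, \L^d(E)$. Combining these we obtain the sharp isoperimetric-type identity
\[
(d-1)\,P(E) \;=\; H\, d\, \L^d(E).
\]

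Second, I would invoke the Heintze--Karcher inequality for sets of finite perimeter with a well-defined distributional mean curvature $H_E$, namely
\[
d\, \L^d(E) \;\leq\; (d-1)\int_{\partial^* E} \frac{1}{H_E(x)}\, \d \H^{d-1}(x),
\]
together with its rigidity statement: equality holds if and only if $E$ is (essentially) a finite disjoint union of balls of radius $(d-1)/H_E$. Under our hypothesis $H_E \equiv H$, the right-hand side equals $(d-1)P(E)/H$, and the identity obtained above forces equality. The rigidity therefore yields that $E$ is essentially a disjoint union of balls of radius $(d-1)/H$, and finiteness of the collection is immediate from $\L^d(E)<\infty$ since every ball in the decomposition has the same positive volume.

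The main technical point is the justification of using the non-compactly-supported field $T(x)=x$: one works with $T_R(x)=\eta_R(x)\,x$, where $\eta_R \in C^\infty_c(\R^d)$ equals $1$ on $B_R$ and is supported in $B_{2R}$, and passes to the limit $R\to\infty$. Both $P(E)<\infty$ and $\L^d(E)<\infty$ allow dominated convergence on each side of the CMC identity and of the Gauss--Green formula, so the desired identity is recovered. The truly deep ingredient, which I would cite rather than reprove, is the Heintze--Karcher inequality in the non-smooth setting, established by Delgadino--Maggi precisely to handle CMC sets of finite perimeter without a priori regularity; the rest of the argument, as sketched, only assembles this inequality with the identity coming from the position field.
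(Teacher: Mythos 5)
The paper offers no proof of Theorem \ref{thm:AlexRef}: it is imported as a known result, with the attribution to \cite[Theorem 1, Corollary 2]{delgadino2019alexandrov} given immediately before the statement. Your proposal is essentially an unpacking of the strategy of that reference (the Montiel--Ros scheme): the Minkowski-type identity $(d-1)P(E)=H\,d\,\L^d(E)$ obtained from the position field is correct and consistent with the paper's conventions (cf.\ Remark \ref{rmk:MCBalls}), and once a Heintze--Karcher inequality with a characterization of equality is available for sets of finite perimeter, your assembly --- equality forced by the identity, rigidity giving essentially disjoint (possibly tangent) balls of radius $(d-1)/H$, finiteness from $\L^d(E)<+\infty$ --- is sound. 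Two caveats. First, the Heintze--Karcher inequality and its equality analysis in this non-smooth generality are precisely the hard core of Delgadino--Maggi's proof of this very theorem; they are not an independent off-the-shelf tool, so your argument gains nothing over the paper's direct citation, and if you present it this way you must point to the exact statement of \cite{delgadino2019alexandrov} in which the inequality and its rigidity are formulated for distributional mean curvature, rather than to a generic ``Heintze--Karcher in the non-smooth setting''. Second, a small technical repair: $E$ is not known a priori to be bounded, so $x\cdot\nu_E$ need not be $\H^{d-1}$-summable on $\partial^*E$ and dominated convergence cannot be invoked directly for the boundary term $\int_{\partial^*E}\eta_R\,x\cdot\nu_E\,\d\H^{d-1}$; instead, convert this term at each fixed $R$ into $\int_E\dive(\eta_R x)\,\d x$ by the Gauss--Green theorem and pass to the limit there, while on the other side $\bigl|\int_{\partial^*E}(\nabla^E\eta_R\cdot x)\,\d\H^{d-1}\bigr|\leq C\,P(E;B_{2R}\setminus B_R)\rightarrow 0$ because $P(E)<+\infty$.
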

\begin{remark}\label{rmk:Indonecomp}
{\rm In light of Theorem \ref{thm:AlexRef} and with respect to the Definition of indecomposability \ref{def:ind} it is easy to check that an indecomposable set $E$ with constant distributional mean curvature has to be a single ball.}
\end{remark}
\subsubsection{Constant mean-curvature sets}\label{sbsb:REGCMC}
Let $U$ be an open set and for some $H \in \R$ let $u\in H^1(U)$  be a weak solution to the equation
    \[
    -\dive\left(\frac{\nabla u}{\sqrt{1+|\nabla u|^2}}\right)=H \ \ \ \text{on $U$}.
    \]
Then it is a well-known fact (see for instance \cite{Ennio} or Theorem 3.2 of \cite{Ennio+Guido}) that $u$ is an analytic pointwise solution to the equation
  \[
    -\dive\left(\frac{\nabla u(x)}{\sqrt{1+|\nabla u(x)|^2}}\right)=H \ \ \ \text{on $U$}.
    \]
In particular if a set of finite perimeter $E$ satisfies
    \[
    H_E=H \ \ \text{distributionally on $ U$}
    \]
 we can conclude that $\partial^* E\cap U$ is an analytic hyper-surface with constant mean-curvature.
\subsection{The Cheeger problem} \label{Cheeger-problem}
Given an open bounded set $\Omega\subset \R^d$, the \textit{Cheeger constant of $\Omega$} (see for instance \cite{leonardi2015overview}) is 
    \[
    h(\Omega):=\inf_{E\subset \Omega}\left\{\frac{P(E)}{\L^d(E)}\right\},
    \]
where the infimum is taken among the sets of finite perimeter contained in $\Omega$. We collect in the following Theorem some classically known facts about Cheeger sets. For further literature we refer to \cite{gonzalez1981minimal}.
\begin{theorem}\label{thm:reg} 
Let $\Omega\subset\R^d$ be an open bounded set. Then 
the following statements hold:
\begin{itemize}
    \item[(I)] there exists at least one Cheeger set $E$ of $\Omega$.
    \item[(II)] $\partial^* E \cap \Omega$ is an analytic hyper-surface with constant distributional mean curvature equal to $ h(\Omega)$ and the singular set $\Sigma:=(\partial E\setminus \partial^* E)\cap \Omega$ is closed and has Hausdorff dimension at most $d-8$.
    \item[(III)] If $E$ is a Cheeger set of $\Omega$ and $\Omega$ has finite perimeter then $\partial E\cap \Omega$ can intersect $\partial^*\Omega$ only in a tangential way. This means that $\partial E\cap \partial^*\Omega=\partial^*E\cap \partial^* \Omega$ and for all $x\in \partial^* E\cap\partial^* \Omega$ it holds  $\nu_E(x)=\nu_{\Omega}(x)$.
    \item[(IV)] If $\partial \Omega\in C^1$ then  $\partial E$ has regularity of class $C^{1}$ in a neighbourhood of any $x\in \partial E\cap \partial \Omega$;  
    \item[(V)] If $\partial \Omega\in C^{1,1}$ then  $\partial E$ has regularity of class $C^{1,1}$ in a neighbourhood of any $x\in \partial E\cap \partial \Omega$;
    \item[(VI)] If $\Omega$ is convex then there exists a unique Cheeger set $E$. Moreover $\partial E$ has regularity of class $C^{1,1}$ in a neighbourhood of any $x\in \partial E\cap \partial \Omega$;
    \item[(VII)] For every Cheeger set $E$ of $\Omega$ we have $\H^{0}(\partial E\cap \partial \Omega)\geq 2 $.
\end{itemize}
\end{theorem}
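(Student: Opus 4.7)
The plan is to tackle items (I)--(VII) in turn, relying on the classical regularity theory for almost-minimizers of perimeter together with scaling and comparison arguments specific to the Cheeger problem.

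For (I), I would apply the direct method to a minimizing sequence $E_n\subset \Omega$: the isoperimetric inequality $P(E_n)\geq c_d\L^d(E_n)^{(d-1)/d}$ combined with $P(E_n)/\L^d(E_n)\to h(\Omega)$ forces a uniform lower bound on $\L^d(E_n)$, so $BV$ compactness plus lower semicontinuity of the perimeter extracts a subsequential Cheeger set. For (II), volume-preserving inner/outer variations on balls $B\Subset\Omega$ show that $E$ is a $\Lambda$-minimizer of perimeter inside $\Omega$ with $\Lambda=h(\Omega)$; the De Giorgi--Almgren--Federer regularity for $\Lambda$-minimizers then yields that $\partial^*E\cap\Omega$ is $C^{1,\beta}$ with $\dim_{\H}$ of the singular set at most $d-8$, and an elliptic bootstrap on the constant mean curvature equation upgrades $C^{1,\beta}$ to analytic, with the Euler--Lagrange equation pinning the value of the curvature to $h(\Omega)$.

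For (III)--(V) I would work locally at a contact point $x\in\partial E\cap\partial^*\Omega$. If $\nu_E(x)\neq\nu_\Omega(x)$, a small piece of $\Omega\setminus E$ near $x$ could be added to $E$ while preserving $E\subset\Omega$, strictly decreasing the ratio $P/\L^d$; hence the normals must coincide, which proves (III) and rules out transversal intersections. Once tangency is established, I would represent $\partial E$ locally as the graph of a function $f_E$ over the tangent hyperplane to $\partial\Omega$; outside the contact set $f_E$ solves the constant mean curvature equation with right-hand side $h(\Omega)$, and on the contact set it coincides with the obstacle $f_\Omega$. Classical obstacle-problem regularity then transfers $C^1$ (resp.\ $C^{1,1}$) regularity from the obstacle to the solution, proving (IV) and (V); the $C^{1,1}$ threshold is natural because the mean curvature of $f_E$ jumps across the free boundary separating the contact set from the CMC region. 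Item (VI) combines the uniqueness of the Cheeger set on convex bodies (Caselles--Chambolle--Novaga) with (V).

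Finally, for (VII) I would use a rescaling--translation dichotomy. If $\partial E\cap\partial\Omega=\emptyset$, then $E\Subset\Omega$ and some $\lambda>1$ satisfies $\lambda E\subset\Omega$; but then $P(\lambda E)/\L^d(\lambda E)=\lambda^{-1}h(\Omega)<h(\Omega)$, contradicting the Cheeger property. If $\partial E\cap\partial\Omega=\{x_0\}$ consists of a single point, a small translation of $E$ by a vector pointing into $\Omega$ at $x_0$ separates $E$ from $\partial\Omega$ and reduces to the previous case; hence at least two contact points must exist. The main obstacle in executing the plan is the free-boundary regularity in (IV)--(V), which demands a careful obstacle-problem setup exploiting the variational inequality behind Cheeger minimality; the remaining items are essentially a consolidation of well-established results from the Cheeger literature.
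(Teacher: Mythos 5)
Your treatment of (VII) is exactly the paper's argument: dilation $\lambda E\subset\Omega$ when the contact set is empty, and a small translation followed by the same dilation when it is a single point. For (I)--(VI) the paper does not give proofs at all: it cites \cite{leonardi2015overview}, \cite{parini2011introduction} for (I)--(III), \cite{miranda1971frontiere} for (IV), and \cite{parini2011introduction}, \cite{caselleschambollenovaga2010} for (V)--(VI). Your sketches for (I) (direct method with the isoperimetric lower bound on the volume) and (II) ($\Lambda$-minimality with $\Lambda=h(\Omega)$, De Giorgi--Almgren--Federer regularity, elliptic bootstrap to analyticity) are the standard proofs underlying those citations and are fine as a plan, so to that extent the two routes agree: you re-derive what the paper imports.

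The genuine weak point is your treatment of (III)--(V). First, (III) asserts more than coincidence of normals at points of $\partial^*E\cap\partial^*\Omega$: it asserts $\partial E\cap\partial^*\Omega=\partial^*E\cap\partial^*\Omega$, i.e.\ that no point of the topological boundary of $E$ lying on $\partial^*\Omega$ is a singular (non-reduced) boundary point of $E$. Your wedge-adding heuristic addresses only the normal comparison (and even there the measure-theoretic blow-up argument, using $E\subset\Omega$ and the fact that two nested half-spaces through the origin coincide, is what actually works); it does not exclude singular points on $\partial^*\Omega$, which requires boundary density estimates in the spirit of \cite{gonzalez1981minimal}. Second, for (IV)--(V) your plan is circular as stated: writing $\partial E$ near a contact point as a graph $f_E$ over the common tangent plane, so that the obstacle-problem machinery applies, already presupposes that $\partial E$ is $C^1$ up to that point --- which is precisely assertion (IV). The actual content of (IV) is the boundary regularity theory for perimeter minimizers constrained by an obstacle (Miranda; Barozzi--Massari type results), and only once that $C^1$ graph representation is secured can one run the obstacle comparison to upgrade regularity --- this is exactly how the paper proceeds later in Lemma \ref{lem:impReg} for the $C^{1,\alpha}$ case, taking (IV) as an input. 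So either you invoke those references, as the paper does, or you must supply the $\varepsilon$-regularity/density-estimate argument up to the obstacle; as written, that step is missing.
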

\begin{remark}
\rm{ Notice that the value stated in assertion (II) is consistent with the Definition of distributional mean curvature \ref{def:CMC} (cf. with Remark ]\ref{rmk:MCBalls}). }
\end{remark}
\begin{proof}
Assertions (I)-(III) can be found in \cite[Proposition 3.5]{leonardi2015overview}, \cite{parini2011introduction}. Assertion (IV) comes as a consequence of \cite{miranda1971frontiere} and assertions (V), (VI) are shown in \cite[Proposition 4.3, Proposition 5.2]{parini2011introduction}, \cite[Theorem 2]{caselleschambollenovaga2010}.\vskip0.1cm \noindent 
Therefore we just comment on the last one. Let us suppose by contradiction that $\H^{0}(\partial E\cap \partial \Omega)=0$, then $\dist(\partial E,\partial \Omega)>0$ and we can dilate a bit $E$ into $\lambda E\subset \Omega$, $\lambda>1$. Hence we obtain \[
\frac{P(\lambda E)}{\L^ d(\lambda E)}=\frac{P(E)}{\lambda \L^d(E)}<\frac{P(E)}{\L^d(E)},\] \noindent contradicting the fact that $E$ is a Cheeger set. But also, if $\H^{0}(\partial E \cap \partial \Omega)=1$ we can find a small translation $\tau$ such that $E+\tau \subset \Omega$ and with $\dist(\tau+\partial E,\partial \Omega)>0$. Then we could apply again the dilation argument and denying the minimality property of $E$.
\end{proof} \noindent 
In the proof of Theorem \ref{thm:mainThmCONTACT} we will assume $E$ to be indecomposable. The following Proposition states that for each decomposition of the Cheeger set $E$ into indecomposable components, there exists necessarily one of those which is a Cheeger set too. 
\begin{proposition}
\label{rmk:ind}
Let $E\subset \Omega$ and suppose that $E$ has two indecomposable components $E_1,E_2$. Then it holds
    \[
  h(\Omega)=\min\left\{ \frac{P(E_1)}{\L^d(E_1)}, \frac{P(E_2)}{\L^d(E_2)}\right\}.
    \]\end{proposition}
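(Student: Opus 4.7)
The plan is to exploit the elementary mediant inequality together with the minimality of $E$ as a Cheeger set. Writing $r_i := P(E_i)/\L^d(E_i)$ for $i=1,2$, the decomposition hypothesis gives $P(E)=P(E_1)+P(E_2)$ and $\L^d(E)=\L^d(E_1)+\L^d(E_2)$ (the latter from essential disjointness). Consequently the ratio $P(E)/\L^d(E)$ is the convex combination
\[
\frac{P(E)}{\L^d(E)}=\lambda_1 r_1+\lambda_2 r_2,\qquad \lambda_i:=\frac{\L^d(E_i)}{\L^d(E)}\in(0,1),\ \ \lambda_1+\lambda_2=1,
\]
which of course lies between $\min\{r_1,r_2\}$ and $\max\{r_1,r_2\}$.

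Next, since $E$ is a Cheeger set of $\Omega$, by definition $P(E)/\L^d(E)=h(\Omega)$. On the other hand each $E_i$ is a set of finite perimeter with $E_i\subseteq E\subseteq \Omega$, so by the very definition of the Cheeger constant we have $r_i\geq h(\Omega)$ for both $i=1,2$. Combining this with the mediant identity,
\[
h(\Omega)=\lambda_1 r_1+\lambda_2 r_2\geq \min\{r_1,r_2\}\geq h(\Omega),
\]
and hence equality holds throughout, giving $\min\{r_1,r_2\}=h(\Omega)$. (In fact, since the mediant of two numbers coincides with their minimum only when the two numbers are equal, one actually obtains $r_1=r_2=h(\Omega)$, so both components are themselves Cheeger sets of $\Omega$, which is the stronger conclusion promised in the paragraph preceding the statement.)

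The argument is routine once one knows the decomposition identities for volume and perimeter, so there is no real obstacle; the only point to verify carefully is that $P(E_i)$ and $\L^d(E_i)$ are both strictly positive (guaranteed by Definition \ref{def:ind}, which requires positive measure and which, together with the isoperimetric inequality, forces positive perimeter), so that the ratios $r_i$ are well-defined real numbers and the convex combination makes sense.
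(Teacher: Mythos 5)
Your proof is correct and follows essentially the same route as the paper: both arguments rest on the mediant inequality (the paper obtains $h(\Omega)=\frac{P(E_1)+P(E_2)}{\L^d(E_1)+\L^d(E_2)}\geq \min\{r_1,r_2\}$ by cross-multiplication after ordering the ratios, you by writing the quotient as a convex combination) combined with $r_i\geq h(\Omega)$ from admissibility of each $E_i\subseteq\Omega$. Your parenthetical observation that strict positivity of the weights forces $r_1=r_2=h(\Omega)$ is a correct (slight) strengthening of what the paper records, and your care about positivity of $\L^d(E_i)$ is a harmless addition.
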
 \noindent 
\begin{proof} Assume by contradiction that
    \[
     \frac{P(E_1)}{\L^d(E_1)}\geq \frac{P(E_2)}{\L^d(E_2)},
    \]
then we have
    \[
  P(E_2)\L^d(E_2)+P(E_1)\L^d(E_2)\geq P(E_2)\L^d(E_2)+ P(E_2)\L^d(E_1).
  \]
 In particular
  \[
  h(\Omega)=\frac{P(E_2)+P(E_1)}{\L^d(E_1)+\L^d(E_2)}\geq \frac{P(E_2)}{\L^d(E_2)}\geq h(\Omega),
    \]
and thus $E_2$ is a Cheeger set for $\Omega$.
\end{proof}

\begin{definition}[Self-Cheeger sets]
We say that a set of finite perimeter $E$ is a \textit{self-Cheeger set} if it holds \[\frac{P(E)}{\L^d(E)}=h(E).\]
\end{definition} \noindent 
Finally, we state an important property of Cheeger sets, which will be a crucial ingredient for the application of Proposition \ref{Prop:weakAlFINAL} to the proof of the Theorem \ref{thm:mainThmCONTACT}. For the sake of completeness, we give its proof in the Appendix.
\begin{lemma}\label{lem:tecnico}
Let $E\subset \Omega$ be a Cheeger set of $\Omega$ with $\H^{d-1}(\partial^* E\cap \Omega)>0$. Let $\Sigma\subset \partial E$ be a closed set with $\H^{d-1}( (\partial^* E\cap \Omega) \setminus \Sigma)>0$. Then there exists a constant $C_0=C_0(\Sigma)>0$ and $r_0=r_0(\Sigma)>0$ such that
    \[
    P(E;B_r(x))\leq C_0 r^{d-1} \ \ \ \text{for all $x\in \Sigma$, $r<r_0$}.
    \]
\end{lemma}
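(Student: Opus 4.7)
The strategy is the classical density estimate for almost-minimizers of the perimeter, based on the truncated competitor $F_r := E \setminus \overline{B}_r(x)$ for $x \in \Sigma$ and small $r$. Since $F_r \subset E \subset \Omega$, the Cheeger minimality gives $P(F_r) \geq h(\Omega)\, \L^d(F_r)$, and combined with $P(E) = h(\Omega)\, \L^d(E)$ this will provide the almost-minimality inequality
\[
P(F_r) - P(E) \geq - h(\Omega)\, \L^d(E \cap B_r(x)).
\]
A standard splitting of the perimeter across $\partial B_r(x)$ will then allow to bound $P(E;B_r(x))$ by the surface area of the sphere, yielding the desired $(d-1)$-dimensional scaling.

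The only delicate point, and the main obstacle, is to ensure that $F_r$ keeps positive Lebesgue measure uniformly in $x \in \Sigma$; otherwise the Cheeger inequality would be vacuous. This is exactly where the hypothesis $\H^{d-1}((\partial^* E \cap \Omega) \setminus \Sigma) > 0$ intervenes. First I would fix a point $y \in (\partial^* E \cap \Omega) \setminus \Sigma$, whose existence is granted by the assumption; since $\Sigma$ is closed, $\rho := \dist(y,\Sigma) > 0$. By De Giorgi's structure theorem, $y$ has density $1/2$ in $E$, so $V_0 := \L^d(E \cap B_{\rho/4}(y)) > 0$. Setting $r_0 := \rho/2$, for every $x \in \Sigma$ and $r < r_0$ the balls $B_r(x)$ and $B_{\rho/4}(y)$ are disjoint, hence $\L^d(F_r) \geq V_0 > 0$ uniformly.

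Then the computation proceeds as follows. For almost every $r \in (0,r_0)$ one has $|D\ca_E|(\partial B_r(x)) = 0$, so that
\begin{align*}
P(F_r) &= P(E; \R^d \setminus \overline{B}_r(x)) + \H^{d-1}(E^{(1)} \cap \partial B_r(x)), \\
P(E)   &= P(E; \R^d \setminus \overline{B}_r(x)) + P(E; \overline{B}_r(x)).
\end{align*}
Subtracting and combining with the almost-minimality inequality above yields
\[
P(E; \overline{B}_r(x)) \leq \H^{d-1}(E^{(1)} \cap \partial B_r(x)) + h(\Omega)\, \L^d(E \cap B_r(x)) \leq d\, \omega_d\, r^{d-1} + h(\Omega)\, \omega_d\, r^{d}.
\]
For $r < r_0$ this already gives $P(E; B_r(x)) \leq C_0\, r^{d-1}$ with $C_0 := d\,\omega_d + h(\Omega)\,\omega_d\, r_0$, a constant depending only on $d$, $h(\Omega)$ and the fixed point $y$, hence on $\Sigma$. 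The extension from almost every $r$ to every $r \in (0, r_0)$ is routine, since $r \mapsto P(E; B_r(x))$ is non-decreasing and right-continuous in $r$, so that the bound on a dense set of radii propagates to all of $(0,r_0)$.
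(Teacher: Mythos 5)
Your proof is correct, but it takes a genuinely different route from the paper's. The paper does not work with the truncated set alone: since Cheeger minimality is phrased through the ratio $P/\L^d$, it restores the volume removed in $B_r(y)$ by means of the volume-fixing variation lemma (Lemma \ref{giulio}, i.e.\ Lemma 17.21 in \cite{maggiBOOK}), producing an equal-volume competitor that agrees with $E$ outside $B_r(y)\cup B_{\varrho_0}(x)$; this is exactly where the hypothesis $\H^{d-1}((\partial^* E\cap\Omega)\setminus\Sigma)>0$ is used, to find a ball $B_{\varrho_0}(x)\subset\subset\Omega\setminus\Sigma$ charged by $\partial^* E$ in which to perform the adjustment, and it is why the constants $C_0,r_0$ depend on $\Sigma$. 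You instead exploit the scaling structure directly: from $P(F_r)\geq h(\Omega)\L^d(F_r)$ and $P(E)=h(\Omega)\L^d(E)$ you obtain the inequality $P(F_r)-P(E)\geq -h(\Omega)\,\L^d(E\cap B_r(x))$, i.e.\ you use that a Cheeger set minimizes $F\mapsto P(F)-h(\Omega)\L^d(F)$ among subsets of $\Omega$, so that inward perturbations cost nothing and the standard splitting of the perimeter across $\partial B_r(x)$ gives the $(d-1)$-dimensional bound. This is both more elementary and slightly stronger than the paper's argument: the hypothesis $\H^{d-1}((\partial^* E\cap\Omega)\setminus\Sigma)>0$ enters your proof only to guarantee $\L^d(F_r)>0$, and even that can be obtained more cheaply by simply requiring $\omega_d r_0^d<\L^d(E)$, so your constant ends up depending only on $d$, $h(\Omega)$ and $r_0$, not on $\Sigma$. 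Two cosmetic points that do not affect validity: $r\mapsto P(E;B_r(x))$ is left- rather than right-continuous (right-continuity holds for closed balls), but monotonicity plus the density of good radii is all you actually need, exactly as you use it; and the two perimeter decompositions hold for the a.e.\ $r$ with $\H^{d-1}(\partial^* E\cap\partial B_r(x))=0$, which is the correct qualification of your "for almost every $r$".
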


    \subsection{Cheeger problem as an obstacle problem} \label{sbsbsct:graph}
    Let $\Omega$ be an open bounded set with $\partial \Omega\in C^{k,\alpha}$ for some $k\geq 1$, $\alpha\in (0,1)$. Let $E\subset \Omega$ be one of its Cheeger sets. By statement (II) of Theorem \ref{thm:reg} for any $x\in \partial^* E\cap \Omega$ there exists a graph representation $\nu\in \mathbb{S}^{d-1}, r,R>0$, $f_E:Q^{\nu}_r\rightarrow (-R,R)$ with $f_E\in C^{\infty}(Q_r^{\nu})$ analytic and solving the constant mean curvature equation
        \[
        -\dive\left(\frac{\nabla f_E(x)}{\sqrt{1+|\nabla f_E(x)|^2}}\right) = h(\Omega) \ \ \text{for all $x\in Q^{\nu}_r$}.
        \]
    If we pick a point $x\in \partial E\cap \partial \Omega$, by assertions (III)-(IV)-(V) of Theorem \ref{thm:reg} we know that $\nu_E(x)=\nu_{\Omega}(x)$, and we can find graph representations $f_E\in C^{1}(Q^{\nu}_r)$, $f_{\Omega}\in C^{k,\alpha}(Q^{\nu}_r)$ such that, for some $R>0$,
    \begin{align*}
    (E-x)\cap  D^{\nu}_r(R)&=\{ z\in D^{\nu}_r(R)  \ | \ z\cdot e_d \leq f_E(\mathbf{p}_{\nu}(z)) \},\\
(\Omega-x) \cap  D^{\nu}_r(R)&=\{ x\in  D^{\nu}_r(R) \ | \ z\cdot e_d \leq f_\Omega(\mathbf{p}_{\nu}(x)) \},\\
(\partial E-x)\cap   D^{\nu}_r(R) &=\{ y+f_E(y)\nu \ | \ y\in Q^{\nu}_{r}  \},\\
(\partial \Omega-x) \cap  D^{\nu}_r(R) &=\{ y+f_\Omega(y)\nu \ | \ y\in Q^{\nu}_{r}  \}.
    \end{align*}
The isoperimetric properties of $E$ imply that $f_E$ is the solution to the following obstacle problem
\begin{equation}\label{eqn:OBS}
\begin{aligned}
    \int_{Q_r}& (\sqrt{1+|\nabla f_E|^2} - h(\Omega)f_E) \d x=\\
    &\inf \left\{  \int_{Q^{\nu}_r} (\sqrt{1+|\nabla w|^2} - h(\Omega)w)\d x\, \, \bigg| \, \,  w\in H^1(Q^{\nu}_r), \, 
    w=f_E \ \text{on $\partial Q^{\nu}_r$},\, 
    w\leq f_{\Omega}  \, \text{on $Q^{\nu}_r$ }     \right\}. \end{aligned} \end{equation} \noindent This implies that, if we define
 \[\Gamma_{r,R}:=(\partial E-x)\cap (\partial \Omega-x) \cap  D^{\nu}_r(R), \quad \quad
    \gamma_{r,R}:=\{y\in Q^{\nu}_r \ | \ y+f_E(y)\nu \in \Gamma_{r,R}\}=\mathbf{p}_{\nu}(\Gamma_{r,R}),
    \] \noindent then $f_E$ satisfies the following properties
    \begin{equation}\label{upper}
    \left\{
    \begin{array}{rr}
    -\dive\left(\frac{\nabla f_E(x)}{\sqrt{1+|\nabla f_E(x)|^2}}\right)= h(\Omega) \ \ &  \ \ \text{for all $x\in Q^{\nu}_r\setminus \gamma_{r,R}$,}\\
      -\dive\left(\frac{\nabla f_E}{\sqrt{1+|\nabla f_E|^2}}\right)\leq  h(\Omega) \ \ &  \ \ \text{weakly on  $Q^{\nu}_r$,}\\
    f_E(x) < f_{\Omega}(x) \ \ &  \ \ \text{for all $x\in Q^{\nu}_r\setminus \gamma_{r,R}$,}\\
    f_E(x)=f_\Omega(x) \ \ &  \ \ \text{for all $x\in \gamma_{r,R}$.}\\
    \end{array}
    \right.
    \end{equation} 

\subsection{Campanato spaces and regularity theory tools}\label{sbs:REG:CAM}
Given an open bounded domain $A \subset \mathbb{R}^{d}$, a function $u \in L^2(A)$ belongs to the \textit{Campanato space} $\mathcal{L}^{2,\lambda}(A)$ if there exist $C,\tilde{\varrho} >0$ such that for each $x \in A$ and $\varrho \leq \tilde{\varrho}$
\begin{equation} \label{campanato-estimate}
\int_{Q_{\varrho(x)} \cap A} | u-u_{Q_{\varrho}(x) \cap A}|^2 \d y \leq C \varrho^{\lambda},
\end{equation} 
where as usual
\begin{equation} \label{average}
u_{D}= \fint_{D} u \, \d x = \frac{1}{\L^d(D)} \int_{D} u \, \d x.
\end{equation} 
The celebrated results of Campanato in \cite{Campa} give the existence of an isomorphism between $\mathcal{L}^{2,\lambda}$ and $ C^{0,\alpha}$ for $\alpha=\frac{\lambda-d}{2}$. Notably this means that  \[
u \in \mathcal{L}^{2,2\alpha+d} \Rightarrow u\in C^{0,\alpha}.\]
We will use the power of this isomorphism together the following important technical Lemma (\cite{AmbrosioCarlottoMassaccesi}), in order to determine an improvement of regularity in Section \ref{sct:removable}. 

\begin{lemma}\label{magiclemma}
Let $\phi$ be a non decreasing, positive function satisfying for $\sigma,A,B,\varrho_* \ge 0$, $b>a>d$,
\begin{align}
\phi(\varrho_1) &\leq A   \bigg( \frac{\varrho_1}{\varrho_2} \bigg)^{b}  \phi(\varrho_2) + B \varrho_2^{a},\quad \quad \ &\text{whenever $0< \varrho_1 \leq \varrho_2 \leq \varrho_*$}, \label{MagicLemma}\\
\phi(\varrho)&\leq \sigma \varrho^{d}, \ \ \ \ &\text{for all $\varrho\leq \varrho_*$}.\label{MagicLemma2}
\end{align} 
Then the following estimate holds 
\[
\phi(\varrho) \leq C \varrho^{a} \ \ \ \text{for all $\varrho \leq \varrho_*$}
\] 
for a constant $C=C(A,B,\tau,a,b,d,\varrho_*,\sigma)$ but independent of $\phi$.
\end{lemma}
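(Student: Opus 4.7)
The plan is to follow the classical hole-filling / iteration scheme in the spirit of Giaquinta--Martinazzi. The key observation is that since $b > a$, for any prescribed multiplicative factor we can absorb the constant $A$ in front of the $(\varrho_1/\varrho_2)^b$ term by paying a small rescaling. Concretely, I would fix once and for all a parameter $\tau \in (0,1)$ so small that $A\tau^{b-a} \leq \tfrac12$; this is possible because $b-a>0$ implies $\tau^{b-a}\to 0$ as $\tau\to 0$. Applying \eqref{MagicLemma} with $\varrho_2 = \varrho$ and $\varrho_1 = \tau\varrho$ (both admissible for $\varrho \leq \varrho_*$) then gives the one-step contraction
\[
\phi(\tau\varrho) \;\leq\; A\tau^b \phi(\varrho) + B\varrho^a \;\leq\; \tfrac{1}{2}\tau^{a}\phi(\varrho) + B\varrho^{a},\qquad 0<\varrho\leq \varrho_*.
\]

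Next I would iterate this on the geometric sequence $\varrho_k := \tau^k \varrho_*$. Setting $\psi_k := \phi(\varrho_k)/\varrho_k^{a}$ and dividing the previous inequality (with $\varrho=\varrho_k$) by $\varrho_{k+1}^a = \tau^{a}\varrho_k^{a}$, the recursion reduces to the scalar estimate
\[
\psi_{k+1} \;\leq\; \tfrac12\, \psi_k + B\tau^{-a}.
\]
A routine induction then yields $\psi_k \leq \psi_0 + 2B\tau^{-a}$ for every $k\geq 0$. The role of hypothesis \eqref{MagicLemma2} is exactly to furnish the starting bound $\psi_0 = \phi(\varrho_*)/\varrho_*^a \leq \sigma \varrho_*^{d-a}$, which is finite since $d>a$; without \eqref{MagicLemma2}, the two-point inequality \eqref{MagicLemma} would not by itself control $\phi$ at any single scale, so this initialisation step is really essential (not merely technical). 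Combining the two we obtain
\[
\phi(\varrho_k) \;\leq\; M\,\varrho_k^{a},\qquad M := \sigma\varrho_*^{d-a} + 2B\tau^{-a}.
\]

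Finally, for an arbitrary $\varrho\in(0,\varrho_*]$ I would pick the unique integer $k\geq 0$ with $\varrho_{k+1} < \varrho \leq \varrho_k$ and exploit the monotonicity of $\phi$ together with $\varrho_k = \tau^{-1}\varrho_{k+1} \leq \tau^{-1}\varrho$ to conclude
\[
\phi(\varrho) \;\leq\; \phi(\varrho_k) \;\leq\; M\,\varrho_k^{a} \;\leq\; M\tau^{-a}\,\varrho^{a},
\]
which gives the stated bound with $C:=M\tau^{-a}$, depending only on $A,B,\tau,a,b,d,\varrho_*,\sigma$ and not on $\phi$ itself. There is no genuine obstacle in the argument; the only care is in the initial choice of $\tau$, which must be calibrated against $A$, $a$ and $b$ so that $A\tau^{b-a}\leq \tfrac12$ and the scalar recursion becomes a strict geometric contraction. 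Once that choice is made, the proof is purely algebraic.
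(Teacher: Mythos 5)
Your proof is correct, and it is essentially the standard iteration argument behind this lemma; note that the paper does not prove it at all but simply cites it from the reference given there, so there is no internal proof to diverge from. One small slip: you justify the finiteness of $\psi_0\leq\sigma\varrho_*^{d-a}$ by saying ``since $d>a$'', whereas the lemma assumes $a>d$; the point is immaterial because $\varrho_*>0$ is fixed, so $\sigma\varrho_*^{d-a}$ is a finite constant depending only on $\sigma,\varrho_*,a,d$ regardless of the sign of $d-a$, and your constant $C$ remains independent of $\phi$ as required.
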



\section{Statement of the main results}\label{Sct:Main}
In this section we introduce our main results concerning  the contact surface of Cheeger sets, in terms of the regularity of $\partial \Omega$. 
\begin{theorem}[Contact points of Cheeger sets]\label{thm:mainThmCONTACT}
Let $\alpha\in [0,1]$ and $\Omega\subset \R^d$ be an open bounded set with $\partial \Omega\in C^{1,\alpha}$. Then every Cheeger set $E$ of $\Omega$ has boundary regularity of class $C^{1,\alpha}$ in a neighbourhood of any $x\in \partial E\cap\partial \Omega$ and
   \begin{equation}\label{eq:estMeasure}
        \H^{d-2+\alpha}(\partial E\cap \partial \Omega)>0.
   \end{equation}
As a consequence
   \begin{equation}\label{eq:estDim}
d-2+\alpha \leq \dim_{\H}(\partial E\cap \partial \Omega)\leq d-1.
\end{equation}
Furthermore, if $\alpha=0$ and $\Omega$ is an open bounded set with $\partial \Omega \in C^1$ then for any $E$ Cheeger set of $\Omega$ we have additionally
    \[
    \H^{d-2}(\partial E\cap \partial \Omega)=+\infty.
    \]
\end{theorem}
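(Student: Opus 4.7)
The proof proceeds by contradiction: I assume $\H^{d-2+\alpha}(\partial E \cap \partial\Omega) = 0$ and drive $E$ to be a single ball, then refute that via the Cheeger identity. To set the stage, I first upgrade the known $C^{1}$-regularity of $\partial E$ at contact points (Theorem \ref{thm:reg} (IV)) to $C^{1,\alpha}$, using the obstacle-problem characterization (\ref{upper}) of the local graph $f_E$ together with the regularity techniques for obstacle problems collected in Lemma \ref{lem:impReg}. This already delivers the first assertion of the theorem. By Proposition \ref{rmk:ind} I may also assume $E$ indecomposable, which will be convenient at the end.

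Write $\Gamma := \partial E \cap \partial\Omega$. For each $x \in \Gamma$ the graph representation of Subsection \ref{sbsbsct:graph} gives a cube $Q^{\nu}_{r}$ on which $f_E \in C^{1,\alpha}(Q^{\nu}_{r})$ solves the constant mean curvature equation on $Q^{\nu}_{r} \setminus \gamma_{r,R}$, with $\gamma_{r,R} = \mathbf{p}_{\nu}(\Gamma \cap D^{\nu}_{r}(R))$. Since $y \mapsto y + f_E(y)\nu$ is bi-Lipschitz, $\gamma_{r,R}$ is a Lipschitz image of a subset of $\Gamma$ and therefore inherits $\H^{d'-1+\alpha}(\gamma_{r,R}) = 0$ with $d' = d-1$. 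This measure vanishing is strictly weaker than the zero $1$-capacity threshold demanded by classical removability theorems, and that is exactly why the H\"older-tuned removability result of Section \ref{sct:removable} (Corollary \ref{cor:removability}, built upon the divergence-level Proposition \ref{propo:weakRem}) is essential: it lets us extend the CMC equation across $\gamma_{r,R}$, and hence across $\Gamma$ after pulling the extension back to $\R^{d}$. This removability step is the main obstacle of the whole argument.

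Combining this local extension with the analyticity of $\partial^{*}E \cap \Omega$ (Theorem \ref{thm:reg} (II)), with the dimensional bound $\dim_{\H}\Sigma \le d-8$ on the interior singular set, and with the perimeter-density estimate of Lemma \ref{lem:tecnico} to rule out boundary contributions supported on $\Gamma \cup \Sigma$, one obtains that $E$ satisfies a global distributional constant mean curvature identity on $\R^{d}$ with constant $H = h(\Omega)$. The refined Alexandrov Theorem \ref{thm:AlexRef} then forces $E$ to be a finite union of essentially disjoint balls of radius $R = (d-1)/h(\Omega)$, and indecomposability together with Remark \ref{rmk:Indonecomp} narrows this down to $E = B_{R}(x_{0})$. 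The Cheeger identity now gives $h(\Omega) = P(E)/\L^{d}(E) = d/R$, so $R\, h(\Omega) = d$; combined with $R\, h(\Omega) = d-1$ this forces $d = d-1$, absurd. Hence $\H^{d-2+\alpha}(\partial E \cap \partial\Omega) > 0$, and the upper bound $\dim_{\H}(\partial E \cap \partial\Omega) \le d-1$ is immediate from $\partial E \cap \partial\Omega \subset \partial\Omega$, the latter being a $C^{1}$ hyper-surface.

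The sharper statement in the $C^{1}$ case ($\alpha = 0$) is handled by exactly the same strategy, now assuming towards contradiction that $\H^{d-2}(\partial E \cap \partial\Omega) < +\infty$. The projected sets then satisfy $\H^{d'-1}(\gamma_{r,R}) < +\infty$, which is precisely the hypothesis under which the $\alpha = 0$ variant of Proposition \ref{propo:weakRem}, in the spirit of Ponce \cite{ponce2013singularities}, applies to the $C^{1}$ solution $f_{E}$. Removability, global CMC, Alexandrov and indecomposable ball reproduce the identity $d = d-1$, so $\H^{d-2}(\partial E \cap \partial\Omega)$ must be infinite.
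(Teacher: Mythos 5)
Your proposal is correct and follows essentially the same route as the paper: contradiction, $C^{1,\alpha}$ upgrade at contact points via the obstacle problem (Lemma \ref{lem:impReg}), H\"older-tuned removability of the projected contact set (Proposition \ref{propo:weakRem}/Corollary \ref{cor:removability}, i.e.\ Theorem \ref{mainthm:local}), removal of the interior singular set $\Sigma$ through the density estimate of Lemma \ref{lem:tecnico} (this is exactly Proposition \ref{Prop:weakAlFINAL}), Alexandrov's theorem, reduction to a single ball and the Cheeger identity $h(\Omega)=d/R$ versus $R=(d-1)/h(\Omega)$, with the $\alpha=0$ case run under the finiteness assumption via the Ponce-type statement. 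The only points to tidy up are that the density/cutoff mechanism serves solely to remove $\Sigma$ (it could not handle $\Gamma$ when $\alpha>0$, since $\H^{d-2+\alpha}(\Gamma)=0$ does not control $\H^{d-2}(\Gamma)$), and that before invoking Lemma \ref{lem:tecnico} one should check, as the paper does, that $\H^{d-1}(\partial^* E\cap\Omega)>0$.
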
 \noindent 
In dimension $d=2$, we are able to provide the following examples saturating the estimates \eqref{eq:estMeasure}, \eqref{eq:estDim}.

\begin{theorem}[Sharpness of the dimensional estimate for $d=2$] \label{thm:sharpness}
There exists an open bounded set $\Omega\subset \R^2$ with Lipschitz boundary having a Cheeger set $E$ such that
    \[
    \H^0(\partial E\cap \partial \Omega)<+\infty.
    \]
Moreover, for any $\alpha\in (0,1)$ there exists an open bounded set $\Omega\subset \R^2$ with boundary regularity of class $C^{1,\alpha}$ having a Cheeger set $E$ such that
    \[
    0<\H^{\alpha}(\partial E\cap \partial \Omega)<+\infty , \ \ \ \ \  \dim_{\H} (\partial E\cap \partial \Omega)=\alpha.
    \]
\end{theorem}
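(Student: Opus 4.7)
For both parts, I would construct a planar set $E$ that is \emph{self-Cheeger} and has constant distributional mean curvature $h(E)$ on $\partial E\setminus\Gamma$ for a prescribed closed set $\Gamma\subset\partial E$ of the desired Hausdorff dimension, and then engineer an ambient $\Omega\supset E$ with the requested boundary regularity so that $\partial\Omega\cap\partial E=\Gamma$ and $h(\Omega)=h(E)$. Since $h(\Omega)\leq P(E)/\L^{2}(E)=h(E)$ is automatic from $E\subset\Omega$, it will suffice to verify the reverse inequality, which is precisely what makes $E$ a Cheeger set of $\Omega$ and realises the prescribed contact surface.

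The heart of the construction lies in a one-dimensional ODE. For a graph $y=f(x)$, the CMC equation reduces to $\phi'=-h$ in the variable $\phi(x):=f'(x)/\sqrt{1+f'(x)^{2}}$, whose classical solutions are circular arcs of radius $1/h$. For part (b) I would fix a self-similar Cantor-type set $\gamma\subset[0,1]$ with $\dim_{\H}(\gamma)=\alpha$ and $0<\H^{\alpha}(\gamma)<\infty$, and choose a continuous ``devil's staircase'' $\psi\in C^{0,\alpha}([0,1])$ that is constant on every complementary interval of $\gamma$ but non-constant on $\gamma$ (the H\"older exponent $\alpha$ being dictated by the Hausdorff dimension of $\gamma$). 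Setting $\phi:=-hx+\psi$, choosing parameters so that $|\phi|\leq 1/2$, and integrating $f'=\phi/\sqrt{1-\phi^{2}}$ produces $f\in C^{1,\alpha}([0,1])$ whose graph is a circular arc of radius $1/h$ on every complementary interval of $\gamma$, with tangent continuity but curvature jumps across $\gamma$. Closing the graph of $f$ with a suitable circular cap of the same radius yields a bounded simply connected domain $E$ with $\partial E\in C^{1,\alpha}$, with constant distributional mean curvature $h$ on $\partial E\setminus\Gamma$, where $\Gamma:=\{(x,f(x)) : x\in\gamma\}$ has Hausdorff dimension $\alpha$ by bi-Lipschitz invariance. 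Part (a) corresponds to the degenerate case $\gamma$ finite, where the analogous construction produces only a Lipschitz $\partial E$ and $\H^{0}(\Gamma)$ finite.

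Next I would build $\Omega$ by detaching $\partial\Omega$ from $\partial E$ on every complementary interval of $\gamma$ via small $C^{1,\alpha}$ (resp.\ Lipschitz) outward perturbations, calibrated so that the resulting $\partial\Omega$ has the required regularity and $\partial\Omega\cap\partial E=\Gamma$. The self-Cheeger property of $E$ I would obtain by a direct calibration: since $E$ is a small $C^{1,\alpha}$ perturbation of a disk of radius $1/h$, one can build a vector field $X$ on a neighbourhood of $E$ with $|X|\leq 1$, $\dive(X)\equiv h$ in $E$, and $X\cdot\nu_{E}=1$ on $\partial E$, which via the divergence theorem gives $P(F)\geq h\L^{2}(F)$ for every $F\subset E$. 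The main obstacle is the equality $h(\Omega)=h(E)$: ruling out that a competitor $F\subset\Omega$ protruding into $\Omega\setminus E$ can improve the isoperimetric ratio requires combining the self-Cheeger property of $E$ applied to $F\cap E$ with a quantitative thinness estimate on the bumps forming $\Omega\setminus E$, showing via an isoperimetric penalisation that any such protrusion adds more perimeter than volume. Tuning the heights and widths of the bumps to the H\"older regularity of $\partial\Omega$ closes the argument, and the sharpness assertions $\H^{0}(\Gamma)<\infty$ and $0<\H^{\alpha}(\Gamma)<\infty$ are built into the choice of $\gamma$.
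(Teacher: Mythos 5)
Your starting point coincides with the paper's: the profile obtained by integrating $f'=\phi/\sqrt{1-\phi^{2}}$ with $\phi$ a Cantor staircase minus a linear term is exactly the function $u_{\tau}$ of Section \ref{ref:pfExa}, and the identification of the contact set with the (bi-Lipschitz image of the) Cantor set is the same. The gaps appear in the two isoperimetric steps. First, your self-Cheeger argument via a calibration $X$ with $\dive X\equiv h$ in $E$ and $X\cdot\nu_{E}=1$ on $\partial E$ cannot work as stated: by the divergence theorem such a field exists only if $P(E)/\L^{2}(E)=h$, i.e.\ only if the size of $E$ is tuned so that the isoperimetric ratio equals the curvature of the arcs, and a boundary made of arcs of radius $1/h$ does not guarantee this (compare a Reuleaux-type set). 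You never arrange this compatibility, whereas in the paper it is the heart of the construction: the parameter $\ell_{0}$ (resp.\ $\varrho_{0}$) is chosen by an intermediate value argument so that $\L^{2}(E^{H^{-1}})=\pi H^{-2}$, and the Steiner formulas \eqref{steiner1}--\eqref{steiner2} then yield $\L^{2}(E)/P(E)=H^{-1}$; in the Lipschitz case this tuning is even obstructed for polygons with fewer than six vertices, a phenomenon your ``degenerate case'' remark misses. Moreover, even with the correct ratio, the existence of the interior tangent ball of radius $1/H$ at every boundary point (needed for Theorem \ref{giogian1}, or for any calibration) is nontrivial near the Cantor set and is precisely what Lemma \ref{lem:inequality} and Proposition \ref{propo:solutionCounter} establish; calling $E$ ``a small perturbation of a disk'' does not substitute for this.

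Second, the step $h(\Omega)=h(E)$ with $E$ a Cheeger set of $\Omega$ is left as a sketch, and the sketch as written would fail: for a competitor $F\subset\Omega$ protruding into the bumps, the decomposition into $F\cap E$ and $F\setminus E$ does not split the perimeter, since $P(F\cap E)$ contains portions of $\partial E\cap F^{(1)}$ that are not part of $P(F)$, so applying the self-Cheeger inequality to $F\cap E$ plus a ``thinness'' estimate on the bumps does not directly yield $P(F)\geq h\L^{2}(F)$. The paper avoids this cut-and-paste issue entirely by using the structure theorem for no-neck planar domains (Theorem \ref{giogian2}): it suffices to check that $\Omega_{\delta}$ has no necks of radius $H^{-1}$ and that $\Omega_{\delta}^{H^{-1}}=E^{H^{-1}}$ for $\delta$ small, which is where the angle bounds of Proposition \ref{prop:ontheangle} and Lemma \ref{forsecisifalemma} (all contact arcs span angles $<\pi$, most $\leq\pi/2$) enter; then $H^{-1}$ solves $\pi r^{2}=\L^{2}(\Omega_{\delta}^{r})$, uniqueness gives $h(\Omega_{\delta})=H$, and $E=\Omega_{\delta}^{H^{-1}}\oplus B_{1/H}$ is a Cheeger set. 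Either you import such a structural result, or you must make the penalisation argument for protruding competitors rigorous; as it stands this is a genuine missing piece rather than a routine verification.
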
 \noindent As a consequence of Theorem \ref{thm:mainThmCONTACT} we have that a contact set of full $\H^{d-1}$ measure can be ensured when $\partial \Omega$ has boundary of class $C^{1,1}$ or $\Omega$ is convex.
\begin{corollary}\label{cor:a1}
Let $\Omega\subset \R^d$ be a bounded open set with $\partial \Omega\in C^{1,1}$. Let $E\subset \Omega$ be one of its Cheeger sets. Then
    \[
    \H^{d-1}(\partial E\cap \partial \Omega)>0.
    \] \noindent The same conclusion is valid in case $\Omega$ is convex.
\end{corollary}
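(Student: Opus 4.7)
The first statement is immediate: applying Theorem \ref{thm:mainThmCONTACT} with exponent $\alpha = 1$ yields $\H^{d-1}(\partial E \cap \partial \Omega) = \H^{d-2+1}(\partial E \cap \partial \Omega) > 0$.

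The convex case requires one additional observation, since when $\Omega$ is only convex its boundary is merely Lipschitz and Theorem \ref{thm:mainThmCONTACT} does not apply directly. However, Theorem \ref{thm:reg}(VI) tells us that the (unique) Cheeger set $E$ of a convex $\Omega$ has boundary $\partial E$ of class $C^{1,1}$ in a neighborhood of every contact point. As pointed out in the remarks following Theorem \ref{thm:mainThmCONTACT}, the proof of that theorem is really driven by the regularity of $\partial E$ rather than of $\partial \Omega$: what one actually uses is a $C^{1,\alpha}$-graph representation $f_E$ of $\partial E$ satisfying the CMC equation off the projection of the contact set, to which the removability result Corollary \ref{cor:removability} is then applied.

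Accordingly, I reason by contradiction, assuming $\H^{d-1}(\partial E \cap \partial \Omega) = 0$. Replaying the proof of Theorem \ref{thm:mainThmCONTACT} with $\alpha = 1$ using the $C^{1,1}$-regularity of $\partial E$ at contact points, the removability step propagates the constant mean curvature equation for $f_E$ across the contact set. Hence $E$ is a set of finite perimeter with constant distributional mean curvature $h(\Omega)$ on the whole of $\R^d$. Theorem \ref{thm:AlexRef} then forces $E$ to be a finite union of essentially disjoint balls of radius $(d-1)/h(\Omega)$; by uniqueness of the Cheeger set in convex domains, $E$ is in fact a single ball $B$.

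The final contradiction is obtained by the same concluding step used in Theorem \ref{thm:mainThmCONTACT}: combining the convexity of $\Omega$, the inclusion $B \subset \Omega$, and the identity $h(\Omega) = d/R$ (with $R$ the radius of $B$), one deduces that $\Omega$ itself must be a ball and in fact equal to $B$, so $\partial B \cap \partial \Omega = \partial \Omega$ has full $\H^{d-1}$-measure \textemdash{} a contradiction. This last rigidity step, which relies essentially on the isoperimetric inequality together with $B$ being the (unique) Cheeger set of $\Omega$, is the only genuinely delicate part of the argument; everything else is a direct reapplication of the machinery already built for Theorem \ref{thm:mainThmCONTACT}.
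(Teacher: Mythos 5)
Your first paragraph is exactly the paper's argument (the corollary is stated there as a direct consequence of Theorem~\ref{thm:mainThmCONTACT} with $\alpha=1$), and your strategy for the convex case is also the intended one: use assertion (VI) of Theorem~\ref{thm:reg} to get $\partial E\in C^{1,1}$ near contact points and rerun the proof of Theorem~\ref{thm:mainThmCONTACT}, which indeed only uses the regularity of $\partial E$ (Lemma~\ref{lem:impReg} is the only place where $\partial\Omega$ enters, and (VI) replaces it). Your reduction to a single ball via uniqueness, in place of Proposition~\ref{rmk:ind}, is also fine.

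The problem is your closing step, which is neither ``the same concluding step used in Theorem~\ref{thm:mainThmCONTACT}'' nor justified as written. The paper's conclusion is purely arithmetic: Theorem~\ref{thm:AlexRef} forces the ball to have radius $R=\frac{d-1}{h(\Omega)}$, while $E$ being a Cheeger set gives $h(\Omega)=\frac{P(E)}{\L^d(E)}=\frac{d}{R}$, whence $h(\Omega)=\frac{d}{d-1}\,h(\Omega)$, a contradiction requiring no convexity and no rigidity. You already have both ingredients in hand ($R=(d-1)/h(\Omega)$ from Alexandrov and $h(\Omega)=d/R$), but instead of combining them you assert that $B\subset\Omega$, $\Omega$ convex and $h(\Omega)=d/R$ force $\Omega=B$. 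That rigidity claim is not proved anywhere and does not follow from a vague appeal to the isoperimetric inequality and uniqueness: the Cheeger constant is not strictly monotone under inclusion (the paper's own constructions in Section~\ref{ref:pfExa} produce $\Omega_\delta\supsetneq E$ with $h(\Omega_\delta)=h(E)$), so ruling out a strictly larger convex $\Omega$ with the same Cheeger ball would need a genuine additional argument --- and in any case it is an unnecessary detour. Replace your last paragraph with the paper's radius count and the proof closes; as written, the final step is a gap.
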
 \noindent

\section{Removable singularities: a result for the divergence operator}\label{sct:removable}

In this Section we study those singularities which are removable for distributional mean curvature operators. In particular we prove the following Theorem, that gives a precise criterion to extend the constant distributional mean curvature in those sets whose Hausdorff measure is small enough.
\begin{theorem}\label{mainthm:local}
Let $E$ be a set of finite perimeter. Let $U$ be an open set and suppose that $\partial E\in C^{1,\alpha}(U)$ for some $\alpha\in [0,1]$. Suppose also that there exists a constant $H\in \R_+$ such that
    \[
    H_E=H \ \ \ \text{distributionally on $ U\setminus \Gamma$}
    \]
according to Definition \ref{def:CMC}, where $\Gamma\subset \R^d$ is a closed set satisfying
\begin{equation}
        \left\{ \begin{array}{ll}
         \displaystyle   \H^{d-2+\alpha}(\Gamma)=0  &  \text{if $\alpha>0$}\\
         \text{}\\
           \displaystyle   \H^{d-2}(\Gamma)<+\infty   & \text{if $\alpha=0$}.
        \end{array}
        \right.
    \end{equation}
 Then  $\partial E\cap U$ is an analytic hyper-surface with constant mean curvature equal to $H$.
\end{theorem}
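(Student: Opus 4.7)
The plan is to reduce the theorem to a local graph statement, apply a removable-singularities result for the $C^{0,\alpha}$ divergence operator in order to push the constant mean curvature equation across $\Gamma$, and then close the argument using the classical analyticity theory for weak solutions of the constant mean curvature equation recalled in Subsection \ref{sbsb:REGCMC}.

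First I would localize. Fix $x_{0}\in \Gamma\cap U$; by the assumption $\partial E\in C^{1,\alpha}(U)$ there exist $\nu\in \mathbb{S}^{d-1}$, $r,R>0$ and $f_{E}\in C^{1,\alpha}(Q_{r}^{\nu})$ providing a graph representation of $\partial E-x_{0}$ inside $D_{r}^{\nu}(R)\subset U-x_{0}$. As recalled in Subsection \ref{sbsbsct:graph}, in these local coordinates the distributional condition $H_{E}=H$ on $U\setminus \Gamma$ becomes
\begin{equation*}
-\dive\!\left(\frac{\nabla f_{E}}{\sqrt{1+|\nabla f_{E}|^{2}}}\right)=H\qquad \text{weakly on }\; Q_{r}^{\nu}\setminus \gamma,
\end{equation*}
where $\gamma:=\mathbf{p}_{\nu}(\Gamma\cap (D_{r}^{\nu}(R)+x_{0}))$ is a closed subset of $Q_{r}^{\nu}\subset \R^{d-1}$.

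Next I would remove the singular set. Set $F:=\nabla f_{E}/\sqrt{1+|\nabla f_{E}|^{2}}$. Since $p\mapsto p/\sqrt{1+|p|^{2}}$ is smooth and $\nabla f_{E}\in C^{0,\alpha}(Q_{r}^{\nu})$, the vector field $F$ lies in $C^{0,\alpha}(Q_{r}^{\nu})$ with $\|F\|_{\infty}\leq 1$. Because the projection $\mathbf{p}_{\nu}$ is $1$-Lipschitz,
\begin{equation*}
\H^{(d-1)-1+\alpha}(\gamma)\leq \H^{d-2+\alpha}(\Gamma)=0\qquad \text{if }\alpha>0,
\end{equation*}
and analogously $\H^{d-2}(\gamma)\leq \H^{d-2}(\Gamma)<+\infty$ when $\alpha=0$. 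These are exactly the hypotheses needed to invoke the divergence removability Proposition \ref{propo:weakRem} (respectively, in its $\alpha=0$ counterpart from \cite{ponce2013singularities}), with continuous right-hand side $g\equiv H$. I conclude
\begin{equation*}
-\dive(F)=H\qquad \text{weakly on }\; Q_{r}^{\nu},
\end{equation*}
so that $H_{E}=H$ distributionally on the full cylinder $D_{r}^{\nu}(R)+x_{0}$.

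With the singularity removed, $f_{E}$ is an $H^{1}$-weak solution of the CMC equation with constant right-hand side $H$ on all of $Q_{r}^{\nu}$; by the regularity result recalled in Section \ref{sbsb:REGCMC}, $f_{E}$ is analytic, and hence $\partial E$ is an analytic hypersurface of constant mean curvature $H$ in a neighborhood of $x_{0}$. Covering $\Gamma\cap U$ by such cylinders and combining with the analyticity of $\partial E\cap (U\setminus \Gamma)$ already guaranteed by hypothesis, one obtains that $\partial E\cap U$ is an analytic hypersurface of constant mean curvature $H$. The main technical obstacle is the removability Proposition \ref{propo:weakRem} itself: given $\varphi\in C^{\infty}_{c}(Q_{r}^{\nu})$, the Pokrovskii--Ponce scheme consists in using Proposition \ref{propo:HausMesChar} to cover $\gamma$ by a dyadic grid $\{Q_{k}\}$ of edge $2^{-j}$ with $\sum_{k}2^{-j((d-1)-1+\alpha)}\leq 2\varepsilon$, multiplying $\varphi$ by a smooth cutoff vanishing on the cover, and estimating the error arising from the cutoff by pairing the $C^{0,\alpha}$-oscillation of $F$ of size $(2^{-j})^{\alpha}$ against the surface area $(2^{-j})^{d-2}$ of $\partial Q_{k}$; the total error is of order $\varepsilon\,\|\varphi\|_{\infty}\|F\|_{C^{0,\alpha}}$ and vanishes as $\varepsilon\to 0$. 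All remaining steps are standard once this removability is in place.
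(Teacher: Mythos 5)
Your proposal is correct and follows essentially the same route as the paper's proof: localize $\partial E$ as a $C^{1,\alpha}$ graph, transfer the hypothesis $\H^{d-2+\alpha}(\Gamma)=0$ (resp.\ $\H^{d-2}(\Gamma)<+\infty$) to the planar singular set, invoke the divergence removability of Proposition \ref{propo:weakRem} (i.e.\ Corollary \ref{cor:removability}) with $g\equiv H$, and conclude analyticity from Subsection \ref{sbsb:REGCMC} before covering $\Gamma\cap U$. The only cosmetic difference is that you bound the measure of the projected set $\gamma$ using that $\mathbf{p}_{\nu}$ is $1$-Lipschitz, whereas the paper phrases the same estimate via the Lipschitz inverse of the graph map $F(y)=y+f_E(y)\nu$; both are equivalent and your choice of removing the projection of all of $\Gamma$ in the cylinder even streamlines the verification that the weak equation holds off $\gamma$.
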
 \noindent 
As already explained in the introduction, Theorem \ref{mainthm:local} can be derived by invoking the results in \cite{pokrovskii2009removableSurvey}. We give here an alternative proof, for a more general operator. Indeed, we extend a work of Ponce (\cite{ponce2013singularities}) originally developed for the special case $\alpha=0$. The key idea is that removability on small sets is a property of operators in divergence form. 

\begin{remark}
{\rm The existence result stated in Theorem \ref{thm:sharpness} for $\alpha\in (0,1)$ provides also a partial proof of the sharpness for Theorem \ref{mainthm:local} in $d=2$. Indeed the Cheeger set $E$ of $\Omega$ will have enough regularity and 
    \[
    H_E=h(\Omega) \ \ \ \text{on $\R^d\setminus \Gamma$}
    \]
where $\Gamma=\partial E\cap \partial \Omega$ satisfies $0<\H^{\alpha}(\Gamma)<+\infty$. In particular $\partial E$ is not analytic}.
\end{remark} \noindent 
\begin{proposition}[Theorem 1.1, \cite{ponce2013singularities}]\label{prop:removability}
Let $A\subset \R^d$ be an open bounded set, $\gamma\subset A$ a closed set with $\H^{d-1}(\gamma)<+\infty $ and $F\in C^{0}(A)$ such that
    \[
    -\dive(F)=g \ \ \ \text{weakly on $A\setminus \gamma$},
    \]
with $g\in C(\R^d)$ a continuous function. Then
    \[
    -\dive(F)=g\ \ \ \text{weakly on $A$}.
    \]
\end{proposition}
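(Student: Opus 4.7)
The plan is to test the weak equation on $A\setminus\gamma$ against $\varphi\eta_{\delta}$, where $\eta_\delta$ is a smooth cutoff that vanishes in a neighbourhood of $\gamma$, and then to let $\delta\to 0$. Fix $\varphi\in C^{\infty}_c(A)$; intersecting $\gamma$ with the compact set $\mathrm{supp}(\varphi)$ I may assume $\gamma$ is compact and $F$ is uniformly bounded on a neighbourhood $U$ of $\mathrm{supp}(\varphi)$. For $\delta>0$ small I will construct $\eta_{\delta}\in C^{\infty}(A)$ with $0\leq \eta_{\delta}\leq 1$, vanishing in a neighbourhood of $\gamma$ and identically one outside a small enlargement of $\gamma$. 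Since $\varphi\eta_\delta\in C^{\infty}_c(A\setminus\gamma)$, the hypothesis gives
\[
\int_A \eta_\delta\, F\cdot \nabla\varphi \,\d x + \int_A \varphi\, F\cdot \nabla\eta_\delta\,\d x = \int_A g\varphi\eta_\delta\,\d x.
\]
As $\H^{d-1}(\gamma)<+\infty$ forces $\L^d(\gamma)=0$, we have $\eta_\delta\to 1$ a.e., and dominated convergence sends the first and third integrals respectively to $\int F\cdot\nabla\varphi\,\d x$ and $\int g\varphi\,\d x$. The conclusion therefore reduces to showing that the cross term
\[
I_\delta := \int_A \varphi\, F\cdot \nabla\eta_\delta\,\d x
\]
tends to zero as $\delta\to 0$.

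The construction of $\eta_\delta$ is dictated by the geometry of $\gamma$. Using $\H^{d-1}(\gamma)<+\infty$ and the very definition of the Hausdorff measure, for every $\delta>0$ I pick a finite family of balls $\{B_{r_j}(x_j)\}_j$ with radii $r_j\leq \delta$ covering $\gamma$ and satisfying $\sum_j r_j^{d-1}\leq C_d\,\H^{d-1}(\gamma)+1$; a Besicovitch-type refinement yields an overlap bounded by a constant depending only on $d$. I attach to this cover radial bumps $\chi_j\in C^{\infty}_c(B_{2r_j}(x_j))$ with $\chi_j\equiv 1$ on $B_{r_j}(x_j)$ and $|\nabla\chi_j|\leq C/r_j$, and set $\eta_\delta(x) = \prod_j (1-\chi_j(x))$. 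Then $\eta_\delta$ vanishes on $\bigcup_j B_{r_j}(x_j)\supset \gamma$, equals one off $\bigcup_j B_{2r_j}(x_j)$, and satisfies $|\nabla\eta_\delta|\leq C/r_j$ inside each $B_{2r_j}(x_j)$.

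The hard part will be the estimate $I_\delta\to 0$, because the plain bound $|I_\delta|\leq \|\varphi\|_\infty\|F\|_{L^\infty(U)}\|\nabla\eta_\delta\|_{L^1}\leq C\sum_j r_j^{d-1}$ is only uniformly bounded by $\H^{d-1}(\gamma)$ and does not decay. This is where the continuity of $F$ must enter. On each ball I split $F = F(x_j) + R_j$ with $\|R_j\|_{L^\infty(B_{2r_j}(x_j))}\leq \omega_F(2\delta)$, $\omega_F$ being the modulus of continuity of $F$. The oscillation part contributes at most $C\|\varphi\|_\infty\,\omega_F(2\delta)\sum_j r_j^{d-1}\leq C\|\varphi\|_\infty\,\omega_F(2\delta)\,\H^{d-1}(\gamma)\to 0$. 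For the constant part I plan to use the cancellation identity $\int_{\R^d}\nabla\chi_j\,\d x=0$ (valid for any compactly supported $C^1$ function) together with the Taylor expansion $\varphi(x)=\varphi(x_j)+O(r_j)$ on $B_{2r_j}(x_j)$, which turns
\[
\left|\int \varphi\, F(x_j)\cdot \nabla\chi_j\,\d x\right| = \left|F(x_j)\cdot \int (\varphi(x)-\varphi(x_j))\nabla\chi_j\,\d x\right| \leq C\|F\|_{L^\infty(U)}\|\nabla\varphi\|_\infty\, r_j^{d};
\]
summing over $j$ yields $\sum_j r_j^d \leq \delta \sum_j r_j^{d-1} = O(\delta)\to 0$.

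The principal obstacle is precisely this last estimate: without the vanishing-mean cancellation $\int\nabla\chi_j=0$ the argument stalls at $O(\H^{d-1}(\gamma))$, and the finiteness of $\H^{d-1}(\gamma)$ alone would not suffice. It is the interplay between the continuity of $F$ (which trades a factor $r_j$ through its modulus of continuity) and the zero-mean cancellation on each bump (which supplies the remaining factor $r_j^{d-1}$) that makes the divergence operator tolerate closed sets of finite $(d-1)$-dimensional Hausdorff measure. A secondary technicality lies in organizing the bounded overlap of the cover via Besicovitch, so that expanding $\nabla\eta_\delta$ through the product rule does not blow up the constants; this, however, is purely combinatorial and standard in removable-singularity arguments.
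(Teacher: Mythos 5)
Your overall mechanism is the right one, and it is the same one driving the paper's proof of the H\"older analogue (Proposition \ref{propo:weakRem}): the modulus of continuity of $F$ supplies one small factor, a zero-mean/flux cancellation on each covering piece supplies the factor $r_j^{d-1}$, and the total $\sum_j r_j^{d-1}\lesssim \H^{d-1}(\gamma)$ stays bounded. The gap is in the implementation with the cutoff $\eta_\delta=\prod_j(1-\chi_j)$. The term produced by the ball $B_{2r_j}(x_j)$ is not $\int \varphi\, F\cdot\nabla\chi_j\,\d x$ but $-\int \varphi\, F\cdot\nabla\chi_j\,\prod_{k\ne j}(1-\chi_k)\,\d x$, and the extra factor is not constant on $B_{2r_j}(x_j)$ wherever neighbouring balls overlap; consequently $\int\nabla\chi_j\,\d x=0$ cannot be invoked for the part of $F$ frozen at $F(x_j)$. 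If you split off the overlap error, it is of size $\sum_j r_j^{-1}\L^d\bigl(B_{2r_j}(x_j)\cap\bigcup_{k\ne j}B_{2r_k}(x_k)\bigr)$, and for any cover of a genuinely $(d-1)$-dimensional set by balls of comparable radii the pairwise intersections have volume comparable to $r_j^d$, so this error is of order $\sum_j r_j^{d-1}\approx\H^{d-1}(\gamma)$ and does \emph{not} tend to zero; bounded overlap multiplicity (even if you had it) controls the number of balls through a point, not the relative volume of the intersections, so the Besicovitch remark does not repair this. Moreover, the claim that an efficient Hausdorff cover can be refined \`a la Besicovitch to have dimension-constant overlap while keeping $\sum_j r_j^{d-1}\lesssim\H^{d-1}(\gamma)+1$ is itself unjustified: Besicovitch requires balls centred on the set, and a Vitali-type $5r$ selection makes the cores disjoint but not the dilated balls you actually use for the cutoffs.

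The clean way to realize your cancellation is the one the paper uses for Proposition \ref{propo:weakRem} (and which adapts verbatim to the present statement, replacing the factor $2^{-j\alpha}$ by the modulus of continuity $\omega_F(2^{-j})$ and the bound $k_j2^{-j(d-1)}\le\varepsilon$ by $k_j2^{-j(d-1)}\lesssim\H^{d-1}(\gamma)$): cover $\gamma\cap\mathrm{spt}(\varphi)$ by finitely many \emph{essentially disjoint} dyadic cubes of edge $2^{-j}$ as in Proposition \ref{propo:HausMesChar}, split $\int_A F\cdot\nabla\varphi$ into the contribution of the union of the cubes (small because its Lebesgue measure is $O(k_j2^{-jd})$) and the complement, and express the latter through the flux $\int_{\partial Q}\varphi\,F\cdot\nu\,\d\H^{d-1}$ over each cube boundary. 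The cancellation you want then appears as the pairing of opposite faces of the same cube, which gains exactly $\|\nabla\varphi\|_\infty 2^{-j}+\omega_F(2^{-j})$ per face pair, and since the cubes tile space there is no overlap term at all. As written, your proof stalls at an error of order $\H^{d-1}(\gamma)$, so the key estimate $I_\delta\to0$ is not established.
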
 \noindent We extend Proposition \ref{prop:removability} to the case $\alpha>0$ in the next Proposition. 
\begin{proposition}\label{propo:weakRem}
Let $A\subset \R^d$ be an open bounded set, $\alpha\in [0,1]$, $\gamma\subset A$ a closed set with $\H^{d-1+\alpha}(\gamma)=0$ and $F\in C^{0,\alpha}(A)$ such that
    \[
    -\dive(F)=g \ \ \ \text{weakly on $A\setminus \gamma$,}
    \]
with $g\in C(\R^d)$ a continuous function. Then
    \[
     -\dive(F)=g \ \ \ \text{weakly on $A$ }.
    \]
\end{proposition}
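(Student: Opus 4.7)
Fix an arbitrary $\varphi\in C^\infty_c(A)$; the goal is to show $\int F\cdot\nabla\varphi\,\d x=\int g\varphi\,\d x$. The plan is to test the weak equation valid on $A\setminus\gamma$ against $\psi_\varepsilon:=(1-\chi_\varepsilon)\varphi$, where $\chi_\varepsilon$ is a smooth cutoff equal to $1$ on a thin open neighborhood of $\gamma$ (so that $\psi_\varepsilon\in C_c^\infty(A\setminus\gamma)$). Expanding the resulting identity and rearranging gives
\begin{equation*}
\int F\cdot\nabla\varphi\,\d x-\int g\varphi\,\d x=\int F\cdot\chi_\varepsilon\nabla\varphi\,\d x+\int F\,\varphi\cdot\nabla\chi_\varepsilon\,\d x-\int g\,\chi_\varepsilon\varphi\,\d x.
\end{equation*}
The first and third terms on the right will vanish as $\varepsilon\to 0$ by dominated convergence, since $0\le\chi_\varepsilon\le 1$, $\chi_\varepsilon\to 0$ a.e.\ (as $\H^d(\gamma)=0$) and the remaining factors are bounded; the whole difficulty is concentrated in the middle term.

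\textbf{Cutoff construction and main obstacle.} To design $\chi_\varepsilon$ I will invoke Proposition~\ref{propo:HausMesChar} with $s=d-1+\alpha$: since $\H^{d-1+\alpha}(\gamma)=0$, for every $\varepsilon>0$ it supplies a dyadic grid of mesh $r=2^{-j}$ and a subcollection $\mathcal{N}_\varepsilon$ of $N_\varepsilon$ cubes covering $\gamma$ with $N_\varepsilon r^{d-1+\alpha}\le 2\varepsilon$ (and $r\to 0$ as $\varepsilon\to 0$, by inspection of the proof). I then build $\chi_\varepsilon$ in the standard way with $\chi_\varepsilon\equiv 1$ on a neighborhood of $\bigcup_{Q\in\mathcal{N}_\varepsilon}Q$, $\mathrm{supp}(\chi_\varepsilon)\subset\bigcup_{Q\in\mathcal{N}_\varepsilon}2Q$ and $|\nabla\chi_\varepsilon|\le C/r$; this forces $|\mathrm{supp}(\chi_\varepsilon)|\le CN_\varepsilon r^d$ and $\|\nabla\chi_\varepsilon\|_{L^1}\le CN_\varepsilon r^{d-1}$. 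The main obstacle is visible immediately, as the naive $L^\infty$--$L^1$ pairing gives
\begin{equation*}
\Bigl|\int F\,\varphi\cdot\nabla\chi_\varepsilon\,\d x\Bigr|\le\|F\|_\infty\|\varphi\|_\infty\|\nabla\chi_\varepsilon\|_{L^1}\le C\,N_\varepsilon r^{d-1}=2C\varepsilon\,r^{-\alpha},
\end{equation*}
which does \emph{not} vanish as $\varepsilon\to 0$. The H\"older regularity of $F$ must therefore be exploited to recover the missing factor $r^\alpha$; this is the technical heart of the argument.

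\textbf{Closing the gap by mollification.} My plan is to smooth $F$ at the same scale $r$ as the cubes. Let $\omega_r$ be a standard mollifier of width $r$ and set $F_r:=F\ast\omega_r$. Since $\int\nabla\omega_r=0$ and $F\in C^{0,\alpha}$, one obtains the matched pair of estimates $\|F-F_r\|_\infty\le C[F]_{C^{0,\alpha}}r^\alpha$ and $\|\nabla F_r\|_\infty\le C[F]_{C^{0,\alpha}}r^{\alpha-1}$. Splitting
\begin{equation*}
\int F\,\varphi\cdot\nabla\chi_\varepsilon\,\d x=\int (F-F_r)\varphi\cdot\nabla\chi_\varepsilon\,\d x+\int F_r\,\varphi\cdot\nabla\chi_\varepsilon\,\d x,
\end{equation*}
the first piece is bounded by $\|F-F_r\|_\infty\|\varphi\|_\infty\|\nabla\chi_\varepsilon\|_{L^1}\le Cr^\alpha\cdot N_\varepsilon r^{d-1}=CN_\varepsilon r^{d-1+\alpha}\le 2C\varepsilon$. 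For the second piece $F_r$ is smooth, so integration by parts (legitimate since $F_r\,\varphi\,\chi_\varepsilon$ is compactly supported) moves the derivative off $\chi_\varepsilon$:
\begin{equation*}
\int F_r\,\varphi\cdot\nabla\chi_\varepsilon\,\d x=-\int\chi_\varepsilon(\dive F_r)\varphi\,\d x-\int\chi_\varepsilon\,F_r\cdot\nabla\varphi\,\d x.
\end{equation*}
Both resulting integrals live on $\mathrm{supp}(\chi_\varepsilon)$ and are controlled respectively by $\|\nabla F_r\|_\infty\|\varphi\|_\infty\,|\mathrm{supp}(\chi_\varepsilon)|\le C r^{\alpha-1}\cdot N_\varepsilon r^d=CN_\varepsilon r^{d-1+\alpha}\le 2C\varepsilon$, and by $\|F\|_\infty\|\nabla\varphi\|_\infty\,|\mathrm{supp}(\chi_\varepsilon)|\le CN_\varepsilon r^d\le 2C\varepsilon\,r^{1-\alpha}\le 2C\varepsilon$ (valid for $\alpha\le 1$ and $r\le 1$). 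Combining, the crucial integral is $O(\varepsilon)$, and letting $\varepsilon\to 0$ in the first display yields the desired identity. The algebraic miracle is that all three bounds collapse precisely onto $N_\varepsilon r^{d-1+\alpha}$, the one quantity that Proposition~\ref{propo:HausMesChar} controls.
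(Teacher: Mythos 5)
Your proof is correct, but it follows a genuinely different route from the paper's. The paper splits $\int_A F\cdot\nabla\varphi$ into the part over the union $\gamma_j$ of dyadic cubes covering $\gamma\cap\mathrm{spt}(\varphi)$ and its complement, uses the equation on the complement to produce a flux term $\int_{\partial\gamma_j}\varphi\,(F\cdot\nu)\,\d\H^{d-1}$, and gains the missing factor $2^{-j\alpha}$ by pairing opposite faces of each cube and invoking the H\"older modulus of $F$ across distance $2^{-j}$ (plus the Lipschitz modulus of $\varphi$). You instead keep genuinely admissible test functions $(1-\chi_\varepsilon)\varphi\in C^\infty_c(A\setminus\gamma)$, concentrate the error in $\int \varphi\,F\cdot\nabla\chi_\varepsilon$, and gain the same factor $r^\alpha$ by mollifying $F$ at the scale $r$ of the cover and integrating by parts in the smooth piece; both arguments ultimately rest on Proposition \ref{propo:HausMesChar} and on the same scaling identity $N_\varepsilon r^{d-1}\cdot r^\alpha=N_\varepsilon r^{d-1+\alpha}\le 2\varepsilon$. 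Your variant buys two things: you never need to apply a Gauss--Green formula on the cube complement for a merely continuous field with distributional divergence (a step the paper performs implicitly when it writes the boundary integral over $\partial\gamma_j$), and the estimates degrade gracefully to $\alpha=0$ without modification. A few small points to tidy up, none of them gaps: apply Proposition \ref{propo:HausMesChar} to the compact set $\gamma\cap\mathrm{spt}(\varphi)$ rather than to all of $\gamma$ (this is what guarantees a finite subfamily $\mathcal{N}_\varepsilon$, and it suffices for $(1-\chi_\varepsilon)\varphi$ to vanish near $\gamma$); require $r<\mathrm{dist}(\mathrm{spt}(\varphi),\partial A)$ so that $F_r=F\ast\omega_r$ is defined where it is used; and note that the first and third remainder terms are bounded directly by $C\,|\mathrm{supp}(\chi_\varepsilon)|\le C N_\varepsilon r^d\le C\varepsilon$, so no dominated convergence argument (and no discussion of a.e.\ convergence of $\chi_\varepsilon$) is actually needed.
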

\begin{proof}
Let $\varphi\in C^{\infty}_c(A)$ and let us write $K=\mathrm{spt}(\varphi)$, $\gamma'=K\cap \gamma$ which is compact and satisfies $\H^{d-1+\alpha}(\gamma')=0$. We fix $\varepsilon= \varepsilon(\varphi)>0$ and invoke Proposition \ref{propo:HausMesChar} to find $j\in \N$ such that $\mathcal{Q}_j:=\{Q_k^{ j }(x_k^{ j })\}_{k\in \N}$, the countable family of closed cubes centered at $\{x_k^{ j }\}_{k\in \N}$ and of edge $2^{-j}$ tasselling $\R^d$, satisfies
    \[
    \gamma'\subset \bigcup_{\substack{ Q_k^j \in \mathcal{Q}_j :\\  \gamma\cap Q_k^j\neq \emptyset }} Q_k^j, \ \ \   \sum_{\substack{ Q_k^j \in \mathcal{Q}_j :\\  \gamma'\cap Q_k^j\neq \emptyset } } 2^{-j(d-1+\alpha)} \leq  \varepsilon.
    \]
Since $\gamma'$ is compact we can find a finite number of cubes (relabelled) from the grid  $\{Q_1(x_1),\ldots,Q_{k_j}(x_{k_j})\}\in \mathcal{Q}_j $ such that
 \[
    \gamma'\subset \bigcup_{i=1}^{k_j} Q_i(x_i)\subset\subset Q , \ \ \ \   k_j 2^{-j(d-1+\alpha)} \leq  \varepsilon.
    \]
Let us write 
    \[
    \gamma_j:=   \bigcup_{i=1}^{k_j} Q_i(x_i).
    \]
Then we split the equation in two
\begin{equation} \label{split}
    \begin{aligned}
        \int_A F\cdot \nabla \varphi \d x = & \int_{A\setminus \gamma_j} F\cdot \nabla \varphi \d x +\int_{\gamma_j} F\cdot \nabla \varphi \d x \\
        =&\int_{A\setminus \gamma_j} g\varphi \d x+  \int_{\partial \gamma_j}  \varphi (F\cdot \nu) \d\H^{d-1}(x) +\int_{\gamma_j} F\cdot \nabla \varphi \d x
    \end{aligned} \end{equation}\noindent and we estimate the various terms. Last integral in \eqref{split} is controlled from above
    \begin{align*}
        \left|\int_{\gamma_j} (F\cdot \nabla \varphi -g\varphi )\d x\right|\leq \L^d(\gamma_j) (\|F\cdot \nabla \varphi\|_{\infty} +\|g\varphi\|_{\infty})\leq C k_j 2^{-jd} \leq C \varepsilon.
    \end{align*}
 In the following we will denote by $C=C(F,g,\varphi)$ a constant independent of $j,\varepsilon$ and that may vary from line to line. We estimate from above the second integral on the right of \eqref{split} with
 \begin{align*}
        \left|\int_{\partial \gamma_j}  \varphi (F\cdot \nu) \d\H^{d-1}(x)\right|&\leq \sum_{i=1}^{k_j}  \left|\int_{\partial Q_i(x_i) }  \varphi (F\cdot \nu) \d\H^{d-1}(x)\right|\\
         &\leq \sum_{i=1}^{k_j}  2^{-j(d-1)} \left|\int_{\partial Q }  \varphi(x_i+2^{-j}y) (F(x_i+2^{-j}y) \cdot \nu) \d\H^{d-1}(y)\right|.
    \end{align*}
Furthermore we write    
\begin{align*}
    \int_{\partial Q }&  \varphi(x_i+2^{-j}y) (F(x_i+2^{-j}y) \cdot \nu) \d\H^{d-1}(y)\\ 
    =&\sum_{\ell=1}^d\int_{(e_\ell^{\perp} \cap Q)+ \sfrac{e_{\ell}}{2} }  \varphi(x_i+2^{-j}y) (F(x_i+2^{-j}y) \cdot e_{\ell}) \d\H^{d-1}(y)\\
    &-\sum_{\ell=1}^d\int_{(e_\ell^{\perp} \cap Q)-\sfrac{e_{\ell}}{2}}  \varphi(x_i+2^{-j}y) (F(x_i+2^{-j}y) \cdot e_{\ell}) \d\H^{d-1}(y)\\
    =&\sum_{\ell=1}^d\int_{(e_\ell^{\perp} \cap Q)}  \varphi(x_i+2^{-j}y+2^{-j-1}e_{\ell}) (F(x_i+2^{-j}y+2^{-j-1}e_{\ell}) \cdot e_{\ell}) \d\H^{d-1}(y)\\
    &-\sum_{\ell=1}^d\int_{(e_\ell^{\perp} \cap Q)}  \varphi(x_i+2^{-j}y-2^{-j-1}e_{\ell}) (F(x_i+2^{-j}y-2^{-j-1}e_{\ell}) \cdot e_{\ell}) \d\H^{d-1}(y)\\
       =&\sum_{\ell=1}^d\int_{(e_\ell^{\perp} \cap Q)}  G_{j,k}(y;i) \cdot e_{\ell} \d\H^{d-1}(y)
\end{align*}
being  
\begin{align*}
    G_{j,k}(y;i):=&\varphi(x_i+2^{-j}y+2^{-j-1}e_{\ell}) F(x_i+2^{-j}y+2^{-j-1}e_{\ell})\\
    &-  \varphi(x_i+2^{-j}y-2^{-j-1}e_{\ell}) F(x_i+2^{-j}y-2^{-j-1}e_{\ell}) \\
    =&[\varphi(x_i+2^{-j}y+2^{-j-1}e_{\ell}) - \varphi(x_i+2^{-j}y-2^{-j-1}e_{\ell})] F(x_i+2^{-j}y+2^{-j-1}e_{\ell})\\
    &+ \varphi(x_i+2^{-j}y-2^{-j-1}e_{\ell})[F(x_i+2^{-j}y+2^{-j-1}e_{\ell})- F(x_i+2^{-j}y-2^{-j-1}e_{\ell})] \\
 \end{align*} 
 where we do not write explicitly the dependence of $G$ on the index $\ell$. Therefore
    \[
    |G_{j,l}(y;i)|\leq \|F\|_{\infty}\|\nabla \varphi\|_{\infty}2^{-j}+C\|\varphi\|_{\infty}2^{-j\alpha}.
    \]
Notice that the above estimate holds trivially also for $\alpha=0$ with a constant in place of $2^{-j\alpha}$. In particular
\begin{align*}
   \left|\int_{\partial \gamma_j}  \varphi (F\cdot \nu) \d\H^{d-1}(x)\right|&\leq  \sum_{i=1}^{k_j}  2^{-j(d-1)} \sum_{\ell=1}^d \int_{(e_\ell^{\perp} \cap Q)}  |G_{j,k}(y;i)| \d\H^{d-1}(y)\\
           &\leq C k_j (2^{-j d}+2^{-j((d-1)+\alpha)}) \leq C\varepsilon. 
    \end{align*}
Finally we can conclude that 
    \[
   \left|\int_{A} F\cdot \nabla \varphi\d x-\int_A g\varphi\d x\right| \leq C\varepsilon,
    \]
and since $C$ is independent from $\varepsilon,j$ and the above estimate is in force for any fixed $\varepsilon>0$, we obtain for each $\varphi\in C^{\infty}_c (A)$ the validity of the equation
    \[
    \int_{A} F\cdot \nabla \varphi\d x=\int_A g\varphi\d x.
    \]
\end{proof}
\begin{remark}
{\rm The proof shows that a lower regularity can be assumed on the function $g$ in estimating the third integral on the right of \eqref{split}, as for instance $g \in L^1_{loc}(A)$.}
\end{remark} \noindent
A direct application of regularity theory in Subsection \ref{sbsb:REGCMC} and Propositions \ref{prop:removability}, \ref{propo:weakRem} to the vector field $F =\frac{\nabla u}{\sqrt{1+|\nabla u|^2}}$ yields the following Corollary.
   
 \begin{corollary}\label{cor:removability}
    Let $A$ be an open bounded set, $H\in \R$, $\gamma\subset A$ a closed set and $u\in C^{\infty}(A\setminus \gamma)\cap C^{1,\alpha}(A)$ such that
        \[
        -\dive\left(\frac{\nabla u(x)}{\sqrt{1+|\nabla u(x)|^2} }\right)=H \ \ \text{weakly in} \quad A\setminus \gamma.
        \] If
        \begin{equation}
            \begin{cases}
            \H^{d-1+\alpha}(\gamma)=0, \quad \alpha \in (0,1],\\
            \H^{d-1}(\gamma)< +\infty, \quad \alpha=0, 
            \end{cases}
        \end{equation} \noindent then $u\in C^{\infty}(A)$ and
     \[
        -\dive\left(\frac{\nabla u(x)}{\sqrt{1+|\nabla u(x)|^2}}\right)=H \ \ \text{strongly in} \quad A.
        \]
    \end{corollary}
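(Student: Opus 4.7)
The plan is to reduce the statement to the two divergence-form removability results already established (Proposition \ref{prop:removability} for the case $\alpha=0$ and Proposition \ref{propo:weakRem} for $\alpha\in(0,1]$) by setting
\[
F(x) := \frac{\nabla u(x)}{\sqrt{1+|\nabla u(x)|^2}},\qquad g \equiv H,
\]
and then invoke the regularity theory recalled in Subsection \ref{sbsb:REGCMC} to upgrade the weak equation on $A$ to a strong analytic one.

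First I would verify the regularity of $F$. The map $\Phi:\R^d\to\R^d$ defined by $\Phi(p)=p/\sqrt{1+|p|^2}$ is smooth with bounded derivatives, hence globally Lipschitz. Since $u\in C^{1,\alpha}(A)$, the composition $F=\Phi\circ\nabla u$ inherits the same Hölder regularity: for $\alpha\in(0,1]$ we obtain $F\in C^{0,\alpha}(A)$, and for $\alpha=0$ we obtain $F\in C^0(A)$. The constant function $g\equiv H$ trivially belongs to $C(\R^d)$, and the hypothesis guarantees that
\[
-\dive(F)=g\qquad\text{weakly on }A\setminus\gamma
\]
in the sense of distributions tested against $C^\infty_c(A\setminus\gamma)$.

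Next I would apply the appropriate removability proposition. If $\alpha\in(0,1]$, then by assumption $\H^{d-1+\alpha}(\gamma)=0$, and Proposition \ref{propo:weakRem} applies verbatim to yield $-\dive(F)=H$ weakly on the whole of $A$. If instead $\alpha=0$, we have $\H^{d-1}(\gamma)<+\infty$, and Proposition \ref{prop:removability} yields the same conclusion. In either case we obtain a genuine weak solution of the constant mean curvature equation on $A$:
\[
\int_A \frac{\nabla u\cdot\nabla\varphi}{\sqrt{1+|\nabla u|^2}}\,\d x = \int_A H\,\varphi\,\d x\qquad\text{for all }\varphi\in C^\infty_c(A).
\]

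Finally, since $u\in H^1_{\mathrm{loc}}(A)$ is now a weak solution of the constant mean curvature equation on the entire open set $A$, the classical elliptic regularity recalled in Subsection \ref{sbsb:REGCMC} (following \cite{Ennio,Ennio+Guido}) guarantees that $u$ is in fact analytic on $A$, and in particular $u\in C^\infty(A)$, with the equation holding pointwise (strongly) on $A$. The essentially routine character of each step is why the result is stated as a corollary; the only point requiring mild care is the preservation of $C^{0,\alpha}$ regularity under the nonlinear composition with $\Phi$, which however is immediate from the smoothness and boundedness of $D\Phi$.
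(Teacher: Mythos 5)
Your proposal is correct and follows exactly the route the paper intends: the paper derives this corollary precisely by applying Propositions \ref{prop:removability} and \ref{propo:weakRem} to the vector field $F=\nabla u/\sqrt{1+|\nabla u|^2}$ (Hölder continuous since $\nabla u\in C^{0,\alpha}$ and $p\mapsto p/\sqrt{1+|p|^2}$ is Lipschitz) and then invoking the regularity theory of Subsection \ref{sbsb:REGCMC} to upgrade the weak solution on $A$ to an analytic, pointwise one. Your write-up supplies the same steps, with slightly more detail than the paper's one-line justification.
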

     
\begin{proof}[Proof of Theorem \ref{mainthm:local}]
We prove only the case $\alpha>0$, the case $\alpha=0$ being similar. Firstly, we observe that, since $\partial E \in C^{1,\alpha}(U)$, then $\partial E=\partial^*E$ on $U$. If $\partial E \cap \Gamma= \emptyset$, then $\partial E \cap U$ is an analytic hyper-surface with constant mean curvature equal to $H$, thanks to the know results of Subsection \ref{sbsb:REGCMC}. Otherwise, let $z\in (\partial E\cap \Gamma) \cap U$ and assume without loss of generality $z=0$, $\nu_E(0)=\nu_{\Omega}(0)=e_d$, $\nabla f_E(0)=\nabla f_{\Omega}(0)=0$. For some $r,R>0$  consider $Q_r\subset \R^{d-1}$ and the graph representation $(f_{E},D_r(R))$ of $\partial E$ around $z=0$, with $f_E\in C^{1,\alpha}(Q_r)$ (see Subsection \ref{sbsbsct:graph}, $\nu=e_d$ has been omitted for the sake of shortness). Let us set 
\[ 
 F(y):=y+f_E(y)e_d ,\quad \quad F(y) \in C^{1,\alpha}(Q_r; D_r(R) ),\]
\[\Gamma_{r,R}:= (\partial E\cap \Gamma) \cap D_r(R), \quad \gamma_{r,R}:=\{y\in Q_r \ | \ F(y)\in \Gamma_{r,R}\}.
    \] 
The function $f_E$ is a weak solution to   
 \[
    -\dive\left(\frac{\nabla f_E}{\sqrt{1+|\nabla f_E|^2}}\right)=H \ \ \text{in $Q_r\setminus \gamma_{r,R}$},.
    \]
and consequently by applying results of Subsection \ref{sbsb:REGCMC} we obtain that $f_E\in C^{\infty}(Q_r\setminus \gamma_{r,R})$ is a strong solution to the equation 
 \[
    -\dive\left(\frac{\nabla f_E}{\sqrt{1+|\nabla f_E|^2}}\right)=H \ \ \text{in $Q_r\setminus \gamma_{r,R}$}.
    \]
We observe that $F$ is a Lipschitz function, and the function $F^{-1}:\partial E\cap D_r(R) \rightarrow Q_r$ is Lipschitz too, possibly by decreasing $r,R$ further. In particular (see \cite[Proposition 3.5]{maggiBOOK}) this implies
    \[
    \H^{d-2+\alpha}(\gamma_{r,R})= \H^{d-2+\alpha}(F^{-1}(\Gamma_{r,R}))\leq C \H^{d-2+\alpha}(\Gamma_{r,R})=0.
    \]
We are exactly in the hypothesis of Corollary \ref{cor:removability} and thus we can conclude that $f_E\in C^{\infty}(Q_r)$ is an analytic solution to
\[
    -\dive\left(\frac{\nabla f_E}{\sqrt{1+|\nabla f_E|^2}}\right)=H \ \ \text{in $Q_r$}.
    \]
Consequently, for a small $\varrho>0$, $\partial E\cap B_{\varrho}(z)$  is an analytic hyper-surface with constant mean curvature equal to $H$. By generality of $z \in (\partial E \cap \Gamma) \cap U$, we conclude that $ \partial E   \cap U$ is an analytic hyper-surface with constant mean curvature equal to $H$.
\end{proof}

\section{Proof of Theorem \ref{thm:mainThmCONTACT}: Hausdorff dimension of the contact surface}\label{ref:pfTHMs}
\subsection{Technical tools} In this Section we collect some preliminary tools that are useful for understanding both the local properties of the contact set $\partial E \cap \partial \Omega$ and the interior set $\partial E \cap \Omega$. The former inherits the regularity of $\partial \Omega$ through Lemma \ref{lem:impReg} below, while the latter has a controlled behaviour that we explain with Lemma \ref{lem:tecnico} and which permits the application of an adaptation of Theorem \ref{thm:AlexRef}.

\subsubsection{Regularity of the contact set}\label{sbsct:regimp}
In order to apply Theorem \ref{mainthm:local}, we are concerned with the following improvement of regularity which relies on the interpretation of $\partial \Omega$ as a regular obstacle. 
\begin{lemma}\label{lem:impReg}
Let $\Omega$ be an open bounded set with $\partial\Omega\in C^{1,\alpha}$ for some $\alpha\in [0,1]$ and let $E\subset \Omega$ be one of its Cheeger sets. Then $\partial E \cap \partial \Omega$ has boundary regularity of class $C^{1,\alpha}$.
\end{lemma}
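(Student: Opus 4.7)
The plan is to run the obstacle-problem interpretation of the Cheeger problem developed in Subsection~\ref{sbsbsct:graph} through a Campanato-type comparison argument in the spirit of \cite{giaquinta1981remarks}. Fix a contact point $x\in\partial E\cap\partial\Omega$; after a translation and rotation we may assume $x=0$ and $\nu_E(0)=\nu_\Omega(0)=e_d$. By assertion (IV) of Theorem~\ref{thm:reg} we already have a $C^1$ graph representation $f_E$ of $\partial E$, while the assumption $\partial\Omega\in C^{1,\alpha}$ gives a representation $f_\Omega\in C^{1,\alpha}(Q_r)$ of $\partial\Omega$. The variational system \eqref{upper} then says precisely that $f_E$ is a solution of the obstacle problem with the $C^{1,\alpha}$ obstacle $f_\Omega$ from above, so the whole task reduces to upgrading $f_E$ from $C^1$ to $C^{1,\alpha}$ around $x$.

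Working in a small ball $B_\rho(x_0)\subset Q_r$ I would introduce the free comparison $v$, the solution of
\[
-\dive\left(\frac{\nabla v}{\sqrt{1+|\nabla v|^2}}\right)=h(\Omega)\quad\text{in }B_\rho(x_0),\qquad v=f_E\ \text{on }\partial B_\rho(x_0).
\]
Since $f_E$ is already $C^1$, $v$ solves a uniformly elliptic equation with bounded gradient, so standard Schauder theory provides the quadratic Campanato decay
\[
\int_{B_\sigma(x_0)}|\nabla v-(\nabla v)_{B_\sigma}|^2\,\d y \leq C\left(\frac{\sigma}{\rho}\right)^{d+2}\int_{B_\rho(x_0)}|\nabla v-(\nabla v)_{B_\rho}|^2\,\d y
\]
for every $\sigma\in(0,\rho]$. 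The pivotal estimate is the $H^1$ bound
\[
\int_{B_\rho(x_0)}|\nabla (f_E-v)|^2\,\d y\leq C\rho^{d+2\alpha},
\]
which one obtains by testing the variational inequality \eqref{upper} against $\min\{v,f_\Omega\}$ (an admissible competitor because $f_E=f_\Omega$ on the contact set while the maximum principle keeps $v$ below $f_\Omega$ on its complement for $\rho$ small) and exploiting the pointwise Taylor-type control $|f_\Omega(y)-f_\Omega(x_0)-\nabla f_\Omega(x_0)\cdot(y-x_0)|\leq C\rho^{1+\alpha}$ granted by $f_\Omega\in C^{1,\alpha}$.

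Combining the two displays together with the trivial bound $\int_{B_\rho}|\nabla f_E|^2\leq C\rho^d$ coming from $f_E\in W^{1,\infty}$ puts $\phi(\rho):=\int_{B_\rho(x_0)}|\nabla f_E-(\nabla f_E)_{B_\rho}|^2\,\d y$ in the setting of Lemma~\ref{magiclemma} with $a=d+2\alpha$ and $b=d+2$, which yields $\phi(\rho)\leq C\rho^{d+2\alpha}$. By the Campanato isomorphism recalled in Subsection~\ref{sbs:REG:CAM} this means $\nabla f_E\in C^{0,\alpha}$ near $x_0$, i.e.\ $f_E\in C^{1,\alpha}$, which is the desired conclusion. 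The main obstacle in this program is precisely the bound $\int_{B_\rho}|\nabla(f_E-v)|^2\leq C\rho^{d+2\alpha}$: since $f_E-v$ need not vanish on the contact set $\gamma_{r,R}$, the argument must exploit the tangential contact $\nabla f_E=\nabla f_\Omega$ on $\gamma_{r,R}$ (from (III) of Theorem~\ref{thm:reg}) together with the $C^{1,\alpha}$ behaviour of the obstacle at $x_0$, in order to convert its scale-$\rho$ oscillation into a gradient bound of order $\rho^{d+2\alpha}$ rather than the naive $\rho^d$.
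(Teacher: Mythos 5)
Your proposal follows essentially the same route as the paper's proof of Lemma \ref{lem:impReg}: localize at a contact point via the obstacle formulation \eqref{eqn:OBS}--\eqref{upper}, compare $f_E$ on a small ball with the unconstrained solution of the curvature equation, feed the truncated competitor $\min\{v,f_\Omega\}$ into the variational inequality, extract a $\rho^{d+2\alpha}$ closeness estimate from the $C^{1,\alpha}$ regularity of the obstacle, and conclude by Lemma \ref{magiclemma} and the Campanato isomorphism. Three remarks on the points where your sketch deviates or stays incomplete. First, the step you yourself flag as the main obstacle, $\int_{B_\rho}|\nabla(f_E-v)|^2\le C\rho^{d+2\alpha}$, is indeed the crux, but it is not closed by the ingredient you name (the gradient matching $\nabla f_E=\nabla f_\Omega$ on $\gamma_{r,R}$), which the paper never uses. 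The actual mechanism is: subtract the weak equation for $v$, tested with $f_E-\min\{v,f_\Omega\}\in H^1_0(B_\rho)$, from the variational inequality \eqref{variational}, and use the monotonicity \eqref{MCO-monotonicity} and boundedness \eqref{MCO-boundedness} of $z\mapsto z/\sqrt{1+|z|^2}$ together with Young's inequality to get
\[
\int_{B_\rho}|\nabla(f_E-v)|^2\,\d y\;\le\; C\int_{B_\rho\cap\{v>f_\Omega\}}|\nabla\bigl(v-f_\Omega\bigr)|^2\,\d y;
\]
the right-hand side is then estimated by testing the equation for $v$ with $(v-f_\Omega)^+\in H^1_0(B_\rho)$, subtracting the constant (hence divergence-free) vector field $(\nabla f_\Omega)_{x_0,\rho}/\sqrt{1+|(\nabla f_\Omega)_{x_0,\rho}|^2}$, and using only the oscillation bound $\int_{B_\rho}|\nabla f_\Omega-(\nabla f_\Omega)_{x_0,\rho}|^2\,\d y\le C\rho^{d+2\alpha}$ coming from $f_\Omega\in C^{1,\alpha}$; this is exactly the paper's computation \eqref{gdistance}--\eqref{U-v}, and no information about the contact set enters. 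Second, admissibility of $\min\{v,f_\Omega\}$ requires no maximum principle (in general $v$ does \emph{not} stay below the obstacle — that is precisely why the truncation and the set $\{v>f_\Omega\}$ appear): it is admissible because it is $\le f_\Omega$ by construction and equals $f_E$ on $\partial B_\rho$, where $v=f_E\le f_\Omega$. Third, your exponents $b=d+2>a=d+2\alpha$ in the iteration lemma force $\alpha\in(0,1)$ (and for the quasilinear comparison problem the decay is more safely quoted as $d+2-\varepsilon$, as in \eqref{Uregularity}); the endpoints $\alpha=0,1$ should be dispatched directly by assertions (IV) and (V) of Theorem \ref{thm:reg}, as the paper does. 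A small bonus of your variant: choosing the comparison function with right-hand side $h(\Omega)$ instead of $0$ makes the zero-order terms cancel exactly, so the Poincar\'e--absorption step the paper needs after \eqref{hard} disappears.
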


\begin{proof} The cases $\alpha =0,1$ are essentially stated in Assertions (IV) and (V) of Theorem \ref{thm:reg}, so we consider $\alpha \in (0,1)$. We fix $x\in \partial E\cap \partial \Omega$ and we invoke assertion (IV) of Theorem \ref{thm:reg} to deduce that $\partial E$ has $C^1$ regularity in a neighbourhood of $x$ and that $\nu_E(x)=\nu_{\Omega}(x)=\nu$. Assume without loss of generality that $x=0$, $\nu=e_d$, and for $r,R>0$ let $(f_E, D_r(R))$, $(f_{\Omega},D_r(R))$ the graph representations of $\partial E$, $\partial \Omega$ respectively. Then the regularity of $\partial \Omega,\partial E$, at $x=0$ yields $f_{\Omega}\in C^{1,\alpha}(Q_r)$, $f_E\in C^1(Q_r)$. We may assume without loss of generality that
    \[
    f_E(0)=f_{\Omega}(0), \ \ \  \nabla f_E(0)=\nabla f_{\Omega}(0)=0 , \ \ \ f_{\Omega} \ge f_E.
    \]
We will show that for there is a $\varrho_*<\sfrac{r}{2}$ such that for all $x\in Q_{\sfrac{r}{2}}$ and for all $0<\varrho_1\leq \varrho_2 \leq \varrho_*$, $ \beta >\alpha$, and for $A,B>0$ independent of the chosen point $x \in Q_{\sfrac{r}{2}}$,
\begin{equation} \label{Stairwaytoheaven}
\int_{Q_{\varrho_1}(x)} | \nabla f_E- (\nabla f_E)_{Q_{\varrho_1}(x)}|^2 \, \d y \leq A \bigg(\frac{\varrho_1}{\varrho_2}  \bigg)^{2 \beta+d} \int_{Q_{\varrho_2}(x)} | \nabla f_E- (\nabla f_E)_{Q_{\varrho_2}(x)}|^2 \, \d y+ B \varrho_2^{2\alpha+d}.
\end{equation} \noindent 
 Indeed the above, combined with the fact that
    \[
    \int_{Q_{\varrho}(x)} | \nabla f_E- (\nabla f_E)_{Q_{\varrho}(x)}|^2 \d y\leq \sigma \varrho^d, \ \ \sigma:=2\|\nabla f_E\|^2_{L^{\infty}(Q_{r})}
    \]
will allow us, by invoking Lemma \ref{magiclemma} with $b=2\beta+d>2\alpha+d=a>d$, on
 \[
 \phi_x(\varrho):= \int_{Q_{\varrho}(x)} | \nabla f_E- (\nabla f_E)_{Q_{\varrho}(x)}|^2 \d y, \ \ \ \varrho\leq \sfrac{r}{2}
 \] 
 to state that 
    \begin{equation}\label{C}
    \phi_x(\varrho)\leq C \varrho^{2\alpha+d} \ \ \ \text{for all $\varrho\leq \varrho^*$}.
    \end{equation}
The constants $A,B$ in \eqref{Stairwaytoheaven} do not depend on $x\in Q_{\sfrac{r}{2}}$ and consequently the constant $C$ in \eqref{C} above depends on $A,B,\alpha,\beta,d,\varrho_*,\sigma$ but not on $\phi_x$. Therefore the following uniform estimate holds in $Q_{\sfrac{r}{2}}$,
    \[
     \int_{Q_{\varrho}(x)} | \nabla f_E- (\nabla f_E)_{Q_{\varrho}(x)}|^2 \d y\leq C \varrho^{2\alpha+d} \ \ \ \text{for all $\varrho\leq \varrho^*$}.
    \] 
    \noindent 
This implies $\nabla f_E\in \L^{2,2\alpha+d}(Q_{\sfrac{r}{2}})$ and Campanato isomorphism gives the desired regularity  $f_E\in C^{1,\alpha}(Q_{\sfrac{r}{2}})$. Let us divide the proof of estimate \eqref{Stairwaytoheaven} in two main steps.\\
\smallskip

\noindent {\bf Step one:} \textit{Reduction to a variational inequality.} 
We consider the obstacle problem \eqref{eqn:OBS} to which $f_E$ is a solution and we show that this leads to a variational inequality. Let us define the convex set
\[
K= \{w \in H^1(Q_{r})\ | \ w=f_E \, \, \text{on} \, \, \partial Q_{r}, \, \,  w \leq f_{\Omega} \,\,  \text{in} \,\,   Q_{r}  \},
\]
and the energy functional $I:K \rightarrow \R$
\[
I(u)= \int_{Q_{r}} \bigg(\sqrt{1+|\nabla u|^2}-h(\Omega) \, \, u \bigg) \d y
\]
to be minimized in \eqref{eqn:OBS}. Since $E$ is a Cheeger set of $\Omega$, it achieves the infimum of $I$ on $K$, i.e.
\[
I(f_E)= \inf_{ w \in K} 
I(w). \]
For every $u \in K$, $t \in [0,1]$ we have that $f_E-t(f_E-u) ) \in K$ 
and consequently $I(f_E) \leq I(f_E-t(f_E-u))$, yielding the inequality
\[
\frac{\d}{\d t}\Big{|}_{t=0} I(f_E-t(f_E-u)) \geq 0.
\]
Finally, a classical calculation leads us to the variational inequality 
\begin{equation} \label{variational}
\int_{Q_{r}}\bigg{[}\frac{\nabla f_E \cdot \nabla (f_E-u)}{\sqrt{1+|\nabla f_E |^2}} - h(\Omega) (f_E-u)\bigg{]} \, \d y \leq 0, \quad \quad \forall u \in K.
\end{equation}
\smallskip

\noindent \textbf{Step two:} \textit{The comparison technique.} We consider $0<\varrho_1<\varrho_2 \leq \sfrac{r}{2}$, and we split the function $f_E$ into $f_E=U+(f_E-U)$, where $U$ is the solution to the Dirichlet problem
\begin{equation} \label{comparison}
\begin{cases}
\int_{Q_{\varrho_2}(x)} \frac{\nabla U}{\sqrt{1+|\nabla U|^2}} \cdot \nabla \varphi \, \d y=0,&\forall \varphi \in H^1_0(Q_{\varrho_2}(x))\\
(U-f_E) \in H^1_0(Q_{\varrho_2}(x)).
\end{cases}
\end{equation}
By classical results on regularity for solutions to quasilinear equations as \eqref{comparison} we know that $U \in C^{\infty}(Q_{{\varrho_2}}(x))$ and in particular that for all $\varepsilon>0,  \varrho_1\leq \varrho_2$, we have the estimate
\begin{equation} \label{Uregularity}
\int_{Q_{\varrho_1}(x)} | \nabla U- (\nabla U)_{x,\varrho_1}|^2 \, \d y \leq c \bigg(\frac{\varrho_1}{\varrho_2} \bigg)^{d+2-\varepsilon} \int_{Q_{\varrho_2}(x)} | \nabla U- (\nabla U)_{x,{\varrho}_2}|^2 \, \d y,
\end{equation} for a constant $c=c(d)$ that in the sequel may vary from line to line. Write $ f_{x,\varrho}: = f_{Q_{\varrho}(x)}$ to ease notation. We estimate the averaged integral of $\nabla f_E$ with

\begin{align}
\int_{Q_{\varrho_1}(x)} | \nabla f_E- (\nabla f_E)_{x,\varrho_1}|^2 \, \d y \leq & c\left[ \int_{Q_{\varrho_1}(x)}  | \nabla U- (\nabla U)_{x,\varrho_1}|^2 \, \d y \right. \nonumber \\
&\left. +\int_{Q_{\varrho_1}(x)}  | (\nabla U)_{x,\varrho_1}- (\nabla f_E)_{x,\varrho_1}|^2 \, \d y +\int_{Q_{\varrho_1}(x)}  | \nabla (f_E- U)|_{x,\varrho_1}|^2 \, \d y\right] \nonumber\\
  \leq & c \bigg(\frac{\varrho_1}{\varrho_2} \bigg)^{d+2-\varepsilon} \int_{Q_{\varrho_2}(x)} | \nabla U- (\nabla U)_{x,\varrho_2}|^2 \, \d y+ 2c \int_{Q_{\varrho_1}(x)} |\nabla (f_E-U)|^2\, \d y \nonumber\\
 \leq & c \left[ \bigg(\frac{\varrho_1}{{\varrho}_2} \bigg)^{d+2-\varepsilon} \int_{Q_{\varrho_2}(x)} | \nabla f_E- (\nabla f_E)_{x,\varrho_2}|^2 \, \d y +  \int_{Q_{\varrho_2}(x)} |\nabla (f_E-U)|^2\, \d y\right].\label{fEdecomposition}
\end{align}
 \noindent Last inequality is due to the following consideration. Let us show directly that
 \begin{equation} \label{Uestimate}
\int_{Q_{\varrho_2}(x)} | \nabla U- (\nabla U)_{x,\varrho_2}|^2 \, \d y \leq c \left[ \int_{Q_{\varrho_2}(x)} |\nabla f_E- (\nabla f_E)_{x,\varrho_2}|^2 \d y+ \int_{Q_{\varrho_2}(x)} | \nabla f_E - \nabla U|^2\, \d y  \right].
\end{equation}
Firstly we observe that, the $U$ function satisfies to the following equation
\begin{equation} \label{U+c}
\begin{cases}
\int_{Q_{\varrho_2}(x)} \bigg[\frac{\nabla U}{\sqrt{1+|\nabla U|^2}}-\frac{(\nabla U)_{x,\varrho_2}}{\sqrt{1+|(\nabla U)_{x,\varrho_2}|^2}} \bigg] \cdot \nabla \varphi \, \d y=0, & \forall \varphi \in H^1_0(Q_{\varrho_2}(x)),\\
U-f_E \in H^1_0(Q_{\varrho_2}(x)).
\end{cases}
\end{equation} \noindent
Secondly we recall that the map $z\mapsto \frac{z}{\sqrt{1+|z|^2}}$ satisfies the following  fine properties (already observed by \cite{suarez} to prove fine properties of the Fr\'echet derivative of the area functional)
:\\
\begin{enumerate}
    \item[]{\it Monotonicity}: there exists $\nu>0$ such that for all $z_1,z_2 \in \mathbb{R}^n$
\begin{equation} \label{MCO-monotonicity}
\bigg( \frac{z_1}{\sqrt{1+|z_1|^2}}-\frac{z_2}{\sqrt{1+|z_2|^2}} \bigg) \cdot  (z_1-z_2) \ge \nu |z_1-z_2|^2.
\end{equation} \noindent
\item[]{\it Boundedness}: by the Lipschitz character of the map $z \rightarrow z/ \sqrt{1+z^2}$ there exists a $\mu>0$ such that
\begin{equation} \label{MCO-boundedness}
\bigg( \frac{z_1}{\sqrt{1+|z_1|^2}}-\frac{z_2}{\sqrt{1+|z_2|^2}} \bigg) \cdot (a-b) \leq \mu |z_1-z_2| \, |a-b|, \quad \quad z_1,z_2,a,b \in \mathbb{R}^n.
\end{equation}
\end{enumerate}
Now we introduce $\varphi(y) =(U(y)- (\nabla U)_{x,\varrho_2} \cdot y)+((\nabla U)_{x,\varrho_2} \cdot y -f_E(y))$ in equation \eqref{U+c}. By properties \eqref{MCO-monotonicity}, \eqref{MCO-boundedness} and through the use of Young inequality $ab \leq \varepsilon a^p+ C(\varepsilon) b^{p'},\, a,b>0$ when $p,p'$ are conjugate exponents, we obtain \begin{align*}
\int_{Q_{\varrho_2}(x)} | \nabla U- (\nabla U)_{x,\varrho_2}|^2 \, \d y  \leq & c\int_{Q_{\varrho_2}(x)} \bigg[\frac{\nabla U}{\sqrt{1+|\nabla U|^2}}-\frac{(\nabla U)_{x,\varrho_2}}{\sqrt{1+|(\nabla U)_{x,\varrho_2}|^2}} \bigg] \cdot (\nabla U- (\nabla U)_{x,\varrho_2}) \, \d y \\ 
\leq &c \int_{Q_{\varrho_2}(x)} \bigg[\frac{\nabla U}{\sqrt{1+|\nabla U|^2}}-\frac{(\nabla U)_{x,\varrho_2}}{\sqrt{1+|(\nabla U)_{x,\varrho_2}|^2}} \bigg] \cdot (\nabla f_E-(\nabla U)_{x,\varrho_2}) \, \d y \\ 
\leq & c \int_{Q_{\varrho_2}(x)} | \nabla U - (\nabla U)_{x,\varrho_2}|\, |\nabla f_E- (\nabla U)_{x,\varrho_2}| \, \d y \\ 
\leq  &c \bigg{[}\varepsilon \int_{Q_{\varrho_2}(x)} | \nabla U- (\nabla U)_{x,\varrho_2}|^2\, \d y + C(\varepsilon)  \int_{Q_{\varrho_2}(x)} | \nabla f_E- (\nabla U)_{x,\varrho_2}|^2\, \d y   \bigg{]}.
\end{align*} \noindent Hence by choosing $\varepsilon = (2 c)^{-1}$ we can reabsorb the smaller term on the left hand side of the inequality to get
\begin{align*}
\int_{Q_{\varrho_2}(x)} | \nabla U- (\nabla U)_{x,\varrho_2}|^2 \, \d y \leq  & c  \int_{Q_{\varrho_2}(x)} | \nabla f_E- (\nabla f_E)_{x,\varrho_2}+(\nabla f_E)_{x,\varrho_2} - (\nabla U)_{x,\varrho_2}|^2        \, \d y \\
\leq & c \bigg{[} \int_{Q_{\varrho_2}(x)} | \nabla f_E- (\nabla f_E)_{x,\varrho_2}|^2 \, \d y+ \int_{Q_{\varrho_2}(x)} \bigg| \fint_{Q_{\varrho_2}(x)} ( \nabla f_E - \nabla U) \, \d x \bigg|^2 \, \d y  \bigg{]} 
\end{align*} \noindent implying \eqref{Uestimate} by an application of Jensen's inequality to the last term.\\ \\
To accomplish Campanato's inequality \eqref{Stairwaytoheaven}, we estimate the last quantity on the right hand side of the inequality of \eqref{fEdecomposition}. We get back to \eqref{variational} writing $f_E-U+U-v$ instead of $f_E-v$ for $K \ni v = \min \{U, f_{\Omega} \}$, and $U$ solving \eqref{comparison} to have
\[ \int_{Q_{\varrho_2}(x)} \bigg[\frac{\nabla f_E}{\sqrt{1+|\nabla f_E|^2}}- \frac{\nabla U}{\sqrt{1+|\nabla U|^2}}   \bigg] \cdot \nabla(f_E+(-U+U)-v) \d y \leq \int_{Q_{\varrho_2}(x)} h_{\Omega} (f_E-v) \d y \, ,
\] so that monotonicity and boundedness of mean curvature operator together with Young's inequality imply
\begin{equation} \label{hard} \int_{Q_{\varrho_2}(x)} | \nabla (f_E-U)|^2 \d y \leq 
c\int_{Q_{\varrho_2}(x)} |\nabla (U-v)|^2 \d y+ \int_{Q_{\varrho_2}(x)} h_{\Omega} (f_E-v) \d y \,.
\end{equation} \noindent
But $(U-v)$ satisfies the following equation $\forall \varphi \in H^1_0 (Q_{\varrho_2}(x))$
\begin{equation} \label{gdistance}
\begin{split}
\int_{Q_{\varrho_2}(x)} &\bigg( \frac{\nabla U}{\sqrt{1+| \nabla U|^2}}-\frac{\nabla v}{\sqrt{1+| \nabla v|^2}} \bigg) \cdot \nabla \varphi \, \d y \\
&= - \int_{Q_{\varrho_2}(x)} \bigg( \frac{\nabla v}{\sqrt{1+| \nabla v|^2}}-\frac{(\nabla f_{\Omega})_{x,\varrho_2} }{\sqrt{1+| (\nabla f_{\Omega})_{x,\varrho_2}|^2}} \bigg) \cdot \nabla \varphi \, \d y, 
\end{split}
\end{equation}
so that, by inserting in \eqref{gdistance} $\varphi=U-v$ and with the help of Young's inequality again, we obtain
\begin{align*}
\int_{Q_{\varrho_2}(x)} |\nabla(U-v)|^2 \d y  \leq & c \int_{Q_{\varrho_2}(x) \cap \{ f_{\Omega} 
 \leq U \} } \bigg( \frac{\nabla f_{\Omega}}{\sqrt{1+| \nabla f_{\Omega}|^2}}-\frac{(\nabla f_{\Omega})_{{x,\varrho_2}} }{\sqrt{1+| (\nabla f_{\Omega})_{{x,\varrho_2}}|^2}} \bigg) \cdot \nabla (U-v) \d y \\ 
& \leq c\bigg{[}\varepsilon \int_{Q_{{\varrho_2}}(x)} |\nabla(U-v)|^2 \d y+ C(\varepsilon)\int_{Q_{{\varrho_2}}(x)} |\nabla f_{\Omega}- (\nabla f_{\Omega})_{{x,\varrho_2}} |^2 \d y   \bigg{]}.
\end{align*} \noindent
Hence by letting $\varepsilon = (2c)^{-1}$ and exploiting the regularity of the obstacle $f_{\Omega} \in C^{1,\alpha}(Q_r)$ we get finally \begin{equation} \label{U-v}
\int_{Q_{{\varrho_2}}(x)} |\nabla (U-v)|^2 \d y \leq c \int_{Q_{{\varrho_2}}(x)} |\nabla f_{\Omega}-(\nabla f_{\Omega})_{{x,\varrho_2}}|^2 \d y \leq c {\varrho_2}^{d+2\alpha}.
\end{equation}
Last term in \eqref{hard} can be estimated with Poincar\'e and H\" older inequality as
\[
\int_{Q_{\varrho_2}(x)} |f_E-v| \d y \leq c {\varrho_2}^{\frac{d+2}{2}} \bigg{[} \bigg( \int_{Q_{\varrho_2}(x)} | \nabla (f_E-U)|^2 \d y \bigg)^{\frac{1}{2}} +\bigg( \int_{Q_{\varrho_2}(x)} | \nabla (U-v)|^2 \d y \bigg)^{\frac{1}{2}} \bigg{]}, 
\] 
so that for a radius small enough we can reabsorb the first term on the right and obtain the required estimate
\begin{equation} \label{fE-U}
\int_{Q_{\varrho_2}(x)} | \nabla (f_E-U)|^2 \d y \leq c {\varrho_2}^{d+2\alpha}. \end{equation}
Gathering together \eqref{fEdecomposition}, \eqref{fE-U} we obtain \eqref{Stairwaytoheaven}, as desired.
\end{proof}

\subsubsection{Removability of small sets in the interior}\label{sbsct:REM}
In order to deal with the negligible set $\Sigma$ produced by Assertion (II) of Theorem \ref{thm:reg},  we prove the following crucial Proposition. 

\begin{proposition}\label{Prop:weakAlFINAL}
Let $E$ be a set of finite perimeter and $\Sigma \subset \partial E$ be a closed set such that $\H^{d-2}(\Sigma)=0$. Suppose that $E,\Sigma$ have the following properties:
    \begin{itemize}
        \item[a)] There exists $C_0,r_0$ depending on $\Sigma$ such that 
            \[
            P(E;B_r(x))\leq C_0 r^{d-1} \ \ \ \text{for all $x\in \Sigma$, $r<r_0$};
            \]
        \item[b)] $\partial^* E$ has constant distributional mean curvature equal to $H$ on $\R^d\setminus \Sigma$, i.e.
        \[
        \int_{\partial^* E} \dive_E (T)\d \H^{d-1}(x)=\int_{\partial^* E } H (\nu_E(x)\cdot T(x) ) \d\H^{d-1}(x) \ \ \ \text{for all $T\in C^{\infty}_c(\R^d\setminus \Sigma;\R^d)$}.
        \]
    \end{itemize} 
    Then $E$ is a finite union of balls of radius $\frac{d-1}{H}$.
\end{proposition}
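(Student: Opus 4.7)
The strategy is to upgrade the constant mean curvature identity in hypothesis (b) from test fields in $C^\infty_c(\R^d\setminus\Sigma;\R^d)$ to all test fields in $C^\infty_c(\R^d;\R^d)$, so that the refined Alexandrov Theorem \ref{thm:AlexRef} can be applied and forces $E$ to be a finite union of balls of radius $(d-1)/H$. The only obstruction is the presence of $\Sigma$, which must be excised by a cutoff argument whose cost is controlled by hypothesis (a).

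First, fix $T\in C^\infty_c(\R^d;\R^d)$ with compact support $K$. Since $\Sigma\cap K$ is compact with $\H^{d-2}(\Sigma\cap K)=0$, for every $\varepsilon>0$ Proposition \ref{propo:HausMesChar} (equivalently, the definition of Hausdorff measure) yields a finite covering $\{B_{r_i}(x_i)\}_{i=1}^N$ of $\Sigma\cap K$ with $x_i\in\Sigma$, $2r_i<r_0$, and $\sum_{i=1}^N r_i^{d-2}<\varepsilon$. A standard partition-of-unity construction produces a smooth cutoff $\eta_\varepsilon\in C^\infty_c(\R^d\setminus \Sigma)$ with $0\leq \eta_\varepsilon \leq 1$, $\eta_\varepsilon\equiv 0$ on $\bigcup_i B_{r_i}(x_i)$, $\eta_\varepsilon\equiv 1$ outside $\bigcup_i B_{2r_i}(x_i)$, and $|\nabla\eta_\varepsilon|\leq C/r_i$ on $B_{2r_i}(x_i)$ (with $C$ depending only on $d$ and the multiplicity of the covering).

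Next, since $\eta_\varepsilon T\in C^\infty_c(\R^d\setminus\Sigma;\R^d)$, hypothesis (b) applies:
\[
\int_{\partial^* E}\dive_E(\eta_\varepsilon T)\d\H^{d-1} = \int_{\partial^* E} H\,\eta_\varepsilon (\nu_E\cdot T)\d\H^{d-1}.
\]
A direct computation of the tangential divergence gives the Leibniz rule
\[
\dive_E(\eta_\varepsilon T)=\eta_\varepsilon\,\dive_E(T)+ T\cdot \nabla^E \eta_\varepsilon,
\]
where $\nabla^E \eta_\varepsilon=\nabla\eta_\varepsilon-(\nu_E\cdot\nabla\eta_\varepsilon)\nu_E$. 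The remainder term is estimated using hypothesis (a):
\[
\left|\int_{\partial^* E} T\cdot\nabla^E\eta_\varepsilon\,\d\H^{d-1}\right|\leq \|T\|_\infty \sum_{i=1}^N \frac{C}{r_i}\,P(E;B_{2r_i}(x_i))\leq C'\,C_0\,\|T\|_\infty \sum_{i=1}^N r_i^{d-2}\leq C''\varepsilon.
\]
This is exactly the critical interplay: the blow-up $1/r_i$ of $|\nabla\eta_\varepsilon|$ is absorbed by the one-dimension-lower perimeter control $r_i^{d-1}$, leaving the factor $r_i^{d-2}$ that matches the dimensional assumption on $\Sigma$.

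Finally, since $\H^{d-2}(\Sigma)=0$ implies $\H^{d-1}(\Sigma)=0$, we have $\eta_\varepsilon\to 1$ $\H^{d-1}$-almost everywhere on $\partial^* E$, and both integrals $\int \eta_\varepsilon \dive_E(T)\d\H^{d-1}$ and $\int H\,\eta_\varepsilon(\nu_E\cdot T)\d\H^{d-1}$ converge by dominated convergence (using $P(E)<+\infty$ and $T$ compactly supported). Letting $\varepsilon\to 0$ yields
\[
\int_{\partial^* E}\dive_E(T)\d\H^{d-1}=\int_{\partial^* E} H\,(\nu_E\cdot T)\d\H^{d-1}\qquad \forall\, T\in C^\infty_c(\R^d;\R^d).
\]
An application of Theorem \ref{thm:AlexRef} (finite volume being automatic in the intended application to Cheeger subsets of a bounded $\Omega$) concludes that $E$ is a finite union of essentially disjoint balls of radius $(d-1)/H$. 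The main obstacle that has to be handled carefully is the construction of the cutoff $\eta_\varepsilon$ with a multiplicity-controlled gradient bound; everything else is quantitative book-keeping.
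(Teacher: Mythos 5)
Your proposal is correct and takes essentially the same route as the paper: excise $\Sigma$ with a cutoff vanishing near $\Sigma$, absorb the gradient cost through hypothesis (a) via $r_i^{-1}P(E;B_{2r_i}(x_i))\leq C\,r_i^{d-2}$, which is summable to $C\varepsilon$ precisely because $\H^{d-2}(\Sigma)=0$, and then apply Theorem \ref{thm:AlexRef}; the paper differs only in bookkeeping, using dyadic cubes from Proposition \ref{propo:HausMesChar} and a mollified min-type cutoff instead of balls centered on $\Sigma$. One small repair: instead of appealing to dominated convergence (pointwise $\H^{d-1}$-a.e. convergence of $\eta_\varepsilon$ is not immediate, since the coverings for different $\varepsilon$ are not nested), bound the remaining error terms quantitatively by $\H^{d-1}\llcorner_{\partial^* E}\bigl(\{\eta_\varepsilon\neq 1\}\bigr)\leq \sum_i P(E;B_{2r_i}(x_i))\leq C\varepsilon\, r_0$, which hypothesis (a) already gives you and which is exactly how the paper closes this step.
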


\begin{remark}\label{rmk:hardtimes}
{\rm Notice that Proposition \ref{Prop:weakAlFINAL} cannot be deduced by invoking Theorem \ref{mainthm:local} since no information on the regularity of $\partial E$ is given on $\Sigma$ other than property a).}
\end{remark}

\begin{proof}
Let $C_0,r_0$ be the constants given by property a).  By invoking Proposition \ref{propo:HausMesChar}, for any $\varepsilon>0$ we can find a $j\in \N$ and a finite number of cubes $Q_{\varrho_j}(x_1),\ldots,Q_{\varrho_j}(x_{k_j}) \in \mathcal{Q}_j$ of edge length $\varrho_j=2^{-j}$ such that 
    \[
    \Sigma\subset \bigcup_{i=1}^{k_j} Q_{\varrho_j}(x_i) , \ \  k_j \varrho_j^{d-2}\leq \varepsilon.
    \]
Up to further increase $j$ we can also infer that $\varrho_j\leq \frac{r_0}{8d}$. Subordinated to this proof we introduce the short notation
    \[
    U_{s}:=\bigcup_{i=1}^{k_j} Q_{s}(x_i).
    \] 
Let $\zeta_i\in C^{\infty}_c(Q_{2\varrho_j}(x_i))$, $|\zeta_i|\leq 1$ such that
    \begin{equation}\label{control}
  \zeta_i=  \left\{
    \begin{array}{ll}
        1 &  \text{on $Q_{2\varrho_j}(x_i) $ } \\
        0 &  \text{on $Q_{3\varrho_j}(x_i)^c $ }
    \end{array}
    \right.
    \end{equation}
and with $|\nabla \zeta_i|\leq \sfrac{2}{\varrho_j}$. Set now 
 \[
 \zeta(x):=\min_{i\in \N}\{1-\zeta_i(x)\}.
 \]
 Then $\zeta$ is piece-wise smooth and for almost every $x\in \R^d$ satisfies (see for instance \cite{sternberg1999connectivity})
    \begin{align}
    |\nabla \zeta(x)|\leq& \sum_{i=1}^{k_j} |\nabla \zeta_i(x)|\\
    \zeta(x)=&\left\{\begin{array}{ll}
        0 \ \ \ & \text{on $U_{2\varrho_j}$} \\
        1 \ \ \ & \text{on $U_{3\varrho_j}^c$}  .
    \end{array} \right.
    \end{align}
Let $\eta:\R \rightarrow \R $, $\eta \in C^{\infty}_c((0,1))$, $\eta>0$ be a decreasing mollifying kernel such that
    \[
    \int_{\R_+}\eta(t)\d t=1
    \]
and set 
    \[
    \eta_{\delta}(x):=\delta^{-n}\eta\left(\frac{|x|}{\delta}\right), \ \ \  \zeta_{\delta}(x):=(\zeta * \eta_{\delta})(x).
    \]
Let us consider now $\delta<<\varrho_j$ so small that
   \begin{align}
    \zeta_{\delta}(x)=&\left\{\begin{array}{ll}
        0 \ \ \ & \text{on $U_{\varrho_j}$} \\
        1 \ \ \ & \text{on $U_{4\varrho_j}^c$}  .
    \end{array} \right.
    \end{align}
We observe that $\zeta_{\delta}\in C^{\infty}(\R^d)$ and
    \begin{align*}
         |\nabla \zeta_{\delta}(x)|&=0 \ \ \text{on $U_{\varrho_j}\cup U_{4\varrho_j}^c$}\\
         |\nabla \zeta_{\delta}(x)|&\leq \sum_{i=1}^{k_j} |\nabla \zeta_i|* \eta_{\delta}(x) \ \ \ \text{for every $x\in \R^d$}.
    \end{align*}
Moreover, by omitting the center of the cubes,
    \[
    |\nabla \zeta_i| * \eta_{\delta} (x)=0 \ \ \text{on $Q_{\varrho_j}\cup Q_{4\varrho_j}^c$}
    \]
and for $x\in Q_{4 \varrho_j}\setminus Q_{\varrho_j}$
    \begin{align*}
        |\nabla \zeta_i| * \eta_{\delta} (x)&\leq \frac{2}{\varrho_j}.
    \end{align*}
Henceforth, we have 
    \begin{align*}
    \int_{\partial^* E} |\nabla^{E}\zeta_{\delta}(x)|\d\H^{d-1}(x) &\leq C\sum_{i=1}^{k_j}  \int_{\partial^* E } |\nabla \zeta_i | * \eta_{\delta} (x)\d \H^{d-1}(x)\\
    &= C\sum_{i=1}^{k_j}  \int_{\partial^* E\cap (Q_{4\varrho_j}(x_i)\setminus Q_{\varrho_j}(x_i))}  |\nabla \zeta_i | * \eta_{\delta} (x)\d \H^{d-1}(x)\\
       &\leq C\sum_{i=1}^{k_j} \frac{P(E;(Q_{4\varrho_j}(x_i)\setminus Q_{\varrho_j}(x_i) ))}{\varrho_j}.
    \end{align*}
We pick $x_i'\in \Sigma\cap Q_{4\varrho_j}(x_i)$, and being $Q_{4\varrho_j}(x_i)\subset B_{8d\varrho_j}(x_i')$ we use condition $a)$ to get
    \[
     P(E;(Q_{4\varrho_j}(x_i)\setminus Q_{\varrho_j}(x_i) )) \leq P(E; Q_{4\varrho_j}(x_i))\leq P(E;B_{8d\varrho_j}(x_i'))\leq C \varrho_j^{d-1},
    \] for all $i=1,\ldots,k_j$ and a constant $C$ uniform for $x\in \Sigma$. Therefore we get the estimate
    \begin{equation}\label{lalaland}
    \int_{\partial^* E} |\nabla_{\delta}^{E}\zeta(x)|\d\H^{n-1}(x)\leq C k_j\varrho_j^{d-2}\leq C\varepsilon.
    \end{equation}
Analogously we can obtain the inequality
    \begin{align}
     \int_{\partial^* E}|(1-\zeta_{\delta})|\d\H^{d-1}
     &\leq \sum_{i=1}^{k_j} \int_{\partial^* E\cap Q_{4\varrho_j}(x_i)}|(1-\zeta_{\delta})|\d\H^{d-1} \leq \sum_{i=1}^{k_j} P(E;Q_{4\varrho_j}(x_i))\nonumber\\
     &\leq \sum_{i=1}^{k_j} P(E;B_{8d\varrho_j}(x_i'))\leq C k_j \varrho_j^{d-1}\leq C\varepsilon \varrho_j \label{Cuculo}.
    \end{align}
Let $T\in C^{\infty}_c(\R^d;\R^d)$. Then, since $\zeta_{\delta} T\in C^{\infty}_c(\R^d\setminus \Sigma;\R^d)$ we have
    \begin{align*}
        \int_{\partial^* E} \dive_E (\zeta_{\delta} T)\d\H^{d-1}(x)=\int_{\partial^* E} \zeta_{\delta} (T\cdot \nu_E)H\d\H^{d-1}(x).
    \end{align*}
Now the left hand side satisfies
    \begin{align*}
        \int_{\partial^* E} \dive_E (\zeta_{\delta} T)\d\H^{d-1}(x)=&\int_{\partial^* E} (\nabla^E \zeta_{\delta}\cdot T)\d\H^{d-1}(x)+ \int_{\partial^* E} \zeta_{\delta} \dive_E(T)\d \H^{d-1}(x)\\
        =&\int_{\partial^*E} \dive_E(T)\d \H^{d-1}(x)- \int_{\partial^*E} (1-\zeta_{\delta}) \dive_E(T)\d \H^{d-1}(x)\\ 
        &+\int_{\partial^* E} (\nabla^E \zeta_{\delta}\cdot T)\d\H^{d-1}(x)
    \end{align*}
and, due to \eqref{lalaland}, \eqref{Cuculo}
    \begin{align*}
       \left| \int_{\partial^*E} (1-\zeta_{\delta}) \dive_E(T)\d \H^{d-1}(x)-\int_{\partial^* E} (\nabla^E \zeta_{\delta}\cdot T)\d\H^{d-1}(x)\right|&\leq C\varepsilon\left(1+\varrho_j\right)
    \end{align*}
for a constant $C=C(\Sigma,T,d)$. Also, still due to \eqref{Cuculo}
    \[
    \left|\int_{\partial^*E} (1-\zeta_{\delta}) (T\cdot \nu_E)H\d\H^{d-1}(x)\right|\leq C\varepsilon |H| \varrho_j.
    \]
By collecting the above estimates we infer
    \[
    \left| \int_{\partial^* E} \dive_E (T)\d\H^{n-1}(x)-\int_{\partial^*E}  (T\cdot \nu_E)H\d\H^{d-1}(x)\right|\leq C\varepsilon
    \]
for a constant $C=C(\Sigma,H,T,d)$. Being the above valid for all $\varepsilon >0$ we conclude
\[
\int_{\partial^* E} \dive_E (T)\d\H^{d-1}(x)=\int_{\partial^* E}  (T\cdot \nu_E)H\d\H^{d-1}(x)
\]
and the above can be repeated for all $T\in C^{\infty}_c(\R^d;\R^d)$. Finally it is possible to apply Theorem \ref{thm:AlexRef} and conclude that $E$ must be a finite union of balls of radius $\frac{d-1}{H}$.
\end{proof}

\subsection{Proof of Theorem \ref{thm:mainThmCONTACT}.}
We let $\alpha>0$, being the case $\alpha=0$ similar. Taking in consideration the open set $\Omega$ with regularity $\partial \Omega\in C^{1,\alpha}$, we immediately invoke Lemma \ref{lem:impReg} to deduce the regularity $\Gamma= \partial E \cap \partial \Omega \in C^{1,\alpha}$ of the contact set. Moreover, Assertion (II) of Theorem \ref{thm:reg} tells us that $\partial^* E\cap \Omega$ is an analytic hyper-surface of constant mean curvature equal to $h(\Omega)$ and that the singular closed set $\Sigma:=(\partial E\setminus \partial^*E)\cap \Omega$ has Hausdorff dimension at most $d-8$. Suppose by contradiction that 
    \[
    \H^{d-2+\alpha}(\partial E\cap \partial \Omega)=0.
    \]
Since $\Sigma \subset \Omega$ is closed, and due to the regularity of $\partial E$ close to $\partial \Omega$, we can find two open sets $U_{\partial \Omega},U_{\Omega}$ with the following properties
    \begin{itemize}
        \item[(i)] $\partial \Omega \subset U_{\partial \Omega}$, $\Omega\subset U_{\partial \Omega} \cup U_{\Omega}$;
        \item[(ii)] $\Sigma \subset U_{\Omega}$, $U_{\partial \Omega} \cap \Sigma=\emptyset$;
        \item[(iii)] $\partial E\in C^{1,\alpha}(U_{\partial \Omega})$, in particular $\partial E\cap U_{\partial \Omega}=\partial^* E\cap U_{\partial \Omega}$.
    \end{itemize} 
Summarizing, the set $\partial^* E \cap U_{\Omega}$ is an analytic hyper-surface with constant mean curvature equal to $h(\Omega)$ and $E$ has constant distributional mean-curvature on $U_{\partial \Omega}\setminus \Gamma$. By assumption $\H^{d-2+\alpha}(\Gamma)=0$, and since $\partial E\in C^{1,\alpha}(U_{\partial \Omega})$ then Theorem \ref{mainthm:local} applies and $\partial E \cap U_{\partial \Omega}$ is an analytic hyper-surface with constant mean curvature equal to $h(\Omega)$. In particular $E$ has constant mean curvature equal to $h(\Omega)$ on $(U_{\Omega}\cup U_{\partial \Omega})\setminus \Sigma$ which, since $E\subset \Omega\subset \subset U_{\Omega}\cup U_{\partial \Omega}$, is equivalent to say that $E$ has constant mean curvature equal to $h(\Omega)$ on $\R^d\setminus \Sigma$. Moreover the singular set $\Sigma\subset \partial E$ is small enough, so that Property b) of Proposition \ref{Prop:weakAlFINAL} holds. But $E$ is a Cheeger set of $\Omega$ and 
\begin{equation}\label{daichepalle}
        \H^{d-1}(\partial^* E\cap \Omega)>0.
    \end{equation} \noindent
Indeed, if otherwise $\H^{d-1}(\partial^* E\cap \Omega)=0$ then 
    \[
    \H^{d-1}(\partial^* E\cap \partial \Omega)>0 
    \]
and thus trivially
    \[
    \H^{d-2+\alpha}(\partial^* E\cap \partial \Omega)>0.
    \] \noindent 
    Hence it is possible to invoke Lemma \ref{lem:tecnico} to conclude that property $a)$ of Proposition \ref{Prop:weakAlFINAL} holds as well on $E,\Sigma$. This is enough to apply Proposition \ref{Prop:weakAlFINAL} and to conclude that $E$ must be a finite union of balls of radius $\frac{d-1}{h(\Omega)}$. Since we can move from $E$ to one of its indecomposable components (see Remark \ref{rmk:ind}), we can suppose that $E$ is a single ball of radius $ \frac{d-1}{h(\Omega)}$. Accordingly
\[
  h(\Omega)=\frac{P
  (E)}{\L^d(E)}=\frac{d\omega_d \left(\frac{d-1}{h(\Omega)}\right)^{d-1}}{\omega_d\left(\frac{d-1}{h(\Omega)}\right)^{d}}=h(\Omega)\frac{d}{d-1},
    \]
and we bump into a contradiction. This contradiction is a consequence of the fact that we exploited that $\H^{d-2+\alpha}(\partial E\cap \partial \Omega)=0$ to extend the validity of the constant mean curvature equation also on $\partial E\cap \partial \Omega$ by means of Theorem \ref{mainthm:local}.


\begin{remark}\label{rmk:Self-Cheeger}
{ \rm 
The same proof produces information about the size of the region where $E$ does not have constant mean curvature, relatively to its boundary regularity. More in detail, let $E$ be an indecomposable set of finite perimeter with $\partial E\in C^{1,\alpha}$ and define 
\begin{equation}
\mathrm{Cmc}(\partial E;H):=\left\{x\in \partial E \ \left| \ \begin{array}{c}
   \text{there exists $r=r_x>0$ such that}\\
   \text{$B_r(x)\cap \partial E$ is an analytic hyper-surface}\\ 
   \text{with constant mean curvature equal to $H$}
\end{array} \right. \right\}.
\end{equation}
Then, either $E$ is a ball or
    \begin{equation}\label{geppetto}
    \H^{d-2+\alpha}(\partial E\setminus \mathrm{Cmc}(\partial E;H))>0.
    \end{equation}
Indeed if $E$ is not a ball and we violate \eqref{geppetto}, a contradiction follows in a similar fashion to the proof of Theorem \ref{thm:mainThmCONTACT}. }
\end{remark}


\section{Proof of Theorem \ref{thm:sharpness}: building sharp examples in $2$-d}\label{ref:pfExa}
In this section we show a geometric construction of a set $\Omega\subset \R^2$ whose Cheeger set $E$ has $C^{1,\alpha}$ boundary regularity and it is such that 
\[
\dim_{\H}(\partial E\cap \partial \Omega)=\alpha, \quad \quad \H^{\alpha}(\partial E\cap \partial \Omega)<+\infty.
\]
This construction will be done in several steps by using the properties of Cantor staircase-type functions. 
\subsection{Technical tools}
We invoke the following criterion from \cite[Theorem 1.1]{leonardi2017cheeger}, suitably adapted for our purposes.
\begin{theorem}[Self-Cheeger criterion]\label{giogian1}
Let $E\subset \R^2$ be a simply connected open bounded set with Lipschitz boundary such that at any point $x\in \partial E$ there exists a ball $B_r\subset E$ of radius $ r=\frac{\L^2(E)}{P(E)}$ tangent to $x$.\newline 
Then $E$ is self-Cheeger, i.e.
    \begin{equation}
        \frac{P(E)}{\L^2(E)}=\min\left\{\left.\frac{P(F)}{\L^2(F)} \right| \ F\subseteq E \right\}.
    \end{equation}
\end{theorem}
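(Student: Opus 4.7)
The plan is to construct a calibration vector field on $E$ and apply the divergence theorem to an arbitrary competitor. Set $r := \L^2(E)/P(E)$ and $E^{-r} := \{x \in E : \dist(x, \partial E) \geq r\}$. The first observation is that the hypothesis implies the Minkowski identity $E = E^{-r} + \overline{B_r}$: each $x \in \partial E$ admits an internal tangent ball with center $c_x \in E^{-r}$ and $x \in c_x + \overline{B_r}$, while an arbitrary $x \in E$ lies on the inward normal through its nearest boundary point $y$, inside $\overline{B_r}(c_y)$.

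The core step is to produce $T \in L^\infty(E; \R^2)$ with $|T| \leq 1$ a.e., $\dive T = 1/r$ distributionally on $E$, and trace $T \cdot \nu_E = 1$ on $\partial E$. In two dimensions one foliates the outer tube $E \setminus E^{-r}$ by inward normal segments from $\partial E$: writing $x(s, t) = \gamma(s) - t\, \nu_E(\gamma(s))$ with $\gamma$ an arclength parametrization of $\partial E$ and $t \in [0, r]$, one seeks $T(x(s,t)) = f(s,t)\, \nu_E(\gamma(s))$. In these coordinates $\sqrt{|g|} = 1 + t\kappa(s)$, so the conditions $\dive T = 1/r$ and $f(s, 0) = 1$ reduce to a first-order linear ODE in $t$ along each normal fiber, whose explicit solution satisfies $|f| \leq 1$, using that $\kappa(s) \leq 1/r$ is forced by the tangent-ball hypothesis. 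The field $T$ is then extended inside $E^{-r}$ preserving both $\dive T = 1/r$ and $|T| \leq 1$, with the normal foliation providing a compatible trace across $\partial E^{-r}$. Simple connectedness of $E$ is used to guarantee that this normal foliation covers the tube globally.

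Once $T$ has been constructed, the distributional Gauss--Green formula applied to any set of finite perimeter $F \subseteq E$ yields
\[
\frac{\L^2(F)}{r} \;=\; \int_F \dive T \, \d x \;=\; \int_{\partial^* F} T \cdot \nu_F \, \d \H^1 \;\leq\; P(F),
\]
whence $P(F)/\L^2(F) \geq 1/r = P(E)/\L^2(E)$, with equality at $F = E$ thanks to the trace condition. This shows that $E$ attains the Cheeger ratio over all its subsets, i.e. $E$ is self-Cheeger. The main obstacle is the calibration construction itself, specifically the extension of $T$ across $\partial E^{-r}$ and on the skeleton of $E$, where the normal foliation breaks down and $\dive T$ must absorb concentrated contributions; it is here that the two-dimensional setting, the Lipschitz regularity of $\partial E$, and the simple connectedness of $E$ genuinely enter the argument.
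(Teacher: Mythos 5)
There is a genuine gap. First, a point of comparison: the paper does not prove this statement at all — it imports it from \cite{leonardi2017cheeger} (Theorem 1.1 there), where the argument goes through the structure theory of planar Cheeger sets (the maximal Cheeger set of a no-neck Jordan domain is $\Omega^r\oplus B_r$ with $\pi r^2=\L^2(\Omega^r)$, plus Steiner-type formulas), not through a calibration. Your reduction step is fine as far as it goes: if a field $T$ with $|T|\le 1$ and $\dive T\ge 1/r$ (distributionally on $E$) exists, then Gauss--Green gives $\L^2(F)/r\le P(F)$ for every $F\subseteq E$ of finite perimeter, and $F=E$ attains equality simply because $r=\L^2(E)/P(E)$; you do not even need to impose the trace condition, it is forced a posteriori. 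The problem is that the construction of $T$, which is the entire content of the theorem, is not carried out, and the places where it is missing are exactly where the difficulty lives.

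Concretely: (i) $\partial E$ is only Lipschitz, and the interior tangent-ball hypothesis still allows reentrant corners — indeed the Lipschitz example in Section 6 of the paper, to which this very criterion is applied, has corner points — so $\nu_E(\gamma(s))$ and $\kappa(s)$ need not exist and the normal fibration of the tube $E\setminus E^{-r}$ is not available as stated; at a corner the inward normals from the two sides leave an uncovered fan. (ii) Even where the boundary is smooth, normal fibers collide on the medial axis, which can penetrate the region $\{0<\dist(\cdot,\partial E)<r\}$ (take two overlapping discs: the skeleton reaches the reentrant corners). Truncating the normal-transport field there produces a \emph{negative} singular part of $\dive T$ concentrated on the skeleton (the field points toward the skeleton from both sides), which has the wrong sign: it cannot be ``absorbed'', and it destroys the inequality for competitors $F$ meeting the skeleton. (iii) The extension of $T$ into $E^{-r}$ with $|T|\le 1$, $\dive T=1/r$ and matching normal trace is asserted, not constructed; applying your own Gauss--Green argument to subsets $G\subseteq E^{-r}$ shows its solvability encodes an isoperimetric-type condition on $E^{-r}$, and by duality the existence of such a calibration on $E$ is essentially equivalent to $h(E)\ge 1/r$, i.e.\ to the statement being proved. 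Deferring it, as you do in the closing sentence, amounts to assuming the conclusion. Finally, the claimed identity $E=E^{-r}\oplus\overline{B_r}$ is not justified at corner points: the tangent ball furnished by the hypothesis at the nearest boundary point $y$ of an interior point $x$ need not be aligned with the direction $x-y$, so $|x-c_y|\le r$ does not follow. As it stands the proposal is a plausible plan whose central step — producing the calibration under merely Lipschitz regularity — is missing, so it does not prove the theorem.
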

\begin{definition}\label{def:NoNeck}
Let $\Omega\subset \R^2$ be a simply connected open set with $C^1$ boundary. We say that $\Omega$ has no necks of radius $r$ if for any $x,y\in  \Omega $ such that $\dist(x,\partial \Omega),\dist(y,\partial \Omega)>r$ there exists a continuous $C^1$ curve $\gamma:[0,1] \rightarrow \Omega$ joining $x$ and $y$ such that 
    \[
    B_r(\gamma(t))\subseteq \Omega \ \ \text{for all $t\in [0,1]$}.
    \]
\end{definition}

\begin{remark}\label{senzacollo}
{ \rm The property of having no necks of radius $r>0$ is equivalent to the path-connectedness of the {\it inner parallel set}
\[
\Omega^{r}:=\{x\in \Omega \ | \ \mathrm{dist}(x,\partial \Omega)> r\}.
\]}
\end{remark}

\vskip0.2cm \noindent 
Let us denote the Minkowski sum of two sets $A,B\subset \R^N$ with
    \[
    A\oplus B:=\{x+y\ | \ x\in A, \ y\in B\}=\bigcup_{x\in A} (B+x).
    \] \noindent In $\R^2$ there exists a particular characterisation of the Cheeger maximal set of those domains with no neck of radius $r>0$ (see \cite[Theorem 1.4]{leonardi2017cheeger}). 
 \begin{theorem}[Cheeger constant of a domain with no necks]\label{giogian2}
Let $\Omega$ be a simply connected open bounded set with Lipschitz boundary and having no necks of radius $r:=\frac{1}{h(\Omega)}$. Then the maximal Cheeger set is given by
    \[
    E=\Omega^r\oplus B_r=\bigcup_{x\in \Omega^{r}} B_r(x).
    \]
Moreover $r=h(\Omega)^{-1}$ is the unique positive solution to the equation
    \[
    \pi r^2=\L^2(\Omega^r).
    \]
\end{theorem}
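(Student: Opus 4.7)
The plan is to identify the maximal Cheeger set of $\Omega$ with $E := \Omega^r \oplus B_r$ through two inclusions (one by an interior-ball argument, the other by a sliding/self-Cheeger argument) and to extract the identity $\pi r^2 = \L^2(\Omega^r)$ from the resulting Steiner/Minkowski computation. Let $F_{\max}\subseteq\Omega$ be the maximal Cheeger set, whose existence is standard. By Theorem \ref{thm:reg} (II) specialised to $d=2$, $\partial F_{\max} \cap \Omega$ is an analytic curve of constant curvature $1/r$ with centres of curvature inside $F_{\max}$; a standard interior-ball argument at boundary points then yields $F_{\max} = \bigcup\{B_r(y) : B_r(y)\subseteq F_{\max}\}$. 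Since each admissible centre $y$ satisfies $B_r(y)\subseteq F_{\max}\subseteq\Omega$, one has $y\in\overline{\Omega^r}$, and hence $F_{\max}\subseteq \Omega^r \oplus B_r = E$.

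\textbf{Reverse inclusion and the Steiner equation.} The no-necks hypothesis is equivalent by Remark \ref{senzacollo} to path-connectedness of $\Omega^r$. For any $y \in \Omega^r$, a continuous path in $\Omega^r$ from $y$ to some $y_0$ with $B_r(y_0)\subseteq F_{\max}$ produces a one-parameter family of admissible balls that can be added to $F_{\max}$ without increasing the Cheeger ratio, so by maximality $B_r(y)\subseteq F_{\max}$. Thus $E \subseteq F_{\max}$, and $F_{\max} = E = \Omega^r\oplus B_r$, with in particular $P(E)/\L^2(E) = 1/r$. The Steiner/Minkowski identities
\[
\L^2(E) = \L^2(\Omega^r) + r\,P(\Omega^r) + \pi r^2, \qquad P(E) = P(\Omega^r) + 2\pi r,
\]
substituted into $\L^2(E) = r\,P(E)$, simplify (cancelling $r\,P(\Omega^r)$) to $\pi r^2 = \L^2(\Omega^r)$. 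Uniqueness of $r>0$ solving this equation follows from the strict monotonicity of $\rho\mapsto \L^2(\Omega^\rho) - \pi \rho^2$, which decreases from $\L^2(\Omega)>0$ at $\rho=0^+$ to $-\infty$ as $\rho\to+\infty$.

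The hard part is the reverse inclusion, specifically the sliding argument showing that adding an admissible ball $B_r(\gamma(t))\subseteq\Omega$ to a Cheeger set of $\Omega$ does not increase its Cheeger ratio: this requires fine control of the perimeter and area gains along the path, and the path-connectedness of $\Omega^r$ is essential. A secondary technical obstacle is justifying the Steiner identities when $\Omega^r$ is neither convex nor of positive reach: one has to decompose $\partial E$ into a countable family of $C^{1,1}$ circular arcs of radius exactly $r$ (which the no-necks hypothesis makes possible) and apply a Gauss--Bonnet/turning-angle argument on each connected component of $\partial E$.
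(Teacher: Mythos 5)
First, a point of reference: the paper does not prove this statement at all — it is imported verbatim from \cite[Theorem 1.4]{leonardi2017cheeger}, together with the Steiner-type identities \eqref{steiner1}--\eqref{steiner2}. So your proposal is effectively an attempt to reprove that external theorem, and while your global strategy (interior-ball structure of the maximal Cheeger set, propagation of admissible balls along paths in $\Omega^r$ using the no-neck hypothesis, then the Steiner computation giving $\pi r^2=\L^2(\Omega^r)$) is the same as the known one, the two load-bearing steps are asserted rather than proved, and one of them is misstated.

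Concretely: (1) the claim that the maximal Cheeger set $F_{\max}$ satisfies the interior ball condition of radius $r=1/h(\Omega)$ at \emph{every} boundary point is not a ``standard interior-ball argument''. On the free boundary, constant curvature $1/r$ of $\partial F_{\max}\cap\Omega$ is a local statement and does not by itself place the full tangent disk inside $F_{\max}$; on the contact set $\partial F_{\max}\cap\partial\Omega$ the obstacle is merely Lipschitz and nothing local forces a tangent $r$-disk to fit inside $\Omega$ at all (in domains with necks this genuinely fails). Establishing this property is precisely where the fine structure theory of planar Cheeger sets (arcs of curvature $h(\Omega)$ meeting $\partial\Omega$ tangentially, bounds on their length) and the no-neck hypothesis enter in \cite{leonardi2017cheeger}. (2) The sliding step is not merely ``hard'': the mechanism you invoke is false as stated. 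Adding an admissible ball $B_r(y)\subseteq\Omega$ to a Cheeger set $E$ can strictly increase the ratio: if $B_r(y)$ overlaps $E$ only in a small set, then
\[
\frac{P(E\cup B_r(y))}{\L^2(E\cup B_r(y))}\;\approx\;\frac{h(\Omega)\,\L^2(E)+2\pi r}{\L^2(E)+\pi r^2}\;>\;h(\Omega),
\]
since $2\pi r>h(\Omega)\pi r^2=\pi r$ (a single $r$-ball has ratio $2/r$, twice the Cheeger constant). Even when consecutive balls along the path overlap in all but a thin lune, the naive bound on the new perimeter contributed by $\partial B_r(\gamma(t))\setminus E$ is of order $\pi r$, not of order $|B_r(\gamma(t))\setminus E|/r$, so the openness of $\{y\in\Omega^r:\ B_r(y)\subseteq F_{\max}\}$ cannot be obtained this way; one again needs the structural description of $\partial F_{\max}$ near the sliding ball. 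Since you explicitly defer both of these points, the proposal is an outline of the strategy of \cite{leonardi2017cheeger} rather than a proof; given that the paper itself treats the result as a black box, the honest options are either to cite it, or to supply the structure theorem and the correct ball-propagation lemma in full. (Two smaller issues: $B_r(y)\subseteq\Omega$ only gives $\dist(y,\partial\Omega)\ge r$, which need not place $y$ in $\overline{\Omega^r}$ as defined with strict inequality; and the Steiner formulas for the non-convex $\Omega^r$, which you flag, are also imported in the paper rather than proved.)
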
 \noindent 
Let $\Omega \subset \R^2$ be a simply connected domain with Lipschitz boundary and no necks of radius $r$, and let us denote with $\mathcal{M}_0(\Omega^r)$ the Minkowski content of $\partial \Omega^r$, that in this case is finite. The following extension of Steiner formulas used in \cite{leonardi2017cheeger} will be an important tool for our construction:
\begin{align}
    \L^2(\Omega^r\oplus B_r)&=\L^2(\Omega^r)+\mathcal{M}_0(\Omega^r)r +\pi r^2\label{steiner1}\\
    P(\Omega^r\oplus B_r)&=\mathcal{M}_0(\Omega^r) +2\pi r\label{steiner2}.
    \end{align}
\subsubsection{Cantor sets and Cantor staircase properties} \label{cantor}
Let $\tau\in (0,1)$, and let us set 
\[\mathcal{C}_1(\tau) = [0,1]\setminus \left(\frac{1-\tau}{2},\frac{1+\tau}{2}\right).\]\noindent  Then we define $\mathcal{C}_n(\tau)$ as the set obtained from $\mathcal{C}_{n-1}(\tau)$ by removing, on  each of its connected components, the central interval of length $\frac{\tau (1-\tau)^{n-1}}{2^{n-1} }$. We observe that each set $ \mathcal{C}_{n-1}(\tau)$ is made by $2^{n-1}$ disjoint intervals and therefore its length is
    \begin{align*}
    \L^1(\mathcal{C}_n(\tau))&=\L^1(\mathcal{C}_{n-1}(\tau))- \tau (1-\tau)^{n-1}=\L^1(\mathcal{C}_{1}(\tau))-\tau\sum_{i=1}^{n-1}(1-\tau)^{i}
    =(1-\tau)^{n}.
      \end{align*}
The Hausdorff dimension of the limiting Cantor-type set $\C(\tau):=\bigcap_{n=1}^{+\infty} \C_n(\tau)$ is precisely 
    \begin{equation}\label{alf}
    \dim_{\H} (\mathcal{C}(\tau))=\alpha(\tau), \quad \quad \quad \alpha(\tau)=\frac{\log(2)}{\log\left(\frac{2}{1-\tau}\right)}.
   \end{equation}
We refer to \cite{jones2001lebesgue} for details on generalized Cantor functions and to \cite{falconer2004fractal}, \cite{Falconer3} for general theory of fractals. In particular, by varying $\tau\in (0,1)$ we can reach all $\alpha\in (0,1)$.  
We consider the function
    \[
    s_{n,\tau}(t):= \frac{\mathcal{L}^1\left(\mathcal{C}_n(\tau) \cap [0,t] \right)}{(1-\tau)^n}   = \frac{1}{(1-\tau)^n}\int_0^t \ca_{\mathcal{C}_n(\tau)}(r)\d r.
    \]
It is a well-known fact that $s_{n,\tau}$ uniformly converges on $[0,1]$ to a Cantor-type staircase function $s_{\tau}$ which is $C^{0,\alpha}([0,1])\cap C^{\infty}((0,1)\setminus \C(\tau))$ for $\alpha$ identified by \eqref{alf}. We collect here some elementary properties of $s_{\tau}$. More precisely, for fixed $H,\ell \in \R_+$ we consider
    \begin{align*}
        s_{\tau}(t;H,\ell):= H\ell s_{\tau}(\sfrac{t}{\ell}).
    \end{align*}
    \begin{lemma}\label{lem:propSn}
    For any $H,\ell\in \R_+$, $\tau\in (0,1)$ it holds 
    \begin{itemize}
        \item[a)] $s_{\tau}(0;H,\ell)=0$,   $s_{\tau}(\sfrac{\ell}{2};H,\ell)=\frac{H\ell}{2}$, $s_{\tau}(\ell)=H\ell$;
        \item[b)] $s_{\tau}(t;H,\ell)> H t$ for all $t\in (0,\sfrac{\ell}{2})$, $s_{\tau}(t;H,\ell)< H t$ for all $t\in (\sfrac{\ell}{2}, \ell)$ ;
        \item[c)] $s_{\tau}(t;H,\ell)= H\ell - s_{\tau}(\ell-t;H,\ell)$ for all $t\in (0,\sfrac{\ell}{2})$;
        \item[d)] $s_{\tau}(\cdot;H,\ell)\in C^{0,\alpha}((0,\ell)\cap C^{\infty}((0,\ell)\setminus \ell \C(\tau))$ being $\alpha=\alpha(\tau)$ defined as in \eqref{alf};
        \item[e)] $s_{\tau}'(t;H,\ell)=0$ on $C^{\infty}((0,\ell)\setminus \ell \C(\tau))$;
        \item[f)] $|s_{\tau}(t;H,\ell)-Ht|<\frac{H\ell}{2} $.
    \end{itemize}
\end{lemma}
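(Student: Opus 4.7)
The plan is to reduce all six claims to the canonical case $H=\ell=1$ via the scaling identity $s_\tau(t;H,\ell)=H\ell\, s_\tau(t/\ell)$, and then combine three ingredients: the symmetry of $\mathcal{C}_n(\tau)$ about the midpoint, the self-similarity of the limit function $s_\tau$, and the uniform convergence $s_{n,\tau}\to s_\tau$ on $[0,1]$ (with monotonicity preserved in the limit).

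Items $(a)$ and $(c)$ already hold at the pre-limit level: the definition gives $s_{n,\tau}(0)=0$ and $s_{n,\tau}(1)=\mathcal{L}^1(\mathcal{C}_n(\tau))/(1-\tau)^n=1$, while the symmetry of $\mathcal{C}_n(\tau)$ with respect to $1/2$, inherited from the centered construction, yields both $s_{n,\tau}(1/2)=1/2$ and the reflection $s_{n,\tau}(1-t)=1-s_{n,\tau}(t)$. Uniform convergence in $n$ together with the scaling identity transfers these relations to $s_\tau(\,\cdot\,;H,\ell)$.

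For $(d)$ and $(e)$, I would observe that every connected component of $[0,\ell]\setminus(\ell\,\mathcal{C}(\tau))$ is one of the open middle intervals removed at some finite step $n_0$, on which $\ca_{\mathcal{C}_n(\tau)}$ vanishes for every $n\ge n_0$; hence $s_{n,\tau}$ is already constant there, and its uniform limit is locally constant on $(0,\ell)\setminus(\ell\,\mathcal{C}(\tau))$, giving $(e)$ and the $C^\infty$ part of $(d)$. The global $C^{0,\alpha}$ regularity with $\alpha$ defined by \eqref{alf} is the classical Cantor-staircase estimate: the self-similarity $s_\tau(t)=\tfrac{1}{2} s_\tau(2t/(1-\tau))$ on $[0,(1-\tau)/2]$ gives by iteration $s_\tau((1-\tau)^n/2^n)=2^{-n}=((1-\tau)^n/2^n)^\alpha$, and monotonicity together with a dyadic comparison upgrades this to the full H\"older bound on arbitrary pairs of points.

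For $(b)$, on the plateau $[(1-\tau)\ell/2,\ell/2)$ one has $s_\tau(t;H,\ell)\equiv H\ell/2>Ht$ directly, while on $(0,(1-\tau)\ell/2)$ the self-similarity reduces the claim $s_\tau(t;H,\ell)>Ht$ to the canonical inequality $s_\tau(u)>(1-\tau)u$ with $u=2t/(1-\tau)\in(0,1)$. An inductive check on the piecewise-linear approximants $s_{n,\tau}$ gives $s_{n,\tau}\ge\mathrm{id}$ on $[0,1/2]$, which passes to the limit, and combined with $\tau>0$ it produces the strict inequality $s_\tau(u)>(1-\tau)u$ on $(0,1/2]$; the middle range $u\in[1/2,(1+\tau)/2]$ is covered by the plateau value $s_\tau\equiv 1/2$, since $(1-\tau)u\le(1-\tau^2)/2<1/2$, and the upper range $u\in[(1+\tau)/2,1]$ is reduced by $(c)$ to an auxiliary inequality $s_\tau(v)<\tau+(1-\tau)v$ on $[0,(1-\tau)/2]$, which is handled by iterating the self-similarity together with the sub-linear H\"older profile from $(d)$. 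The mirror statement on $(\ell/2,\ell)$ follows from $(c)$. Finally, $(f)$ is a consequence of $(a)$ and monotonicity alone: for $t\in[0,\ell/2]$ both $s_\tau(t;H,\ell)$ and $Ht$ lie in $[0,H\ell/2]$, and the extremal differences $\pm H\ell/2$ would force either $t=0$ with $s_\tau(0;H,\ell)=H\ell/2$ or $t=\ell/2$ with $s_\tau(\ell/2;H,\ell)=0$, both contradicting $(a)$; the range $t\in(\ell/2,\ell)$ is recovered through $(c)$.

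The principal obstacle I anticipate is making the strict inequality in $(b)$ fully rigorous on the range $u\in((1+\tau)/2,1)$, where the reduction via $(c)$ yields the auxiliary comparison $s_\tau(v)<\tau+(1-\tau)v$ on $[0,(1-\tau)/2]$; this is strict at both endpoints (by $s_\tau(0)=0<\tau$ and $s_\tau((1-\tau)/2)=1/2<(1+\tau^2)/2$), and closing the intermediate range requires iterating the self-similarity and exploiting the Cantor structure of $s_\tau$ together with the sub-linear bound $s_\tau(v)\lesssim v^\alpha$ near the origin to rule out any crossing of the linear majorant.
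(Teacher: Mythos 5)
The paper states Lemma \ref{lem:propSn} without proof (it is presented as a list of elementary Cantor--staircase facts, with references), so the only question is whether your argument is complete. Parts a), c), d), e), f) are handled correctly, and your reduction of b) via scaling, the plateau, and the reflection identity c) is the right skeleton. The genuine gap is the one you yourself flag: on the range $u\in\left(\frac{1+\tau}{2},1\right)$ the claim $s_\tau(u)>(1-\tau)u$ is equivalent, via c), to the upper affine bound $s_\tau(v)<\tau+(1-\tau)v$ on $\left(0,\frac{1-\tau}{2}\right]$, and checking the two endpoints does not settle it. The tool you propose to close it, the sub-linear profile $s_\tau(v)\lesssim v^{\alpha}$, is not sufficient as stated: a concave power function may lie strictly above an affine function between two points where it lies below, and here the comparison $v^{\alpha}$ versus $\tau+(1-\tau)v$ is extremely tight (both sides are equal to $\tfrac12$ at $v=\frac{1-\tau}{2}$, and as $\tau\to0$ the margin degenerates), so ``rule out any crossing'' is precisely the nontrivial step and it is not supplied.

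The gap closes with a short induction that avoids the H\"older profile altogether. By the level-$n$ reflection symmetry $s_{n,\tau}(1-t)=1-s_{n,\tau}(t)$, the upper bound $s_{n,\tau}(v)\le\tau+(1-\tau)v$ on $[0,1]$ is \emph{equivalent} to the lower bound $s_{n,\tau}(u)\ge(1-\tau)u$ on $[0,1]$; prove the latter for all $n$ by induction using the exact self-similarity $s_{n,\tau}(t)=\tfrac12\,s_{n-1,\tau}\!\left(\frac{2t}{1-\tau}\right)$ on $\left[0,\frac{1-\tau}{2}\right]$. On the left piece the inductive lower bound gives $s_{n,\tau}(u)\ge u\ge(1-\tau)u$; on the middle gap $s_{n,\tau}\equiv\tfrac12\ge(1-\tau)u$ because $(1-\tau)\frac{1+\tau}{2}=\frac{1-\tau^2}{2}<\tfrac12$; on $\left[\frac{1+\tau}{2},1\right]$ reflect and use the inductive \emph{upper} bound to get, for $w=1-u\le\frac{1-\tau}{2}$, $s_{n,\tau}(w)\le\frac{\tau}{2}+w\le\tau+(1-\tau)w$ since $\tau w\le\frac{\tau}{2}$. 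Passing to the limit yields $(1-\tau)u\le s_\tau(u)\le\tau+(1-\tau)u$ on $[0,1]$, and the strict inequalities of b) then follow exactly by your case analysis (on $\left(0,\frac{1-\tau}{2}\right]$ use $s_\tau(u)\ge u>(1-\tau)u$ with $\tau>0$; on the plateau and on the reflected piece the inequalities above are already strict because $w<\tfrac12$). With this insertion your proof of the Lemma is complete.
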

\noindent 
From now on we will consider only those values $H\ell<2$. Let $\tau\in (0,1)$, define the function
    \begin{equation}\label{e:functionu}
   u_{\tau}(t;H,\ell):=\int_0^t \frac{( s_{\tau}(r;H,\ell)- H r)}{\sqrt{1-(s_{\tau}(r;H,\ell)-H r)^2}} \d r.
    \end{equation}

\begin{remark}\label{rmk:ValuesOfUn}
{\rm  By easy manipulations we observe that
    \[
    \frac{u_{\tau}'(t;H,\ell)}{\sqrt{1+(u_{\tau}'(t;H,\ell))^2}} = s_{\tau}(t;H,\ell) - H t 
    \]
and 
    \[
   \sqrt{1+ (u_{\tau}'(t;H,\ell))^2} = \frac{1}{\sqrt{1 - ( s_{\tau}(t;H,\ell) - H t)^2} }.
    \]
    }
\end{remark}
   
\begin{figure}
    \centering
    \includegraphics[scale=0.7]{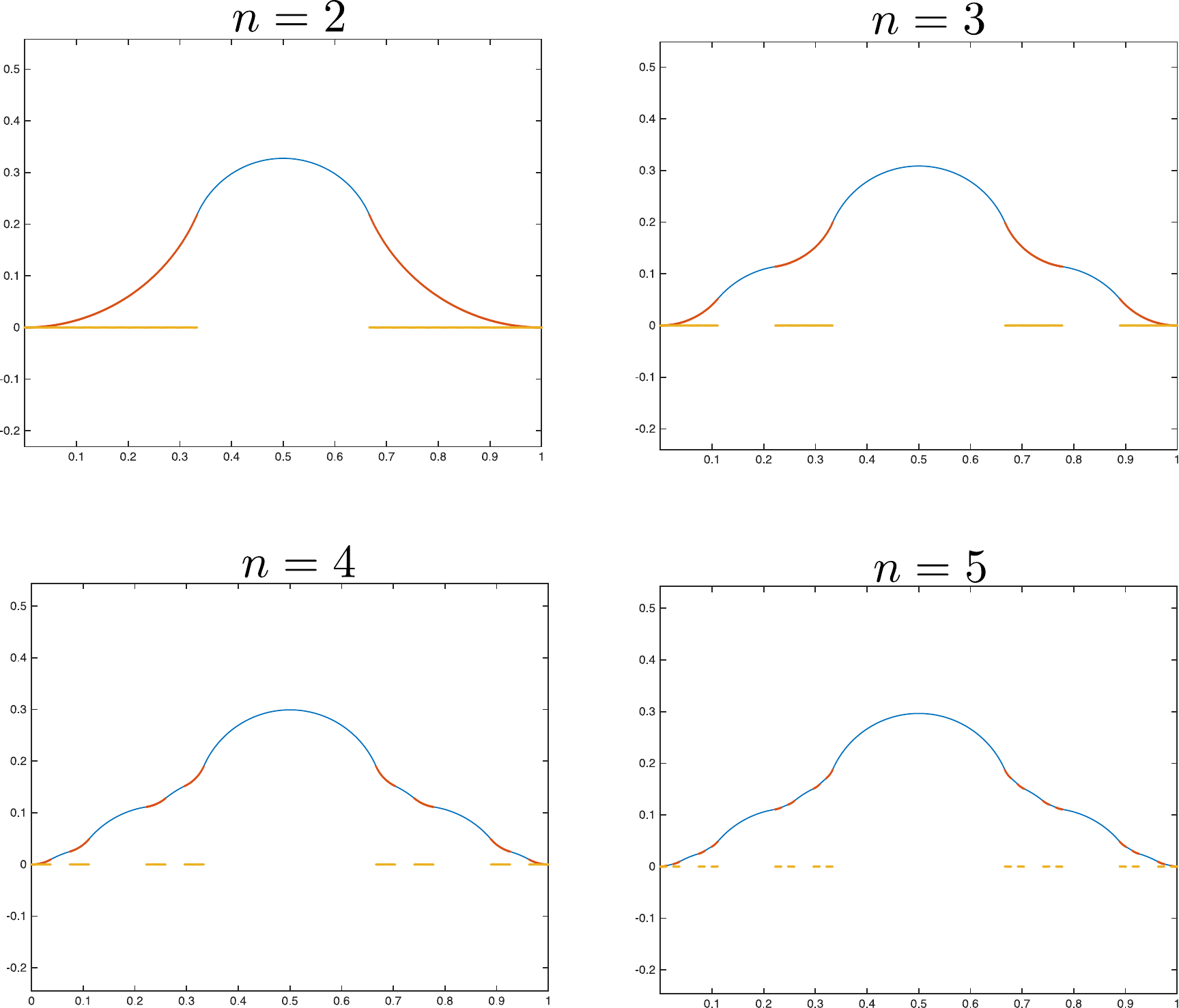}
    \caption{Here are represented the first iterations $u_{n,\tau}$ of the function $u_{\tau}$ for the case $\tau=1/3$. They are defined as the $u_{\tau}$ but with $s_{n,\tau}$ in place of $s_{\tau}$. The Cantor-type set is depicted in yellow while the blue part is the region of constant mean curvature. The orange part lying above the Cantor-type set is the region where the function fails to solve the constant mean curvature ODE. For the case $\tau=1/3$ the function $u_{\tau}$ is well defined up to $H\ell<6$ and, since for small value of $H\ell$ the oscillatory effect is not quite visible, in order to magnify the behaviour of $u_{\tau}$ the parameter has been set to be $H=5.5$, $\ell=1$.}
    \label{grafic}
\end{figure} \noindent 
In the following Lemma we state some properties of $u_{\tau}$ that can be derived from the properties of $s_{\tau}$, and we refer to Figure \ref{grafic} where few iterations are depicted.
    \begin{lemma}\label{lem:propofUn}
    For any $H\ell<2$, $\tau\in (0,1)$ the function $u_{\tau}(\cdot;H,\ell)$ satisfies the following properties:
        \begin{itemize}
            \item[a)] $u_{\tau}(0;H,\ell)=u_{\tau}(\ell;H,\ell)=u_{\tau}'(0;H,\ell)=u_{\tau}'(\ell;H,\ell)=0$;
            \item[b)] $u_{\tau}(t;H,\ell)>0$ for all $t\in (0,\ell)$;
            \item[c)] $u_{\tau}\left(\frac{\ell}{2}-t;H,\ell\right)=u_{\tau}\left(\frac{\ell}{2}+t;H,\ell\right)$ for all $t\in (0,\sfrac{\ell}{2})$;
            \item[d)] $u_{\tau}(\cdot;H,\ell)\in C^{1,\alpha}((0,\ell))\cap C^{\infty}((0,\ell)\setminus \ell \C(\tau))$ where $\alpha=\alpha(\tau)$ is defined as in \eqref{alf};
            \item[e)] $\displaystyle -\left(\frac{u_{\tau}'(t;H,\ell)}{\sqrt{1+(u_{\tau}'(t;H,\ell))^2}}\right)'=H$ for all  $t\in (0,\ell)\setminus \ell \C(\tau)$.
        \end{itemize}
    \end{lemma}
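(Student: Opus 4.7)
The plan is to read off all six properties of $u_{\tau}(\cdot;H,\ell)$ directly from the integral representation \eqref{e:functionu}, exploiting the properties of $s_{\tau}$ listed in Lemma \ref{lem:propSn} together with the two identities recorded in Remark \ref{rmk:ValuesOfUn}. The key preliminary observation is that, since $H\ell<2$, property (f) of Lemma \ref{lem:propSn} yields $|s_{\tau}(r;H,\ell)-Hr|<\tfrac{H\ell}{2}<1$ on $[0,\ell]$, so the denominator $\sqrt{1-(s_{\tau}(r;H,\ell)-Hr)^2}$ in the integrand is bounded below away from zero. Consequently $u_{\tau}$ is well-defined, continuous, and its derivative $u'_{\tau}(t;H,\ell)=(s_{\tau}(t;H,\ell)-Ht)/\sqrt{1-(s_{\tau}(t;H,\ell)-Ht)^2}$ exists in the classical sense wherever $s_{\tau}(\cdot;H,\ell)$ is continuous.

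For items (a)--(c) I would use the symmetry statement (c) of Lemma \ref{lem:propSn}. Evaluating at $t=0$ and $t=\ell$ using $s_{\tau}(0;H,\ell)=0$ and $s_{\tau}(\ell;H,\ell)=H\ell$ yields $u'_{\tau}(0;H,\ell)=u'_{\tau}(\ell;H,\ell)=0$, and trivially $u_{\tau}(0;H,\ell)=0$. The substitution $r\mapsto \ell-r$ combined with the antisymmetry of $r\mapsto s_{\tau}(r;H,\ell)-Hr$ about $\ell/2$ shows that the integrand is odd around $\ell/2$, hence $\int_0^\ell =0$, which gives both $u_{\tau}(\ell;H,\ell)=0$ and, splitting the integral, the reflection identity (c). Item (b) is then immediate: by the sign property (b) of Lemma \ref{lem:propSn} the integrand is strictly positive on $(0,\ell/2)$ and strictly negative on $(\ell/2,\ell)$, so $u_{\tau}(\cdot;H,\ell)$ is strictly increasing on $[0,\ell/2]$ and strictly decreasing on $[\ell/2,\ell]$ with the same boundary value $0$, giving strict positivity in the interior.

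For item (d) I would factor the argument into Hölder regularity of $u_{\tau}'$ on the full interval and smoothness on the complement of the Cantor set. By (d) of Lemma \ref{lem:propSn}, the map $t\mapsto s_{\tau}(t;H,\ell)-Ht$ lies in $C^{0,\alpha}([0,\ell])\cap C^{\infty}((0,\ell)\setminus \ell\C(\tau))$, and since it takes values in the compact interval $(-1,1)$, post-composition with the smooth function $x\mapsto x/\sqrt{1-x^2}$ preserves both regularities; the resulting expression is exactly $u'_{\tau}(\cdot;H,\ell)$ by Remark \ref{rmk:ValuesOfUn}, which upgrades $u_{\tau}$ to $C^{1,\alpha}((0,\ell))\cap C^{\infty}((0,\ell)\setminus \ell\C(\tau))$. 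Finally, item (e) is a one-line computation on the complement: differentiating $u'_{\tau}/\sqrt{1+(u'_{\tau})^2}=s_{\tau}(t;H,\ell)-Ht$ and using property (e) of Lemma \ref{lem:propSn} that $s'_{\tau}(t;H,\ell)=0$ on $(0,\ell)\setminus \ell\C(\tau)$ gives $-\bigl(u'_{\tau}/\sqrt{1+(u'_{\tau})^2}\bigr)'=H$ pointwise there.

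The only delicate point is item (d): one has to make sure that the Hölder exponent $\alpha=\alpha(\tau)$ of $s_{\tau}$ is preserved and not degraded when composing with $x\mapsto x/\sqrt{1-x^2}$. This is where the uniform bound $|s_{\tau}-Ht|\leq H\ell/2<1$ from property (f) becomes essential, because it confines the composition away from the singularity $|x|=1$ of the composing function, so that the latter is Lipschitz on the relevant range and Hölder continuity is transferred with the same exponent and a controlled constant.
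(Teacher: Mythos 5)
Your proposal is correct and follows essentially the same route as the paper: both rest on the identities of Remark \ref{rmk:ValuesOfUn} and the properties of $s_{\tau}$ from Lemma \ref{lem:propSn}, with the symmetry (c) coming from the antisymmetry of $t\mapsto s_{\tau}(t;H,\ell)-Ht$ about $\ell/2$ (the paper phrases this as $P'(t)=0$ for $P(t)=u_{\tau}(\ell/2+t)-u_{\tau}(\ell/2-t)$, you as oddness of the integrand under $r\mapsto\ell-r$, which is the same computation). You simply make explicit what the paper dismisses as immediate, in particular the transfer of the $C^{0,\alpha}$ exponent through the map $x\mapsto x/\sqrt{1-x^2}$, which is legitimate precisely because of the bound $|s_{\tau}-Ht|\leq H\ell/2<1$ keeping the composition in a compact subset of $(-1,1)$ where that map is Lipschitz.
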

\begin{proof}
Clearly properties d), e) come immediately from Remark \ref{rmk:ValuesOfUn} and from the properties of $s_{\tau}$. To complete the proof we see that it is enough to prove the simmetry relation c), from which a), b) (combined with properties a), b) of Lemma \ref{lem:propSn}) will follow.\newline 
Indeed, by property c) of Lemma \ref{lem:propSn} we have 
\begin{equation*} 
    s_{\tau}(t;H,\ell)-H t =H(\ell-t)-s_{\tau}(\ell-t;H,\ell).
\end{equation*}
which implies
    \begin{equation}\label{eqn:symmetry}
    s_{\tau}\left(\frac{\ell}{2}-t;H,\ell\right)-H\left(\frac{\ell}{2}-t\right)=-\left[s_{\tau}\left(\frac{\ell}{2}+t;H,\ell\right)-H\left(\frac{\ell}{2}+t\right)\right].
    \end{equation}
Hence, setting
    \[
    P(t)=u_{\tau}\left(\frac{\ell}{2}+t;H,\ell\right)-u_{\tau}\left(\frac{\ell}{2}-t;H,\ell\right)
    \]
we have $P'(t)=0$ which gives the desired symmetry on $u_{\tau}$.
\end{proof} \noindent 
Some additional properties of $u_{\tau}$ required to run the construction are contained in the following Lemma.
 \begin{lemma}\label{lem:inequality}
  For any $H\ell<2$, $\tau\in (0,1)$ it holds that         \[
          u_{\tau}(t;H,\ell)-\frac{1}{H} \sqrt{1-(s_{\tau}(t;H,\ell)-Ht)^2}\leq u_{\tau}(r;H,\ell) -\frac{1}{H}\sqrt{1 - \left(s_{\tau}(t;H,\ell)-H r \right)^2} 
        \]
   for all $t\in [0,\ell]$ and for any $r\in (0,\ell)\cap  H^{-1}(s_{\tau}(t;H,\ell) -1,s_{\tau}(t;H,\ell) +1)$. 
    \end{lemma}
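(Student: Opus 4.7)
The plan is to fix $t\in[0,\ell]$ and view the right-hand side as a function of $r$, showing that it attains its minimum over the admissible $r$-interval precisely at $r=t$. Concretely, I would define
\[
F(r) := u_\tau(r;H,\ell) - \frac{1}{H}\sqrt{1-(s_\tau(t;H,\ell)-Hr)^2},
\]
defined on the open interval $I_t := (0,\ell)\cap H^{-1}(s_\tau(t;H,\ell)-1,\,s_\tau(t;H,\ell)+1)$ that appears in the statement. By property (f) of Lemma \ref{lem:propSn} (which gives $|s_\tau(\cdot;H,\ell)-H(\cdot)|<H\ell/2<1$) the point $t$ itself lies in $I_t$, so the inequality to be proved is simply $F(t)\le F(r)$ for every $r\in I_t$.

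Next I would differentiate. Using the explicit derivative of $u_\tau$ from Remark \ref{rmk:ValuesOfUn} and a direct chain-rule computation on the square-root term, one obtains on $I_t\setminus \ell\C(\tau)$
\[
F'(r) \;=\; \frac{s_\tau(r;H,\ell)-Hr}{\sqrt{1-(s_\tau(r;H,\ell)-Hr)^2}} \;-\; \frac{s_\tau(t;H,\ell)-Hr}{\sqrt{1-(s_\tau(t;H,\ell)-Hr)^2}}.
\]
In particular $F'(t)=0$. Introducing the auxiliary function $g(x):=x/\sqrt{1-x^2}$, which is strictly increasing on $(-1,1)$, this can be rewritten as
\[
F'(r) \;=\; g\bigl(s_\tau(r;H,\ell)-Hr\bigr) - g\bigl(s_\tau(t;H,\ell)-Hr\bigr).
\]

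Now comes the core monotonicity argument. Since each approximant $s_{n,\tau}(\cdot;H,\ell)$ is non-decreasing by construction (see Subsection \ref{cantor}), so is the uniform limit $s_\tau(\cdot;H,\ell)$. Combined with the strict monotonicity of $g$, this yields
\[
\begin{cases}
F'(r)\le 0 & \text{if } r\le t,\\
F'(r)\ge 0 & \text{if } r\ge t,
\end{cases}
\]
holding almost everywhere on $I_t$. Because $F$ is continuous on $I_t$ (indeed $F\in C^{0,\alpha}$ by property (d) of Lemma \ref{lem:propofUn}) and $F'$ has the stated sign off the negligible set $\ell\C(\tau)$, an absolutely-continuous integration gives that $F$ is non-increasing on $I_t\cap(0,t]$ and non-decreasing on $I_t\cap[t,\ell)$. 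Hence $r=t$ is a global minimum of $F$ on $I_t$, which is exactly the inequality in the lemma.

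I do not expect any serious obstacle. The only delicate point is verifying that differentiation is legitimate across $\ell\C(\tau)$: $u_\tau(\cdot;H,\ell)$ is only $C^{1,\alpha}$ at these points, not $C^\infty$. This is handled by noting that the a.e. sign of $F'$ together with absolute continuity of $F$ (which follows from $F\in C^{1,\alpha}(I_t)$ away from the countable sum formula, and more directly from the fact that both $u_\tau$ and the square-root term are compositions of Lipschitz/Hölder maps with $s_\tau$) is enough to control monotonicity; the Cantor set $\ell\C(\tau)$ being $\H^1$-null makes the sign information transfer to the integral without loss.
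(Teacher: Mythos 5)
Your proof is correct and follows essentially the same route as the paper: there one sets $P_t(r)=u_\tau(r)-\frac{1}{H}\sqrt{1-(s_\tau(t)-Hr)^2}$, computes exactly your derivative, notes $P_t'(t)=0$, and uses the monotonicity of $s_\tau$ together with that of $z\mapsto z/\sqrt{1-z^2}$ to get $P_t'\le 0$ for $r\le t$ and $P_t'\ge 0$ for $r\ge t$, so that $r=t$ is a global minimum. Your closing worry about differentiating across $\ell\C(\tau)$ is unnecessary, since $u_\tau\in C^{1}((0,\ell))$ (Lemma \ref{lem:propofUn}(d) and Remark \ref{rmk:ValuesOfUn}: it is the integral of a continuous integrand) and the square-root term is smooth in $r$ for fixed $t$, so $F$ is $C^1$ on the whole admissible interval and the derivative formula holds everywhere — note in passing that your parenthetical justification of absolute continuity via ``compositions of H\"older maps with $s_\tau$'' would not stand on its own (the staircase $s_\tau$ is such a composition and is not absolutely continuous), but this does not affect the argument since $F$ is genuinely $C^1$ there.
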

\begin{proof}
We omit to specify $H,\ell$ in the argument of $u_{\tau},s_{\tau}$ for the sake of shortness. Notice that the function
\[
P_t(r):=u_{\tau}(r) -\frac{1}{H}\sqrt{1 - \left(s_{\tau}(t)-H r \right)^2} 
\]
is $C^1((0,\ell))$ and
    \[
    P_t'(r)=\frac{\left(s_{\tau}(r)-H r \right)}{\sqrt{1 - \left(s_{\tau}(r)-Hr \right)^2}}-\frac{(s_{\tau}(t)-H r)}{\sqrt{1 - \left(s_{\tau}(t)-Hr \right)^2} }.
    \]
So $P_t'(t)=0$. Moreover, for $r\in ( H^{-1}s_{\tau}(t)-H^{-1},t)$  we have
    \[
    1> s_{\tau}(t)-Hr >s_{\tau}(r)-Hr
    \]
since $s_{\tau}$ is a non decreasing function and $r<t$. Notice that the function
    \[
    z\mapsto \frac{z}{\sqrt{1-z^2}}
    \]
is also non decreasing and thus
    \begin{align*}
    P_t'(r)&\leq  0 \ \ \ \text{on $r\leq t$}.
    \end{align*}
Analogous computation yields also that $P_t'(r)\geq 0$ on $r\geq t$, yielding that $r=t$ is a point of global minimum for $P$ and finishing the proof.
\end{proof}
Next two Propositions are crucial for the construction.
\begin{figure}
    \centering
    \includegraphics[scale=1.5]{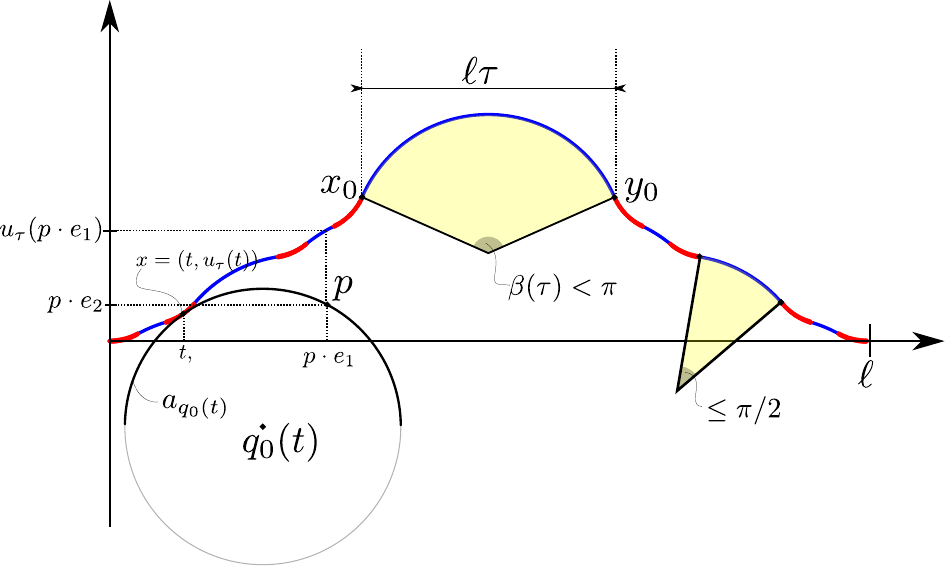}
    \caption{In this picture we represent the analysis of the profile $u_{\tau}$ which has been developed in Proposition \ref{propo:solutionCounter} and Proposition \ref{prop:ontheangle}. We still write $u_{n,\tau}$ in place of $u_{\tau}$ with $n=4$, $\tau=1/3$. The red lines represent the region where $u_{4,1/3}$ fails to solve the constant mean curvature ODE. All the circles depicted have radii $1/H$.}
    \label{fig:PROFILE}
\end{figure}
    \begin{proposition}\label{propo:solutionCounter}
 For any $H\ell<2$, $\tau\in (0,1)$ the function $u_{\tau}(\cdot;H,\ell)$ satisfies the following property: for any $x\in \{(t,u_{\tau}(t;H,\ell))\ | \ t\in (0,\ell)\}$ there exists a unique ball $B_{1/H}$ tangent to $x$ and entirely contained in the epigraph of $u_{\tau}(\cdot;H,\ell)$. 
    \end{proposition}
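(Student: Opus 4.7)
The approach is to identify the unique candidate ball by computing the normal direction to the graph of $u_\tau$ at $(t,u_\tau(t))$, to verify its containment in the epigraph via Lemma~\ref{lem:inequality}, and to eliminate the reflected candidate through a tangent line argument.

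First I would identify the two geometric candidates. Since $u_\tau \in C^{1,\alpha}((0,\ell))$ by Lemma~\ref{lem:propofUn}(d), the graph has a well-defined unit normal at $x=(t,u_\tau(t))$. Remark~\ref{rmk:ValuesOfUn} gives $u_\tau'(t)/\sqrt{1+u_\tau'(t)^2} = s_\tau(t;H,\ell) - Ht$ and $1/\sqrt{1+u_\tau'(t)^2} = \sqrt{1-(s_\tau(t;H,\ell)-Ht)^2}$, so the two possible centres at distance $1/H$ from $x$ along the normal simplify to
\[
c_{-}(t) = \Bigl(\tfrac{s_\tau(t)}{H},\; u_\tau(t) - \tfrac{1}{H}\sqrt{1-(s_\tau(t)-Ht)^2}\Bigr), \quad c_{+}(t) = \Bigl(2t-\tfrac{s_\tau(t)}{H},\; u_\tau(t) + \tfrac{1}{H}\sqrt{1-(s_\tau(t)-Ht)^2}\Bigr).
\]

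Next I would verify that $B_{1/H}(c_{-}(t))$ is contained in the epigraph. A point $(r,y)$ lies in $B_{1/H}(c_{-}(t))$ if and only if $|r-s_\tau(t)/H|\leq 1/H$ and $y\leq c_{-,2}(t) + (1/H)\sqrt{1-(s_\tau(t)-Hr)^2}$, where $c_{-,2}(t)$ denotes the second coordinate of $c_{-}(t)$. Containment in the epigraph is therefore equivalent to the upper semicircle of the ball lying below the graph of $u_\tau$, i.e.\
\[
u_\tau(r) \geq u_\tau(t) - \tfrac{1}{H}\sqrt{1-(s_\tau(t)-Ht)^2} + \tfrac{1}{H}\sqrt{1-(s_\tau(t)-Hr)^2}
\]
for all $r \in (0,\ell)$ with $Hr \in (s_\tau(t)-1, s_\tau(t)+1)$. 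This is exactly the statement of Lemma~\ref{lem:inequality}.

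For uniqueness, observe that $B_{1/H}(c_{-}(t))$ and $B_{1/H}(c_{+}(t))$ are reflections of each other across the tangent line $T_t$ to the graph at $x$, and each lies in one of the two closed half-planes cut out by $T_t$. By $C^{1,\alpha}$ regularity, the epigraph of $u_\tau$ agrees with one of these half-planes to first order in a neighbourhood of $x$, so the ball with centre on the opposite side has open portions falling outside the epigraph arbitrarily close to $x$, which rules it out. Combined with the previous step, the ball $B_{1/H}(c_{-}(t))$ is the unique one tangent at $x$ and contained in the epigraph.

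The main obstacle is the lack of global concavity of $u_\tau$: the Cantor-type set $\ell\,\mathcal{C}(\tau)$ carries all of the derivative content of $s_\tau$, so $u_\tau'$ may fail to be monotone on arbitrarily small intervals. Purely local, second-order comparison of the ball to the graph on the smooth pieces does not imply global containment across the singular set, and Lemma~\ref{lem:inequality} is precisely the global comparison that bridges this gap, via a monotonicity analysis of the auxiliary function $P_t(r) = u_\tau(r) - (1/H)\sqrt{1-(s_\tau(t)-Hr)^2}$ whose unique minimum is attained at $r=t$.
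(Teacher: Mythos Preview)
Your proposal is correct and follows essentially the same route as the paper: you compute the downward-normal centre (your $c_-(t)$ coincides with the paper's $q_0(t)$ after the same simplification via Remark~\ref{rmk:ValuesOfUn}) and then reduce containment of the ball to precisely the inequality supplied by Lemma~\ref{lem:inequality}. The only difference is cosmetic: you explicitly name the reflected candidate $c_+(t)$ and spell out the tangent-line argument for uniqueness, whereas the paper dispatches uniqueness in a single line (``comes from the regularity''); both amount to the same observation that a $C^1$ graph has a unique inward normal direction.
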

    \begin{proof}
Once again we omit to write parameters $H,\ell$. Let us pick $t\in (0,\ell)$, let
    \begin{equation}
    q_0(t)=(t,u_{\tau}(t))+\frac{1}{H\sqrt{1+u'_{\tau}(t)^2}}\left( u_{\tau}'(t),-1\right)
    \end{equation}
and consider the ball $B_{1/H}(q_0(t))$. 
Then clearly $(t,u_{\tau}(t))\in \partial B_{1/H}(q_0(t))$. Now we prove that it lies below the graph of $u_{\tau}$. Call 
\[
a_{q_0(t)}:=\{ (r,q_0(t)\cdot e_2+\sqrt{H^{-2}- (r-q_0(t)\cdot e_1)^2}) \ | \ r\in [0,\ell]\cap (q_0(t)\cdot e_1-H^{-1},q_0(t)\cdot e_1+H^{-1}) \}
\]
the upper part of $\partial B_{1/H}(q_0(t))$ which lies in $[0,\ell]\times \mathbb{R}$ (see Figure \ref{fig:PROFILE}). For any $p\in a_{q_0(t)}$ it suffices to prove that
    \[
    p\cdot e_2\leq u_{\tau}(p\cdot e_1)
    \]
which means
    \begin{align*}
        q_0(t)\cdot e_2+\sqrt{H^{-2}- (r-q_0(t)\cdot e_1)^2})\leq u_{\tau}(r)
    \end{align*}
 for all $r\in [0,\ell]\cap (q_0(t)\cdot e_1-H^{-1},q_0(t)\cdot e_1+H^{-1}) $. This becomes
    \begin{align*}
        q_0(t)\cdot e_2+\sqrt{H^{-2}- (r-q_0(t)\cdot e_1)^2})=&u_{\tau}(t)-\frac{1}{H\sqrt{1+(u_{\tau}'(t))^2}}+\sqrt{H^{-2} - \left(r-t-\frac{u_{\tau}'(t)}{H\sqrt{1+(u_{\tau}'(t))^2}}\right)^2}\\
        =&u_{\tau}(t)-\frac{\sqrt{1-(s_{\tau}(t)-Ht)^2}}{H} +\sqrt{H^{-2} - \left(r-t-\frac{s_{\tau}(t)}{H}+t\right)^2}\\
           =&u_{\tau}(t)-\frac{1}{H} \sqrt{1-(s_{\tau}(t)-Ht)^2} +\frac{1}{H}\sqrt{1 - \left(s_{\tau}(t)-Hr \right)^2}.
    \end{align*}
Notice that $(q_0(t)\cdot e_1-H^{-1},q_0(t)\cdot e_1+H^{-1})$ is
    \begin{align*}
        (q_0(t)\cdot e_1-H^{-1},q_0(t)\cdot e_1+H^{-1})&=   \left(\frac{s_{\tau}(t)}{H}-H^{-1},\frac{s(t)}{H}+H^{-1}\right)\\
        &=H^{-1}(s_{\tau}(t)-1,s_{\tau}(t)+1).
    \end{align*}
Thence we need to check that
    \[
    u_{\tau}(t)-\frac{1}{H} \sqrt{1-(s_{\tau}(t)-Ht)^2}  
        +\frac{1}{H}\sqrt{1 - \left(s_{\tau}(t)-Hr \right)^2}\leq u_{\tau}(r).
    \]
for all 
    \[
    r\in (0,\ell)\cap H^{-1}(s_{\tau}(t)-1,s_{\tau}(t)+1).
    \]
    We now invoke Lemma \ref{lem:inequality} and conclude. The uniqueness of the tangent ball comes from the regularity.
    \end{proof}
\begin{proposition}\label{prop:ontheangle} Let $H\ell<2$, $\tau\in (0,1)$, and let $S$ be a connected component of $[0,\ell]\setminus \ell \C(\tau)$ in $[0,\sfrac{(\ell-\ell\tau)}{2}] \cup [\sfrac{(\ell+\ell\tau)}{2},\ell]$. Then the graph of the function $u_{\tau}(\cdot;H,\ell)$ over $S$ is consists of a circular arc of radius $1/H$ spanning an angle smaller than $\pi/2$.\newline \noindent On $(\sfrac{(\ell-\ell\tau)}{2},\sfrac{(\ell+\ell\tau)}{2})$ the graph of the function $u_{\tau}(\cdot;H,\ell)$ is a circular arc of radius $1/H$ and spanning an angle $\beta=\beta(\tau)$ strictly smaller than $\pi$.
\end{proposition}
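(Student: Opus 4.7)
The plan is to parametrize the graph of $u_\tau(\cdot;H,\ell)$ on each connected component of $(0,\ell)\setminus \ell\C(\tau)$ by the tangent angle. By property e) of Lemma \ref{lem:propofUn}, on such components $u_\tau$ solves classically the mean-curvature ODE of curvature $H$, so its graph is necessarily an arc of a circle of radius $1/H$; the nontrivial content of the Proposition lies only in bounding the angle swept.

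The key identity is Remark \ref{rmk:ValuesOfUn}, which rewrites
\[
\sin\theta(t)\ :=\ \frac{u_\tau'(t;H,\ell)}{\sqrt{1+(u_\tau'(t;H,\ell))^2}}\ =\ s_\tau(t;H,\ell)-Ht.
\]
By property e) of Lemma \ref{lem:propSn}, $s_\tau(\cdot;H,\ell)$ is locally constant on $(0,\ell)\setminus\ell\C(\tau)$. Hence on any connected component $S=(a,b)$ we have $s_\tau(\cdot;H,\ell)\equiv c$ for some constant $c$, whence $\theta(t)=\arcsin(c-Ht)$ and the angle spanned by the arc is $|\theta(a)-\theta(b)|$.

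For the first assertion, suppose $S\subset [0,(\ell-\ell\tau)/2]\subset (0,\ell/2)$. Property b) of Lemma \ref{lem:propSn} gives $c-Ht>0$ on $S$, while property f) gives $c-Ht<H\ell/2<1$ (using the standing hypothesis $H\ell<2$). Therefore $\theta(t)\in (0,\arcsin(H\ell/2))$ throughout $S$, and the spanned angle is strictly less than $\arcsin(H\ell/2)<\pi/2$. The other sub-case $S\subset [(\ell+\ell\tau)/2,\ell]$ follows by the reflection symmetry recorded in property c) of Lemma \ref{lem:propSn}.

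For the second assertion, on the entire middle interval $((\ell-\ell\tau)/2,(\ell+\ell\tau)/2)$ we have $s_\tau(\cdot;H,\ell)\equiv H\ell/2$: this interval is a single connected component of $\ell\C(\tau)^c$ (hence $s_\tau$ is constant on it) and it contains $\ell/2$, where property a) of Lemma \ref{lem:propSn} fixes the value $s_\tau(\ell/2;H,\ell)=H\ell/2$. Thus $\sin\theta(t)=H\ell/2-Ht$ takes the values $\pm H\ell\tau/2$ at the two endpoints, and the total angle spanned is $\beta(\tau)=2\arcsin(H\ell\tau/2)$, which is strictly smaller than $\pi$ because $H\ell\tau/2<H\ell/2<1$. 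I do not foresee any real obstacle: once the tangent angle is identified with the affine quantity $c-Ht$, the whole statement reduces to reading off the one-sided bounds already built into Lemma \ref{lem:propSn}.
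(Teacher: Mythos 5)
Your proof is correct, and it reaches the conclusion by a slightly different route than the paper. You identify the tangent angle $\theta(t)$ through the identity of Remark \ref{rmk:ValuesOfUn}, $\sin\theta(t)=s_\tau(t;H,\ell)-Ht$, use the local constancy of $s_\tau$ off $\ell\,\C(\tau)$ to see that $\sin\theta$ is affine on each component, and then read the spanned angle off as the difference of the two arcsine values at the endpoints; this gives the sharper quantitative bounds (angle $<\arcsin(H\ell/2)$ on the lateral components, and exactly $\beta=2\arcsin(H\ell\tau/2)$ on the central one). The paper instead proves the $<\pi/2$ claim by contradiction — an arc spanning more than $\pi/2$ inside a graph would force a horizontal tangent, contradicting $u_\tau'>0$ on $(0,\ell/2)$ and $u_\tau'<0$ on $(\ell/2,\ell)$ — and proves $\beta<\pi$ via the chord–angle relation $|x_0-y_0|=\tfrac{2}{H}\sin(\beta/2)$ together with the symmetry giving $|x_0-y_0|=\ell\tau$, which yields the same value $\sin(\beta/2)=H\ell\tau/2<\tau<1$. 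So the two arguments rest on the same facts (Remark \ref{rmk:ValuesOfUn} and Lemma \ref{lem:propSn}); yours is more computational and delivers the exact angles, the paper's is more geometric. The only point you should make explicit is the step ``spanned angle $=|\theta(a)-\theta(b)|$'': this uses that on a circular arc the central angle equals the total turning of the tangent, which is legitimate here because $\sin\theta(t)=c-Ht$ is strictly monotone and $\theta\in(-\pi/2,\pi/2)$ since the arc is a graph; also, for the lateral components one should note that their closures avoid any point where the needed strict inequalities of Lemma \ref{lem:propSn} could degenerate, which is immediate since property f) is strict and the endpoints lie in $\ell\,\C(\tau)\subset[0,\ell]$.
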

\begin{proof}
We refer again to Figure \ref{fig:PROFILE} to help the reader in following the proof. It is clear by construction and by means of Proposition \ref{propo:solutionCounter}, that on any connected component of $[0,\ell]\setminus \ell \C(\tau)$ the graph of $u_{\tau}$ is a circular arc of radius $H^{-1}$. The simple fact that it is the graph of a function tells us that the angle spanned by the arc is less than $\pi$. To prove the stronger assertions, first notice that in any region inside $[0,\sfrac{(\ell-\ell\tau)}{2}], [\sfrac{(\ell+\ell\tau)}{2},\ell]$ the angle is easily smaller than $\pi/2$. Indeed let $(a,b)\subset [0,\sfrac{(\ell-\ell\tau)}{2}] $ be a connected component of $[0,\ell]\setminus \ell \C(\tau)$ and notice that if the angle spanned by the circular arc representing $u_{\tau}$ on $(a,b)$ is bigger than $\pi/2$ then we would have $u_{\tau}'(s;H,\ell)=0$ for some $s\in (a,b)$. \newline 
But, from assertion b) of Lemma \ref{lem:propofUn} we have
    \begin{align*}
        u_{\tau}'(t;H,\ell)= \frac{s_{\tau}(t;H,\ell)-Ht}{\sqrt{1-(s_{\tau}(t;H,\ell)-Ht)^2}}>0 \ \ \ \text{on $(0,\sfrac{\ell}{2}) $}.
    \end{align*}
Analogously we argue on connected components of $[0,\ell]\setminus \ell\C(\tau)$ lying in  $[\sfrac{(\ell+\ell\tau)}{2},\ell]$, by exploiting that 
\begin{align*}
        u_{\tau}'(t;H,\ell)= \frac{s_{\tau}(t;H,\ell)-Ht}{\sqrt{1-(s_{\tau}(t;H,\ell)-Ht)^2}}<0 \ \ \ \text{on $(\sfrac{\ell}{2},\ell) $}.
    \end{align*}
Thus we need to check just that the assertion holds for the circular arc lying in $(\sfrac{(\ell-\ell\tau)}{2},\sfrac{(\ell+\ell\tau)}{2})$. To check this we just observe that the chord connecting $x_0=(\sfrac{(\ell-\ell\tau)}{2}, u_{\tau}(\sfrac{(\ell-\ell\tau)}{2};H\ell))$ to $y_0=(\sfrac{(\ell+\ell\tau)}{2}, u_{\tau}(\sfrac{(\ell+\ell\tau)}{2};H\ell)) $ has length  
    \[
    |x_0-y_0|=\frac{2}{H}\sin\left(\frac{\beta}{2}\right)
    \]
being $\beta=\beta(\tau)$ the angle spanned by the arc. But also, since $ u_{\tau}(\sfrac{(\ell-\ell\tau)}{2};H\ell)= u_{\tau}(\sfrac{(\ell+\ell\tau)}{2};H\ell)$ (property c) of Lemma \ref{lem:propofUn}), $|x_0-y_0|=\ell\tau$. In particular, since $H\ell<2$,
    \[
    2\sin\left(\frac{\beta}{2}\right)=H\ell \tau < 2 \tau.
    \]
Thus
\[
\sin\left(\frac{\beta}{2}\right) < \tau<1
\]
and hence $\beta/2<\pi/2$ yielding $\beta<\pi$.
\end{proof}
We collect an easy geometrical fact, that will be useful in the proof of Theorem \ref{thm:sharpness}. 
    \begin{lemma}\label{forsecisifalemma}
    Let $C$ be a circular sector of radius $r$ relative to an arc $a$ spanning an angle $\beta$. Call $x_0$, $y_0$ the left and right extremum of the arc respectively. Let $\vartheta:[0,1]\rightarrow \R^2$ be a curve lying outside of $C$ and such that $\vartheta(0)=x_0$, $\vartheta(1)=y_0$. If $\beta\leq \pi/2$ then for any $x\in C$ we have
        \[
        \dist(x,\vartheta)\leq r.
        \]
    If $\beta \in (\pi/2,\pi)$ then there exists a value $\delta_0=\delta_0(\beta,r)>0$ such that if $\delta<\delta_0$ and $\dist(a,\vartheta)\leq \delta$ then for any $x\in C$
        \[
         \dist(x,\vartheta)\leq r.
        \]
    \end{lemma}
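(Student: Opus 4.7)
The plan is to reduce the statement to an elementary covering problem: show that the sector $C$ is contained in $\overline{B}_r(x_0)\cup\overline{B}_r(y_0)$ when $\beta\leq\pi/2$, and that for $\beta\in(\pi/2,\pi)$ the residual set $C_2:=C\setminus(\overline{B}_r(x_0)\cup\overline{B}_r(y_0))$ is a thin region clinging to the arc $a$, which is therefore covered by an $r$-neighborhood of $\vartheta$ once $\vartheta$ lies close enough to $a$. Since $x_0=\vartheta(0)$ and $y_0=\vartheta(1)$ both lie on $\vartheta$, every point of $\overline{B}_r(x_0)\cup\overline{B}_r(y_0)$ is automatically within distance $r$ of $\vartheta$, so only points of $C_2$ will require the proximity hypothesis.

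To make this precise I would place the vertex of $C$ at the origin and the bisector along the positive $x$-axis, so that $x_0=r(\cos(\beta/2),-\sin(\beta/2))$, $y_0=r(\cos(\beta/2),\sin(\beta/2))$, and a generic $x\in C$ is written as $x=\rho(\cos\theta,\sin\theta)$ with $\rho\in[0,r]$ and $|\theta|\leq\beta/2$. The identity
\[
|x-y_0|^2=\rho^2+r^2-2r\rho\cos(\beta/2-\theta)
\]
(and its mirror image for $x_0$) yields the clean criterion $|x-y_0|\leq r\iff \rho\leq 2r\cos(\beta/2-\theta)$. For the first assertion, assuming $\theta\in[0,\beta/2]$ and $\beta\leq\pi/2$, the argument $\beta/2-\theta$ lies in $[0,\pi/4]$, so $\cos(\beta/2-\theta)\geq \sqrt{2}/2$, and thus $\rho\leq r<r\sqrt{2}\leq 2r\cos(\beta/2-\theta)$; this gives $|x-y_0|\leq r$ and consequently $\dist(x,\vartheta)\leq r$. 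The symmetric strip $\theta\in[-\beta/2,0]$ is handled by $x_0$.

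For $\beta\in(\pi/2,\pi)$ the same criterion shows that $x\in C_2$ forces $\rho>2r\cos(\beta/2-|\theta|)\geq 2r\cos(\beta/2)$; if $\cos(\beta/2)\geq 1/2$, i.e. $\beta\leq 2\pi/3$, this is already incompatible with $\rho\leq r$, so $C_2=\emptyset$ and any $\delta_0>0$ works. When $\beta>2\pi/3$, I would set $\eta(\beta,r):=r-2r\cos(\beta/2)\in(0,r)$ and $\delta_0:=r-\eta=2r\cos(\beta/2)>0$. Given $\delta<\delta_0$ and $\dist(a,\vartheta)\leq\delta$, for each $x\in C_2$ the radial projection $p(x):=(r\cos\theta,r\sin\theta)\in a$ satisfies $|x-p(x)|=r-\rho<\eta$; picking $q\in\vartheta$ with $|p(x)-q|\leq\delta$ one gets $|x-q|\leq|x-p(x)|+|p(x)-q|<\eta+\delta<r$, as required.

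The only genuine subtlety I foresee is the interpretation of the hypothesis $\dist(a,\vartheta)\leq\delta$: since $x_0,y_0\in a\cap\vartheta$ the ordinary set-distance vanishes, so this condition must be read as a one-sided (or Hausdorff) bound asserting that every point of $a$ lies within distance $\delta$ of $\vartheta$, which is exactly what is needed in the projection step above. Apart from this, the argument is pure planar geometry: a two-disk covering of the sector, plus a tubular neighbourhood of the arc in the narrow range $\beta\in(2\pi/3,\pi)$.
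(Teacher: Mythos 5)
Your proof is correct, but in the case $\beta\in(\pi/2,\pi)$ it takes a genuinely different route from the paper, and the difference is tied to how the ambiguous hypothesis $\dist(a,\vartheta)\leq\delta$ is read. The paper (after the same two-disk covering of $C$ for $\beta\leq\pi/2$, which you reprove by the explicit polar-coordinate criterion $|x-y_0|\leq r\iff\rho\leq 2r\cos(\beta/2-\theta)$) handles the residual region $\overline{U}=C\setminus(B_r(x_0)\cup B_r(y_0))$ by covering it with balls of radius $r$ centered on an outer arc $a_\delta$ of radius $r+\delta$, reading the hypothesis as ``$\vartheta$ lies within distance $\delta$ of $a$, hence between $a$ and $a_\delta$'', and then invoking a topological crossing argument: the segment from $x\in\overline{U}$ to the center $p\in a_\delta$ must meet $\vartheta$, so $\dist(x,\vartheta)\leq|x-p|\leq r$. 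You instead read the hypothesis as ``every point of $a$ is within $\delta$ of $\vartheta$'' (the one-sided bound from $a$ to $\vartheta$), show that any point of $C$ outside the two endpoint balls satisfies $r-\rho<\eta=r-2r\cos(\beta/2)$, and conclude by the triangle inequality through the radial projection onto $a$; notably your threshold $\delta_0=2r\cos(\beta/2)$ is exactly the paper's critical length $2r\sin\bigl(\tfrac{\pi-\beta}{2}\bigr)$. What your approach buys is the elimination of the separation/crossing step, which the paper only sketches (it tacitly requires that $\vartheta$ acts as a barrier between $a$ and $a_\delta$, a Jordan-type argument); what it costs is reliance on the other one-sided reading of the hypothesis. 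Since the literal set distance is zero (the endpoints $x_0,y_0$ lie on both $a$ and $\vartheta$), some disambiguation is unavoidable, and you flag this explicitly; in the paper's application ($\vartheta\subset\partial\Omega_\delta$ a graph-type outward perturbation of the arc $a\subset\partial E$ at uniform distance at most $\delta$) both one-sided bounds hold, so your version of the lemma serves the intended purpose just as well.
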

    \begin{proof}
    \begin{figure}
        \centering
        \includegraphics[scale=0.6]{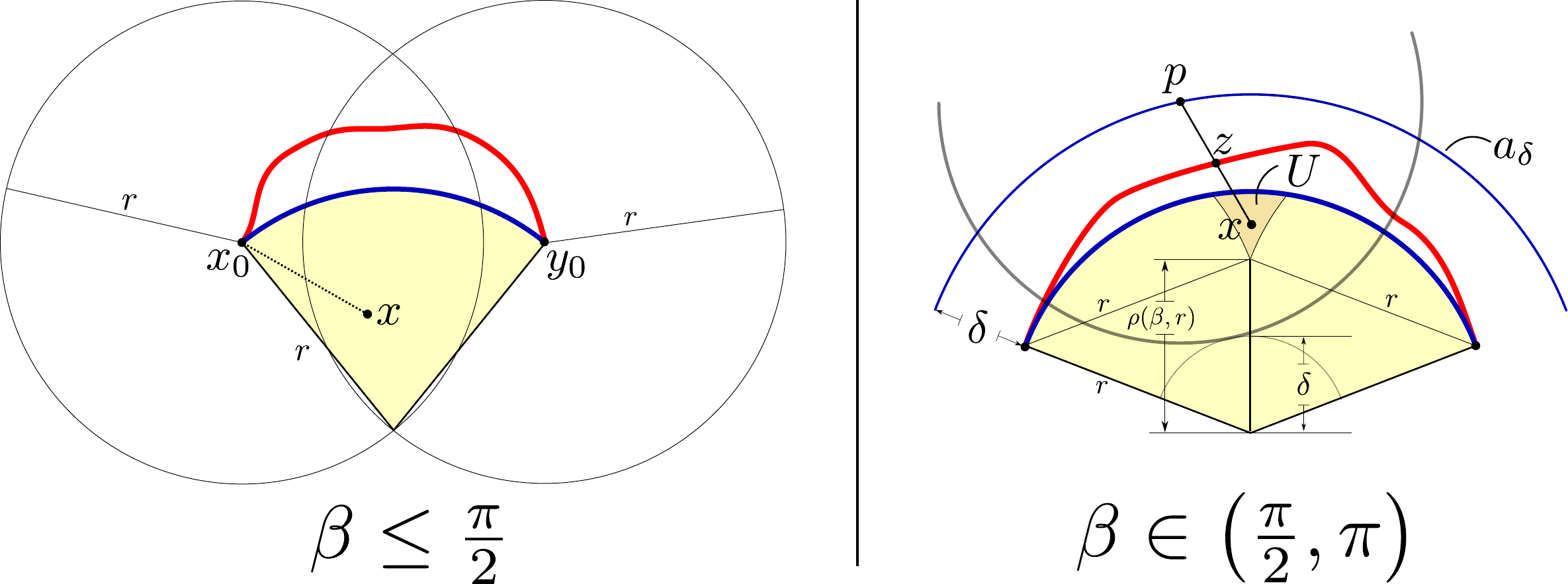}
        \caption{A depiction of Lemma \ref{forsecisifalemma}. The yellow region represents the circular sector $C$, the blue line represents the arc $a$ and the red line the curve $\vartheta$. On the left we show the case in which the circular sector spans an angle smaller than or equal to $\pi/2$, while on the right the case where the circular sector spans an angle greater than $\pi/2$ but strictly less than $\pi$.}
        \label{Figureforsecisifa}
    \end{figure}
We will make use of Figure \ref{Figureforsecisifa} to help the reader in following the proof. We observe that, if $\beta\leq \pi/2$, the union of the two circles centered at $x_0,y_0$ and with radius $r$ covers $C$ (see picture \ref{Figureforsecisifa}). Thus at any $x\in C$ we have
    \[
    \dist(x,\vartheta)\leq \min\{|x-x_0|,|x-y_0|\}\leq r.
    \]
If $\beta\in (\pi/2,\pi)$ call $U:=C\setminus (B_r(x_0)\cup B_r(y_0))\neq \emptyset$. Let $a_{\delta}$ be an arc of radius $r+\delta$ and spanning an angle $\beta$ from  the lines on which $x_0$, $y_0$ lies (see Figure \ref{Figureforsecisifa}). Then $\vartheta$ is forced to lie in between the arc $a$ and the arc $a_{\delta}$. We notice that all balls of radius $r$ centered at a point in $a_{\delta}$ are tangent to a circle of radius $\delta$ centered at the origin. This means that if
    \[
    \delta<\delta_0(r,\beta)< 2r\sin\left(\frac{\pi-\beta}{2}\right)=\varrho(\beta,r),
    \]
then
    \[
    \overline{U}\subset \bigcup_{p\in a_{\delta}} \overline{B_r(p)},
    \]
since $\varrho(\beta,r)$ is the length  of the segment connecting the origin to the intersection point between $\partial B_r(x_0)$ and $\partial B_r(y_0)$ in $\partial U$. Let $x\in \overline{U}$ and let $p\in a_\delta$ be such that $x\in \overline{B_r(p)}$. Then, since $\vartheta$ is connecting $x_0$ to $y_0$, there is a point $z\in \vartheta$ such that the segment connecting $x$ to $p$ intersects $\vartheta$ in $z$. Therefore
    \[
    \dist(x,\vartheta)\leq |x-z|\leq |x-p|\leq r.
    \]
    \end{proof}
\subsection{The construction of the sharp examples}\label{sbsct:Constr}
We are now ready to prove Theorem \ref{thm:sharpness}.
\begin{proof}[Proof of Theorem \ref{thm:sharpness}]
We begin with the Lipschitz case. The $C^{1,\alpha}$ case will follow by a similar argument with the help of Lemma \ref{forsecisifalemma}. In both cases, the main ingredients are Theorems \ref{giogian1}, \ref{giogian2}.\\
\medskip

\textbf{The Lipschitz case.} We will construct a Lipschitz domain such that the Hausdorff dimension of the contact set with its Cheeger set is exaclty a fixed number. Fix this number to be $k\in \N$ and consider the regular $k$-gon $R_k$ of edge $\varrho$. On each edge consider an arc of radius $H^{-1}$ spanning an angle $\beta$ smaller than $\pi$. We refer to Figure \ref{fig:LIP1} for a representation of this situation when $k=6$.
\begin{figure}
    \centering
    \includegraphics{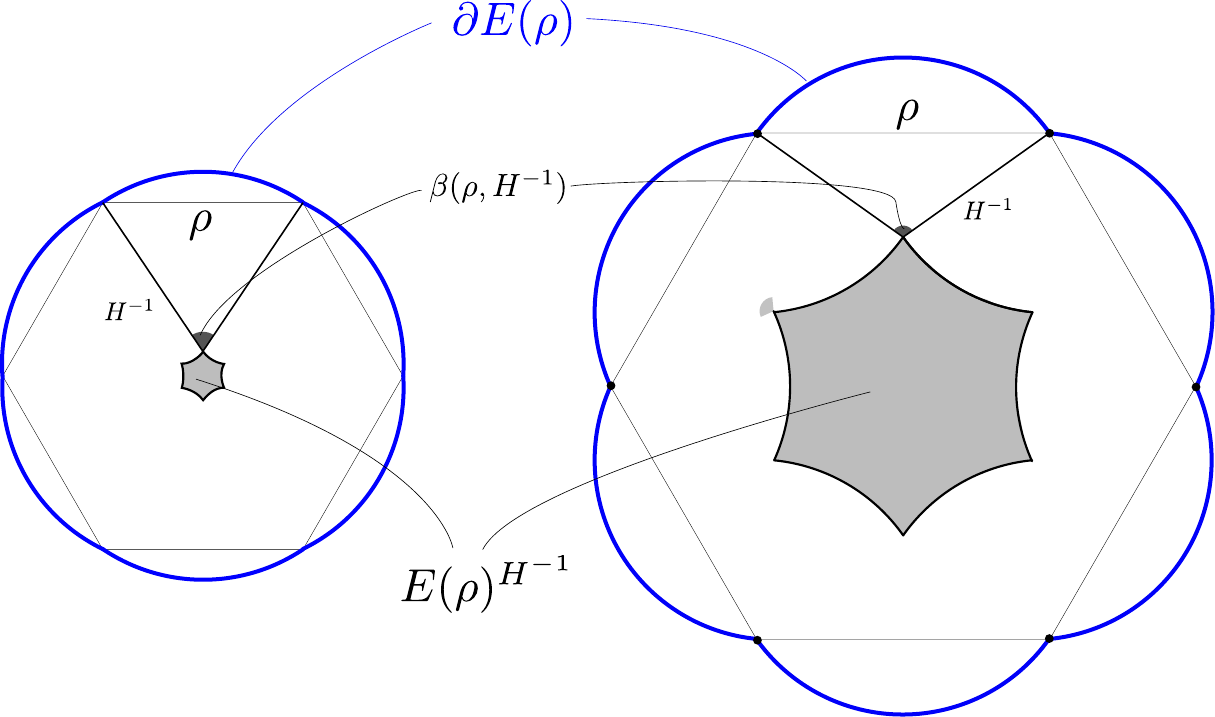}
    \caption{The picture represents the way $\partial E(\varrho)$ is built once $H$ has been fixed. Notice that the smaller is $\varrho$ the smaller is $\L^2(E(\varrho)^{H^{-1}})$.}
    \label{fig:LIP1}
\end{figure}
This procedure is possible with an angle $\beta(\varrho,H)<\pi$, provided $\varrho<\frac{2}{H}$, since clearly
    \begin{equation}\label{pinocchio}
    \varrho=2H^{-1}\sin(\sfrac{\beta}{2}).    
    \end{equation}
Let us set
    \[
   \mathcal{I}_{H}:=\left(0,\frac{2}{H} \right)
    \]
and for $\varrho\in \mathcal{I}_{H}$, let $E(\varrho)$ be the set enclosed by these arcs. Now we show that, for some $\varrho_0\in  \mathcal{I}_{H}$ it holds
    \begin{equation}\label{hh}
    \L^2\left( E(\varrho_0)^{H^{-1}}\right) = \pi H^{-2},
    \end{equation}
being as before
\[
    E(\varrho)^{r}:= \{x\in E(\varrho) \ | \ \dist(x,\partial E(\varrho))>r\}.
    \]
Indeed the map $\varrho\mapsto \L^2\left(E(\varrho)^{H^{-1}}\right)$ is continuous and for small $\varrho$
    \[
    \lim_{\varrho \rightarrow 0 } \L^2 \left(E(\varrho)^{H^{-1}}\right)=0.
    \]
Moreover, as an easy computation shows, we have that
    \[
   \L^2\left(E(\varrho)^{H^{-1}}\right)=\L^2(R_k)-\left(k H^{-1}\frac{\varrho}{2} \cos\left(\frac{\beta}{2}\right)- H^{-2}k\left( \frac{\pi}{2} -\beta\right)\right)+H^{-2}\pi
    \]
and 
    \[
    \L^2(R_k)=\frac{\varrho^2 k}{4\tan(\sfrac{\pi}{k})},
    \]
yielding
    \begin{align*}
     \L^2\left(E(\varrho)^{H^{-1}}\right)&=\pi H^{-2} +\frac{\varrho^2}{4} k\left(\frac{1}{\tan(\sfrac{\pi}{k})}-\frac{1}{\tan(\sfrac{\beta}{2})}+\frac{1}{\sin(\sfrac{\beta}{2})^2}\left(\frac{\pi}{2}-\beta\right)\right)\\
     &=\pi H^{-2} +\frac{\varrho^2}{4} k\left(\frac{1}{\tan(\sfrac{\pi}{k})}-\frac{1}{\tan(\sfrac{\beta}{2})}+\frac{1}{\sin(\sfrac{\beta}{2})^2}\left(\frac{\pi}{2}-\beta\right)\right).
    \end{align*}
For $\varrho \rightarrow 2/H$ \eqref{pinocchio} gives $\beta\rightarrow \pi$ and hence
    \begin{equation}\label{criticalEQ}
    \lim_{\varrho\rightarrow \sfrac{2}{H}} \L^2\left(E(\varrho)^{H^{-1}}\right)=\pi H^{-2} + kH^{-2}\left(\frac{1}{\tan(\sfrac{\pi}{k})}- \frac{\pi}{2}\right).
    \end{equation}
We observe that
    \[
\left(\frac{1}{\tan(\sfrac{\pi}{k})}- \frac{\pi}{2}\right)\geq 0 \ \ \ \Leftrightarrow\ \ \  k\geq \frac{\pi}{\arctan(\sfrac{2}{\pi})}\approx 5.5.
    \]
Hence, for $k\geq 6$ we can achieve also
    \[
    \lim_{\varrho\rightarrow \frac{2}{H}} \L^2\left(E(\varrho)^{H^{-1}}\right)>\pi H^{-2}.
    \]
This means that, for any $k\geq 6$ there exists some $\varrho_0\in (0,\sfrac{2}{H})$ satisfying \eqref{hh}. We also notice that $E(\varrho_0)$ has no neck of radius $H^{-1}$ and that at any $z\in \partial E(\varrho_0)$ there is a ball of radius $H^{-1}$ entirely contained in $E(\varrho_0)$ and tangent at $z$. In particular $E(\varrho_0)= E(\varrho_0)^{H^{-1}} \oplus B_{1/H}$. Therefore we invoke Steiner formulas \eqref{steiner1}, \eqref{steiner2}, that, combined with \eqref{hh}, yield
    \begin{align*}
    \L^2\left( E(\varrho_0) \right)&= \L^2\left( E(\varrho_0)^{H^{-1}}\right)+\mathcal{M}_0\left( E(\varrho_0)^{H^{-1}}\right)H^{-1}+\pi H^{-2}\\
    &=\mathcal{M}_0\left( E(\varrho_0)^{H^{-1}}\right)H^{-1}+2\pi H^{-2}=H^{-1}P(E(\varrho_0)).
    \end{align*}
So, finally $H^{-1}= \frac{\L^2(E(\varrho_0))}{P(E(\varrho_0))}$. Now the very definition of $E(\varrho_0)$, combined with Theorem \ref{giogian1} and the equality $H^{-1}= \frac{\L^2(E(\varrho_0))}{P(E(\varrho_0))}$, implies that $E$ is self-Cheeger and that $h(E(\varrho_0))=H$. 
\begin{figure}
    \centering
    \includegraphics{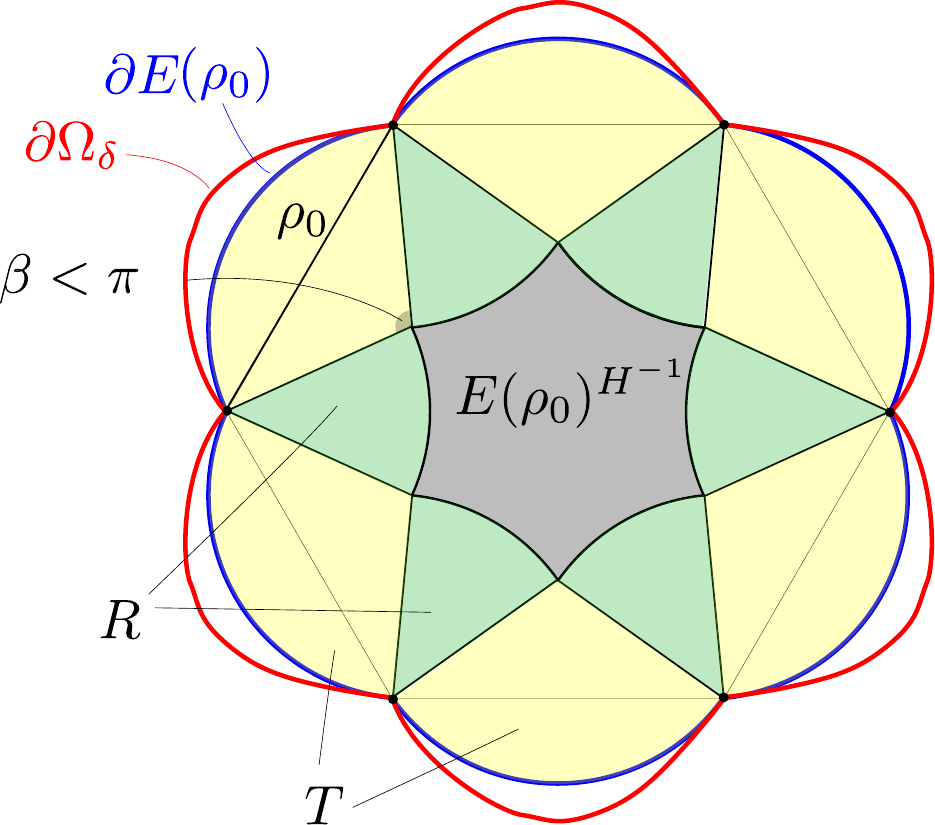}
    \caption{The way $\partial \Omega_{\delta}$ is built once $\varrho_0$ has been calibrated depending on $H$. Notice that all the angles $\beta<\pi$ ensure that we can apply Lemma \ref{forsecisifalemma} and conclude that, for small $\delta$, $E(\varrho_0)$ is a Cheeger set of $\Omega_{\delta}$ as well.}
    \label{fig:LIP2}
\end{figure}
Consider now $\Omega_{\delta}$ to be a small perturbation of $E(\varrho_0)$ in a way that $\partial \Omega_{\delta} \cap \partial E(\varrho_0)=\{\text{the family of vertexes}\}$ and $\dist(\partial \Omega_{\delta},\partial E(\varrho_0))\leq \delta$ (see Figure \ref{fig:LIP2}). We claim that, for some $\delta$ small enough, $\Omega_{\delta}$ has no neck of radius $H^{-1}$ and 
    \begin{equation}\label{e:Lip}
    \Omega_{\delta}^{H^{-1}}=E(\varrho_0)^{H^{-1}}.
    \end{equation}
This will imply, thanks to Theorem \ref{giogian2}, that $E(\varrho_0)$ is a Cheeger set of $\Omega_{\delta}$. Indeed, we know that $h(\Omega_{\delta})^{-1}$ would be the only solution to the equation
    \[
    \L^2(\Omega_{\delta}^r)=\pi r^2
    \]
and \eqref{e:Lip} together with the very definition of $\varrho_0$ would give us that $H^{-1}$ is a solution. Then, by uniqueness we would have $H=h(\Omega_{\delta})=h(E(\varrho_0))$ and $E(\varrho_0)$ Cheeger set of $\Omega_{\delta}$ obtained as $E(\varrho_0)=\Omega_{\delta}^{H^{-1}}\oplus B_{1/H}$.  
The contact set is now given just by the vertexes and thus $\H^{0}(\partial E(\varrho_0)\cap \partial \Omega_{\delta})=k$ with $\Omega_{\delta}$ an open bounded set with Lipschitz boundary.\newline
Since clearly for small values of $\delta$ the set $\Omega_{\delta}$ has no necks of radius $H^{-1}$, we are left to prove \eqref{e:Lip}.\\ \vskip0.1cm \noindent 
Any connected component $S$ of $\partial E(\varrho_0)\setminus \partial \Omega_{\delta}$ is made by an arc $a^S$ spanning an angle strictly smaller than $\pi$. Call $C^S$ the circular sector relative to the arc $a^S$ and split
\[
E(\varrho_0)\setminus E(\varrho_0)^{H^{-1}}=
R\cup T, \ \ \ T=\bigcup_{\substack{\text{$S$ is a connected}\\
    \text{component of}\\
    \text{$\partial  E(\varrho_0)\setminus \partial \Omega_{\delta}$}}} C_S, \ \ \ R= \left( E(\varrho_0)\setminus  E(\varrho_0)^{H^{-1}}\right)\setminus T.
    \]
Clearly, if $z\in \Omega_{\delta}\setminus E(\varrho_0)$, then
    \[
    \dist(z,\partial \Omega_{\delta})<\delta <H^{-1}.
    \]
If instead $z\in E(\varrho_0)\setminus E(\varrho_0)^{H^{-1}}$ then either $z\in T$ or $z\in R$. If $z\in T$ (see again Figure \ref{fig:LIP2}) then it belongs to some $C^S$ relative to an arc $a^S$ spanning an angle less than $\pi.$ In particular by invoking Lemma \ref{forsecisifalemma} we can find a $\delta$ such that for all $z\in C^S$ it holds
    \[
    \dist(z,\partial \Omega_{\delta})\leq H^{-1}.
    \]
Since there are a finite number of arcs we can find a small $\delta$ for which $\dist(z,\partial \Omega_{\delta})\leq H^{-1}$ holds for every $z\in T$. If instead $z\in R$ we simply observe that
    \[
    \dist(z,\partial \Omega_{\delta})=\dist(z,\partial E(\varrho_0)\cap \partial \Omega_{\delta})=\dist(z,\partial E(\varrho_0))\leq H^{-1}.
    \]
Thence \eqref{e:Lip} is in force and the construction of the example is concluded.\\
\medskip

\textbf{The $C^{1,\alpha}$ case.} The logic of the proof is similar to the one in the Lipschitz case. Let us consider the construction of Section \ref{cantor} about the Cantor-type set $\C(\tau)$ and the function $u_{\tau}$. For $\tau=1$ we have $\alpha=\alpha(\tau)=0$ (defined in \eqref{alf}) and for $\tau=0$ we have $\alpha=1$. To produce examples for $\alpha\in (0,1)$ we choose $\tau\in (0,1)$ and fix $H\in \R_+$. Then we consider
    \[
    \mathcal{I}_{H}:=\left(0,\frac{2}{H}\right).
    \]
\begin{figure}
    \centering
    \includegraphics[scale=0.8]{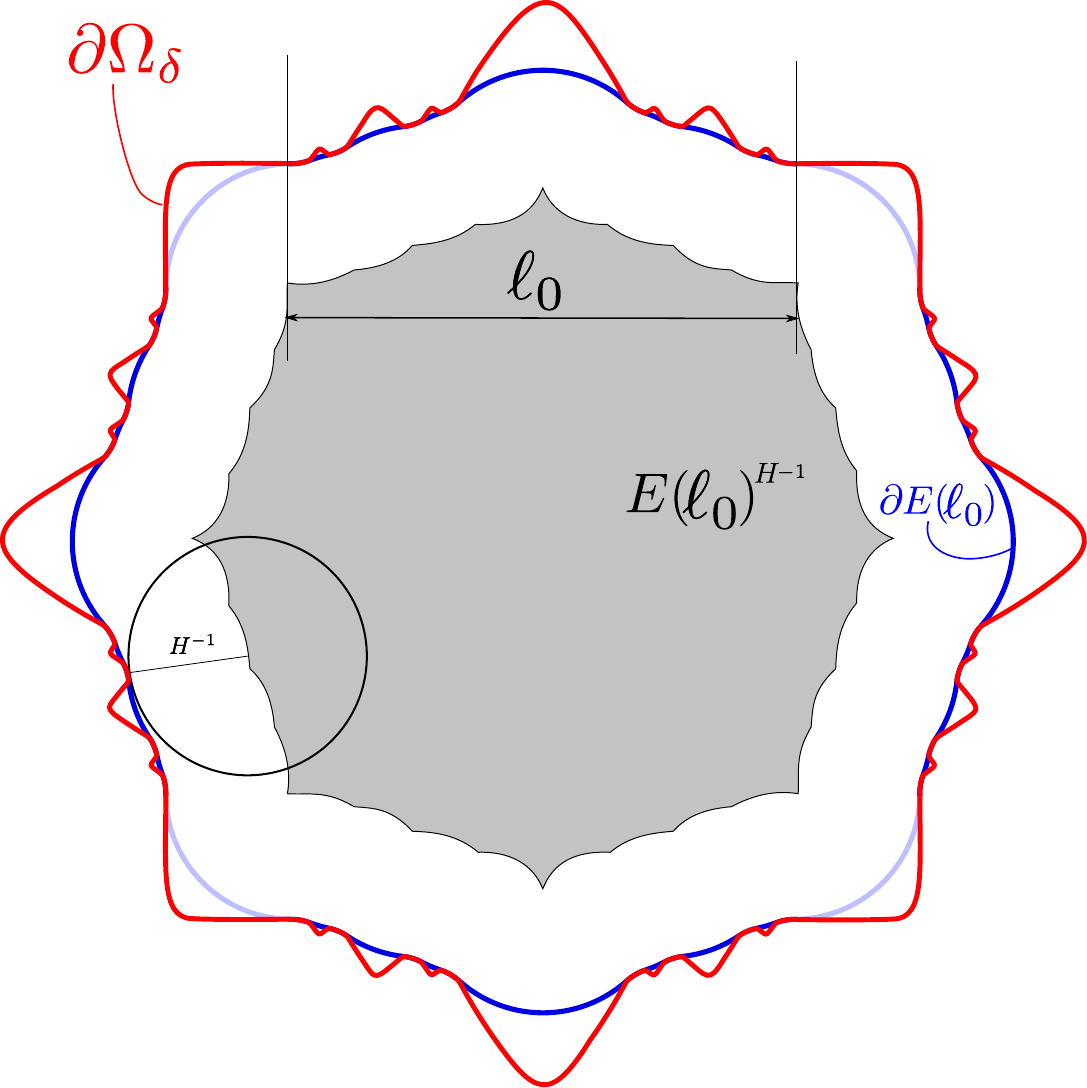}
    \caption{The illustration shows the construction of the sets $\Omega_{\delta} ,E(\ell)$ with the properties inferred by Theorem \ref{thm:sharpness}. To develop the picture we used a profile $u_{n,\tau}$ (for $n=4$, $\tau=1/3$) in place of the profile $u_{\tau}$, as explained in Subsection \ref{sbsct:Constr}. The red line stands for the boundary of the ambient space, while the blue line represents the boundary of $E(\ell_0)$ where $\ell_0$ has been chosen so that \eqref{ggg} is in force. The darker part of $\partial E(\ell_0)$ is the part of the boundary obtained as the graph of $u_{n,\tau}$, while the ligther part are the quarter of circles of radius $1/H$ that has been used to glue together the profiles of $u_{n,\tau}$ in the four directions. The grey set in the middle is the set $E(\ell_0)^{H^{-1}}$ defined by \eqref{e:distset}. By calibrating $\delta$ we can obtain the sought set $\Omega_{\delta}$ with the aid of Theorems \ref{giogian1}, \ref{giogian2}.}
\label{fig:CANTOR1}
\end{figure} \noindent 
For any $\ell\in \mathcal{I}_{H}$ consider a set $E(\ell)$ made by the four copies of $(t,u_{\tau}(t;H,\ell))$, as in Figure \ref{fig:CANTOR1}, joined by four quarter of circle of radius $H^{-1}$. Observe that if we sew the graph of $u_{\tau}$ with a quarter of circle as in Figure \ref{fig:CANTOR1} we preserve the $C^{1,\alpha}$ regularity of the whole profile, due to the fact that $u_{\tau}'(0;H;\ell)=u_{\tau}'(\ell;H,\ell)=0$ and that $u_{\tau}'$ is $C^{\alpha}$ in $(0,\ell)$ as stated in  Lemma \ref{lem:propofUn}. Consider the set
    \begin{equation}\label{e:distset}
    E(\ell)^{H^{-1}}:=\{x\in E(\ell) \ | \ \dist(x,\partial E(\ell))>H^{-1} \}.
 \end{equation}
 We show, as in the Lipschitz case, that there exists an $\ell_0\in \mathcal{I}_{H}$ such that
   \begin{equation}\label{ggg}
    \L^2\left(E(\ell_0)^{H^{-1}}\right)=\pi H^{-2}.
   \end{equation}
In this case the estimates can be easily done by observing that $E(\ell)^{H^{-1}}$ always contains a square of edge length $\ell$. So, for $\ell>\frac{\sqrt{\pi}}{H}$ (which is an admissible value in $\mathcal{I}_{H}$ since $\sqrt{\pi}<2$) we have
    \[
\L^2\left(E(\ell)^{H^{-1}}\right)>\ell^2>\pi  H^{-2}.
    \]
Moreover, any $E(\ell)$ is contained in a ball of radius $\sqrt{2}\ell)$ and thus
    \[
   \L^2\left(E(\ell)^{H^{-1}}\right)\leq 2 \pi \ell^2.  
    \]
In particular for $\ell< \frac{1}{H\sqrt{2}}$ (which is again an admissimble value in $\mathcal{I}_H$) 
we have
        \[
       \L^2\left(E(\ell)^{H^{-1}}\right)< \pi H^{-2}.
        \]
Since the map $\ell \mapsto \L^2\left(E(\ell)^{H^{-1}}\right)$ is continuous the intermediate value theorem tells us that there is an $\ell_0$ such that \eqref{ggg} is satisfied. We now have that $H^{-1}$ satisfies \eqref{ggg}, and $E(\ell_0)$ has no neck of radius $1/H$, since $E(\ell_0)^{H^{-1}}$ is path connected (Remark \ref{senzacollo}). By exploiting Steiner formulas \eqref{steiner1}, \eqref{steiner2} as in the Lipschitz case we can see that $H^{-1}=\frac{\L^d(E(\ell_0))}{P(E(\ell_0))}$. Thus, by applying Theorem \ref{giogian1} and by arguing as in the Lipschitz case (with the aid of Proposition \ref{propo:solutionCounter} in applying Theorem \ref{giogian1}) we can conclude that $E(\ell_0)=E(\ell_0)^{H^{-1}}\oplus B_{1/H}$ is self-Cheeger and that $h(E(\ell_0))=H$.\\ \noindent 
We can modify the ambient space $\Omega_{\delta}$ by gently pushing up $\partial E(\ell_0)$ far away from the Cantor set. Indeed we consider an $\Omega_{\delta}$ represented by the function
    \[
    f_{\delta}(t):=u(t)+\delta g(t) \ \ \ \text{on $[0,\ell]$}
    \]
with $g(t)\in C^{\infty}((0,\ell))$ a non-negative smooth function such that $\ell \C(\tau) = \{g=0\}$,
and a regular small surgery on the part where $E(\ell_0)$ is made by quarter of circles (see again Figure \ref{fig:CANTOR1}). This function has the same regularity of $u$, namely $C^{1,\alpha}$, which yields that $\Omega_{\delta}$ has $C^{1,\alpha}$ boundary regularity. As in the Lipschitz case we just need to prove that, for $\delta$ small enough,
    \begin{equation}\label{finaleq}
   \Omega_{\delta}^{H^{-1}}=E(\ell_0)^{H^{-1}}
    \end{equation}
    and then Theorem \ref{giogian2} will ensure that $E(\ell_0)$ is a Cheeger set of $\Omega_{\delta}$ as well. But now, by construction $\partial E(\ell_0)\cap \partial \Omega_{\delta}$ has the same dimension of the underlying Cantor-type set $\C(\tau)$ used to build the profile $u_{\tau}$ and thus $\dim_{\H}(\partial E(\ell_0)\cap \partial \Omega_{\delta})=\alpha$. This would complete the construction, so we are left with proving \eqref{finaleq}. \vskip0.05cm
    \medskip \noindent
    For $x\in \Omega_{\delta}\setminus E(\ell_0)$ we have, for $\delta$ small enough,
      \[
        \dist(x,\partial \Omega_{\delta})\leq \delta<H^{-1},
        \]
    and thus $E(\ell_0)^{H^{-1}}\subset \Omega_{\delta}^{H {-1}}$. We just need to prove that for $x\in E(\ell_0)\setminus E(\ell_0)^{H^{-1}}$ we have
        \[
        \dist(x,\partial \Omega_{\delta})\leq H^{-1}.
        \]
    \begin{figure}
    \centering
    \includegraphics{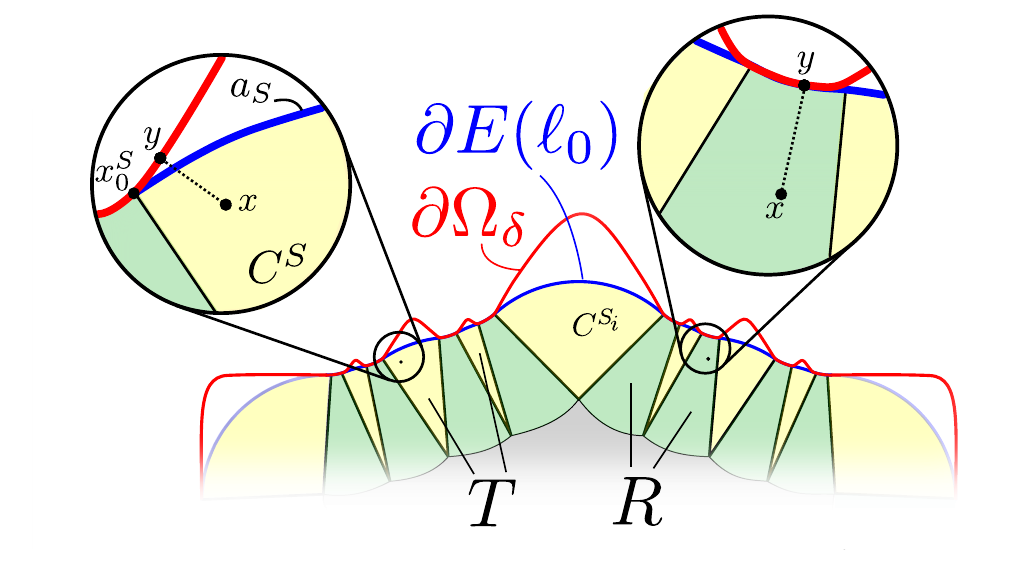}
    \caption{We can split the portion $E(\ell_0)\setminus E(\ell_0)^{H^{-1}}$ into a portion $T$ (in yellow) made by circular sectors $C^S$ of radius $H^{-1}$, corresponding to the connected componend $S$ of $\partial  E(\ell_0)\setminus \partial \Omega_{\delta}$, and a portion $R$ (in green) which is the remaining part. In the case $x\in T$ we are in the situation of Lemma \ref{forsecisifalemma} due to the fact that each $C^S$, but the central ones $C^{S_1},\ldots, C^{S_4}$, do not span an angle bigger than $\pi/2$. The central ones instead span an angle strictly less than $\pi$ and thus for $\delta$ small enough we have $\dist(x,\partial \Omega)\leq H^{-1}$ for $x\in T$. If instead $x\in R$ we simply observe that $|x-y|=\dist(x,\partial\Omega_{\delta})=\dist(x,\partial  E(\ell_0)\cap\partial \Omega_{\delta})=\dist(x,\partial  E(\ell_0))\leq H^{-1}$.}
    \label{figuraSm}
\end{figure}
Propositions \ref{propo:solutionCounter}, \ref{prop:ontheangle} and the construction of $E(\ell_0)$ tell us that on any connected component $S$ of $\partial  E(\ell_0) \setminus \partial \Omega_{\delta}$ is a circular arc $a_S$ from a circle of radius $1/H$ and centered on $\partial E(\ell_0)^{H-1}$ (by definition of $E(\ell_0)^{H^{-1}}$) touching $\partial E(\ell_0)\cap \partial \Omega_{\delta}$ in exactly two points $x_0^S$, $y_0^S$ and entirely lying below $\partial \Omega_{\delta}$ elsewhere. Call $C^S$ the circular sector identified by $a_S$ and notice that we can split $ E(\ell_0) \setminus E(\ell_0)^{H^{-1}}$ as
    \[
     E(\ell_0)\setminus  E(\ell_0)^{H^{-1}}=R\cup T, \ \ \ T=\bigcup_{\substack{\text{$S$ is a connected}\\
    \text{component of}\\
    \text{$\partial  E(\ell_0)\setminus \partial \Omega_{\delta}$}}} C_S, \ \ \ R= \left( E(\ell_0)\setminus  E(\ell_0)^{H^{-1}}\right)\setminus T
    \]
(see Figure \ref{figuraSm}). We consider two cases.\\
\smallskip

\textbf{Case one:} \text{$x\in T$}. In this case we have that $x\in C^S$ for some $S$ connected component of $\partial E(\ell_0) \setminus \partial \Omega_{\delta}$.  Thanks to Proposition \ref{prop:ontheangle} and to the construction of $E(\ell_0)$, that all the circular sectors of $T$, but the four central ones $C^{S_1},\ldots,C^{S_4}$, span an angle smaller than or equal to $\pi/2$. Moreover the remaining sectors $C^{S_1},\ldots,C^{S_4}$ which can span an angle greater than $\pi/2$ spans an angle $\beta_i(\tau)<\pi$ still due to Proposition \ref{prop:ontheangle}. In particular we can invoke Lemma \ref{forsecisifalemma} to conclude that there is a $\delta_0=\delta_0(\beta_1(\tau),\ldots,\beta_4(\tau), H)$ such that if $\delta<\delta_0$
    \[
    \dist(x,\partial \Omega_{\delta})\leq H^{-1}
    \]
and thus $x\notin \Omega_{\delta}^{H^{-1}}$.\\
\smallskip

\textbf{Case two:} \text{$x\in R$}. In this case we immediately have
    \[
    \dist(x,\partial \Omega_{\delta})= \dist(x,\partial E(\ell_0)\cap \partial \Omega_{\delta})=\dist(x,\partial E(\ell_0))\leq H^{-1}
    \]
and then again $x\notin \Omega_{\delta}^{H^{-1}}$.
\end{proof}
\begin{remark}
{\rm 
A careful analysis of the proof of Theorem \ref{thm:sharpness} in the Lipschitz case tells us that we can produce, for any $k\geq 6$, an open bounded set with Lipschitz boundary and having a Cheeger set $E$ such that $\H^0(\partial E\cap \partial \Omega)=k$. Naturally arises the question of whether the value $k=6$ represents some sort of threshold below which we cannot go in performing the construction. In particular, for $k\leq 6$ what fails in the argument is the positivity of the term $\left(\frac{1}{\tan(\sfrac{\pi}{k})}-\frac{\pi}{2}\right)$ in \eqref{criticalEQ}. For $k=3,4,5$ indeed it holds
    \[
    \lim_{\varrho\rightarrow \sfrac{2}{H}}\L^2\left(E(\varrho)^{H^{-1}}\right)<\pi H^{-2}
    \]
and since $\varrho\mapsto\L^2\left(E(\varrho)^{H^{-1}}\right)$ is increasing we would also have 
    \[
    \L^2\left(E(\varrho)^{H^{-1}}\right)< \pi H^{-2}.
    \]
This prevents us from apply Theorems \ref{giogian1}, \ref{giogian2} and we cannot guarantee that $E(\varrho_0)$ will be self-Cheeger for some values of $\varrho_0\in (0,\sfrac{2}{H})$ (actually it will not be self-Cheeger for any value of $\varrho\in (0,2/H)$). It is not the purpose of this analysis to investigate further this question and thus we post-pone the treatment of this topic to future work.
} 
\end{remark}
\section{Appendix}
\addtocontents{toc}{\setcounter{tocdepth}{-10}}
 \subsection*{Proof of Proposition \ref{propo:HausMesChar}}
  \addtocontents{toc}{\setcounter{tocdepth}{1}}

Fix $j \in \N$ and for each $k>j$ let $\mathcal{Q}_k$ be a countable family of cubes of edge $2^{-k}$ yielding the dyadic division of $\R^d$ into a grid. Define for a set $E \subset \R^d$
    \[
    \H_{\star,j}^s(E):=\inf\left\{\left. \sum_{\substack{Q\in \mathcal{Q}_{k}: \\ E\cap Q \neq \emptyset} } 2^{-(k+1)s} \ \right| \ k>j   \right\}
    \]
 and
    \[
   \H^s_{\star}(E):= \lim_{j\rightarrow +\infty} \H_{\star,j}^s(E).
    \]
Notice that the sum is taken over the cubes $Q$ of $\mathcal{Q}_k$ that intersect $E$, i.e. for each choice of $k$ the partition $\mathcal{Q}_k$ changes. It is a well known fact that $\H^s_{\star}$ is a measure on $\R^d$ and is called the \textit{dyadic Hausdorff measure}. Moreover (see for instance \cite{falconer2004fractal} for a full treatment of the whole topic)
    \[
    \H^s(E)\leq \H^s_{\star}(E)\leq C_{d,s} \H^s(E) 
    \]
for a constant $C_{d,s}$ depending on $d,s$ only. Therefore, if $\H^s(N)=0$ then $\H^s_{\star}(N)=0$. In particular, for any $ \varepsilon>0$ there is a $j_{\varepsilon}$ such that
    \[
    \H_{\star,j}^s(N)\leq \varepsilon \ \ \text{for all $j>j_{\varepsilon}$}.
    \]
Thus there exists $k >j_{\varepsilon}$ and a dyadic decomposition $\mathcal{Q}_k$ of $\R^d$ such that
    \[
    \sum_{\substack{Q\in \mathcal{Q}_{k}: \\ N\cap Q \neq \emptyset} } 2^{-(k+1)s}\leq 2\varepsilon.
    \] Relabeling $k$ we conclude the proof.
 \qed

 \subsection{Proof of Lemma \ref{lem:tecnico}}
 We first recall the following result.
 
 \begin{lemma}{\cite[Lemma 17.21]{maggiBOOK}} \label{giulio}
 If $E$ is a set of finite perimeter and $A$ is an open set such that $\H^{d-1}(\partial^*E\cap A )>0$, then there exist $\sigma_0= \sigma_0(E,A)>0$ and $C = C(E, A) <\infty$  such that for every $\sigma \in (-\sigma_0,\sigma_0)$ we can find a set of finite perimeter $F$ with $F\Delta E \subset \subset A$ and
\[
\L^d(F) = \L^d(E) + \sigma,\ \  |P(F;A) -  P(E;A)| \leq C|\sigma|.
\]
 \end{lemma}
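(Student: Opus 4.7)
The plan is to deform $E$ by the flow of a compactly supported vector field on $A$ that infinitesimally changes the volume of $E$ at a nonzero rate, then invoke the inverse function theorem to realize every prescribed small volume increment $\sigma$ while controlling the perimeter change by a first-variation computation.

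First I would use the hypothesis $\H^{d-1}(\partial^*E\cap A)>0$ to select $X\in C_c^{\infty}(A;\R^d)$ satisfying
\[
c_0:=\int_{\partial^*E}X\cdot\nu_E\,d\H^{d-1}\neq 0.
\]
Such $X$ exists because the $\R^d$-valued Radon measure $\nu_E\,\H^{d-1}\llcorner(\partial^*E\cap A)$ has total variation $P(E;A)>0$, so it cannot annihilate every smooth compactly supported test field. I would then let $\{\Phi_t\}_{t\in\R}$ denote the global flow of $X$, which equals the identity outside $A$, and set $F_t:=\Phi_t(E)$, so that $F_t\Delta E\subset\subset A$ automatically. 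Combining the area formula with the divergence theorem gives $\tfrac{d}{dt}\big|_{t=0}\L^d(F_t)=\int_E\dive X\,dx=c_0\neq 0$, and hence $t\mapsto\L^d(F_t)-\L^d(E)$ is a $C^1$ diffeomorphism near $t=0$. The inverse function theorem then furnishes $\sigma_0>0$ and a $C^1$ map $\sigma\mapsto t(\sigma)$ on $(-\sigma_0,\sigma_0)$ with $\L^d(F_{t(\sigma)})=\L^d(E)+\sigma$ and $|t(\sigma)|\le C_1|\sigma|$.

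To bound the perimeter, I would expand $\Phi_t(x)=x+tX(x)+O(t^2)$ uniformly in $x$ and use the induced expansion of the tangential Jacobian $J^E\Phi_t=1+t\,\dive^E X+O(t^2)$, with remainder depending only on $\|X\|_{C^2}$. Because $\Phi_t$ is the identity outside $A$, the area formula for rectifiable sets yields $P(F_t;A)=\int_{\partial^*E\cap A}J^E\Phi_t\,d\H^{d-1}$; subtracting $P(E;A)=\H^{d-1}(\partial^*E\cap A)$ and using the finiteness of this measure I obtain $|P(F_t;A)-P(E;A)|\le C_2|t|$. Combining this with $|t(\sigma)|\le C_1|\sigma|$ delivers the required estimate with $C=C_1 C_2$. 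The only delicate point — and the only place where the hypothesis $\H^{d-1}(\partial^*E\cap A)>0$ is used in an essential way — is the selection of $X$ with $c_0\neq 0$; everything else is routine first-variation bookkeeping for diffeomorphic deformations of finite-perimeter sets.
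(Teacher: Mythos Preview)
Your proof is correct and is essentially the standard argument (and in fact the one given in Maggi's book, from which the lemma is quoted). Note, however, that the paper does not prove this lemma at all: it merely recalls it as \cite[Lemma 17.21]{maggiBOOK} and then applies it in the proof of Lemma~\ref{lem:tecnico}. So there is no ``paper's own proof'' to compare against; your write-up simply supplies the omitted details, and does so along the expected lines---a compactly supported vector field with nonzero flux through $\partial^*E\cap A$, its flow, the inverse function theorem for the volume, and the first-variation bound for the perimeter.
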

\vskip0.2cm \noindent Now we consider $x\in (\partial^* E\cap \Omega)\setminus \Sigma$ such that 
    \begin{itemize}
        \item [a)]$B_{\varrho_0}(x)\subset \subset \Omega\setminus \Sigma$ for some $\varrho_0>0$;
        \item[b)] $\H^{d-1}(\partial^* E \cap B_{\varrho_0}(x))>0 $;
        \item[c)] $\varrho_1=\mathrm{dist}(x,\Sigma)>0$.
    \end{itemize}
Let $\sigma_0=\sigma_0(E,B_{\varrho_0}(x))$, $C=C(E,B_{\varrho_0}(x))$ be the constants given in the Lemma \ref{giulio} above. Let us set $r_0<\min\left\{\left(\frac{\sigma_0 }{\omega_d}\right)^{\sfrac{1}{d}},\varrho_0,\varrho_1\right\}$. Let $ y\in \Sigma $, and let us take $r<r_0$ so that $  \overline{B_r(y)}\cap \overline{B_{\varrho_0}(x)}=\emptyset $. Then we set  
    \[
    \sigma:= \L^d(E)-\L^d(E\setminus B_{r}(y)).
    \]
and notice that 
    \[
   | \sigma|\leq \L^d(B_r(y))\leq \omega_d r^d \leq \omega_d r_0^d\leq\sigma_0. 
     \]
It is possible find a set of finite perimeter $F'$ with $F'\Delta E\subset \subset B_{\varrho_0}(x)$ and such that
    \begin{gather}
\L^d(F')=\L^d(E)+\sigma \label{jack}\\
  |P(F';B_{\varrho_0}(x))-P(E;B_{\varrho_0}(x))|\leq C |\sigma|\label{barbossa}
    \end{gather}
We now define
    \[
    F:= \left((E\setminus B_r(y) ) \cap \overline{B_r(y)} \right) \cup \left(F'\cap \overline{B_r(y)}^c\right)
    \]
and we notice that, since $F'=E$ on $B_r(y)$, by exploiting \eqref{jack} we can achieve
    \begin{align*}
        \L^d(F)
        =\L^d(E).
    \end{align*}
This, combined with the fact that $E$ was a Cheeger set and that by construction $F\subseteq \Omega$, gives
    \[
    \frac{P(E)}{\L^d(E)}\leq \frac{P(F)}{\L^d(F)}= \frac{P(F)}{\L^d(E)}
    \]
which leads us to
    \[
    P(E)\leq P(F).
    \]
We observe that
    \begin{align}
        P(E)&=P\left(E;\overline{B_r(y)}\right)+P\left(E;\overline{B_r(y)}^c\right)\nonumber\\
        &=P\left(E;\overline{B_r(y)}\right)+P\left(E; \left(B_{\varrho_0}(x)\cup \overline{B_r(y)}\right)^c\right)+P(E;B_{\varrho_0}(x))\label{tcha}
    \end{align}
Moreover, since $P\left(F;\overline{B_r(y)}\right)=P\left(E\setminus B_r(y);\overline{B_r(y)}\right)\leq d\omega_d r^{d-1}$ we have the estimate
 \begin{align}
        P(F)&\leq d \omega_d r^{d-1} +P\left(F;\overline{B_r(y)}^c\right)\nonumber\\
        &=d \omega_d r^{d-1}+P\left(F; \left(B_{\varrho_0}(x) \cup \overline{B_r(y)}\right)^c\right)+P(F;B_{\varrho_0}(x))\nonumber\\
        &\leq d \omega_d r^{d-1}+P\left(E; \left(B_{\varrho_0}(x)\cup  \overline{B_r(y)}\right)^c\right)+P(F';B_{\varrho_0}(x))\label{boom}
    \end{align}
since $F=F'=E$ on $\left(B_{\varrho_0}(x)\cup \overline{B_r(y)}\right)^c$ and $F=F'$ on $B_{\varrho_0}(x)$. Then, from \eqref{boom},\eqref{tcha} we have, by exploiting \eqref{barbossa},
    \begin{align}
    P\left(E;\overline{B_r(y)}\right)&\leq  d\omega_d r^{d-1}+P(F';B_{\varrho_0}(x))-P(E;B_{\varrho_0}(x))\nonumber\\
   & \leq  d\omega_d r^{d-1}+C|\sigma|\leq  C_0 r^{d-1}\label{sparrow}
     \end{align}
with $C_0=C_0(x,\varrho_0,\varrho_1,\sigma_0,E)=C_0(\Sigma)$. Since
    \[
      P(E;B_r(y))\leq   P\left(E;\overline{B_r(y)}\right)
      \]
and since \eqref{sparrow} is in force for all $y\in \Sigma$ and for all $r<r_0=r_0(\Sigma)$ we conclude.

\subsection*{Cheeger problem as an obstacle problem}
 Let us briefly treat the obstacle problem of which the graph representation $f_E$ is a solution, for $E$ Cheeger set. Assume that $\partial \Omega \in C^1$ and let $x\in \partial E\cap \partial \Omega$ (assume $x=0$, $\nu_E(x)=\nu_{\Omega}(x)=e_d$). Consider the graph representation of $E$ in $D_r(R)=Q_r\times (-R,R)$, $Q_r\subset \R^{d-1}$, $f_E:Q_r\subset \R^{d-1}\rightarrow (-R,R)$, $r,R>0$, $f_E\in C^1(Q_r)$ (Assertion (III) Theorem \ref{thm:reg}). Then
    \[
    P(E;D_r(R))=\int_{Q_r} \sqrt{1+|\nabla f_E|^2}\d x, \ \ \L^d(E\cap D_r(R))=\int_{Q_r} f_E \, \d x
    \]
Therefore
    \[
    h(\Omega)=\frac{P(E)}{\L^d(E)}=\frac{P(E;D_r(R)^c)+P(E;D_r(R))}{\L^d(E\cap D_r(R)) +\L^d(E\cap D_r(R)^c)}=\frac{P(E;D_r(R)^c)+\int_{Q_r} \sqrt{1+|\nabla f_E|^2}\d x}{\L^d(E\cap D_r(R)^c)+\int_{Q_r}f_E\d x}
    \]
and thus
    \begin{equation}\label{boiadeh}
\L^d(E\cap D_r(R)^c) h(\Omega)=P(E;D_r(R)^c)+\int_{Q_r} (\sqrt{1+|\nabla f_E|^2}-h(\Omega)f_E)\d x.
    \end{equation}
Let $w \in H^1_0(Q_r)$, $w=f_E$ on $\partial Q_r$ and $w\leq f_{\Omega}$ on $Q_r$. We set
    \[
    F_w:=(E\setminus D_r(R)^c) \cup \{(x,s) \ | \ x\in Q_r,  \ s\leq w(x)\}
    \]
Notice that we have still $F_w\subset \Omega$. Moreover $E=F_w$ on $D_r(R)^c$ and
    \[
    P(F_w;D_r(R))= \int_{Q_r} \sqrt{1+|\nabla w|^2}\d x, \ \ \L^d(F_w\cap D_r(R))=\int_{Q_r}w\d x.
    \]
In particular
    \[
   \frac{P(E)}{\L^d(E)}\leq \frac{P(F_w)}{\L^d(F_w)}.
    \]
This yields
    \[
    h(\Omega)\leq \frac{P(E;D_r(R)^c)+\int_{Q_r} \sqrt{1+|\nabla w|^2}\d x}{\L^d(E\cap D_r(R)^c)+\int_{Q_r}w\d x}
    \]
which is, by invoking \eqref{boiadeh},
    \begin{align*}
    \L^d(E\cap D_r(R)^c) h(\Omega) +\int_{Q_r}h(\Omega)w\d x & \leq P(E;D_r(R)^c)+\int_{Q_r} \sqrt{1+|\nabla w|^2}\d x\\
    \int_{Q_r} (\sqrt{1+|\nabla f_E|^2}-h(\Omega)f_E)\d x &\leq \int_{Q_r} (\sqrt{1+|\nabla w|^2}-h(\Omega)w)\d x.
    \end{align*}
This means that $f_E$ solves \eqref{eqn:OBS}. Morever, this minimality property allows us also to conclude that, for all $\varphi\in C^{\infty}_c(Q_r)$, $\varphi\geq 0$, it holds
    \begin{align*}
  \int_{Q_r}  \frac{\nabla u\cdot \nabla \varphi }{\sqrt{1+|\nabla u|^2}} \d x  \leq  
  \int_Q h(\Omega)\varphi \d x.
    \end{align*}
Equivalently, being the above valid for all $\varphi \geq 0$ we have
    \[
    -\dive\left(\frac{\nabla u}{\sqrt{1+|\nabla u|^2}}\right)\leq h(\Omega) \ \ \ \text{weakly on $Q_r$}.
    \]

\bibliography{references}

\begin{thebibliography}{10}

\bibitem{alexandrov1962characteristic}
A.~D. Alexandrov.
\newblock A characteristic property of spheres.
\newblock {\em Annali di Matematica Pura ed Applicata}, 58(1):303--315, 1962.

\bibitem{AmbrosioCarlottoMassaccesi}
L.~Ambrosio, A.~Carlotto, and A.~Massaccesi.
\newblock {\em Lectures on Elliptic Partial Differential Equations}, volume~18.
\newblock Springer, 2019.

\bibitem{ambrosio2000functions}
L.~Ambrosio, N.~Fusco, and D.~Pallara.
\newblock {\em Functions of bounded variation and free discontinuity problems}.
\newblock Oxford Mathematical Monographs. The Clarendon Press, Oxford
  University Press, New York, 2000.

\bibitem{bucur2016faber}
D.~Bucur and I.~Fragal{\`a}.
\newblock A faber--krahn inequality for the cheeger constant of $n$-gons.
\newblock {\em The Journal of Geometric Analysis}, 26(1):88--117, 2016.

\bibitem{bucur2018honeycomb}
D.~Bucur, I.~Fragal{\`a}, B.~Velichkov, and G.~Verzini.
\newblock On the honeycomb conjecture for a class of minimal convex partitions.
\newblock {\em Transactions of the American Mathematical Society},
  370(10):7149--7179, 2018.

\bibitem{buttazzo2007selection}
G.~Buttazzo, G.~Carlier, and M.~Comte.
\newblock On the selection of maximal cheeger sets.
\newblock {\em Differential and Integral Equations}, 20(9):991--1004, 2007.

\bibitem{Campa}
S.~Campanato.
\newblock Propriet{\`a} di h{\"o}lderianit{\`a} di alcune classi di funzioni.
\newblock {\em Annali della Scuola Normale Superiore di Pisa-Classe di
  Scienze}, 17(1-2):175--188, 1963.

\bibitem{caroccia2017cheeger}
M.~Caroccia.
\newblock Cheeger n-clusters.
\newblock {\em Calculus of Variations and Partial Differential Equations},
  56(2):30, 2017.

\bibitem{CarLit17}
M.~Caroccia and S.~Littig.
\newblock The cheeger n-problem in terms of bv functions.
\newblock {\em Journal of Convex Analysis}, 2017.

\bibitem{caroccia2014note}
M.~Caroccia and R.~Neumayer.
\newblock A note on the stability of the cheeger constant of $n$-gons.
\newblock {\em Journal of Convex Analysis}, 22(4):1207–1213, 2015.

\bibitem{caselleschambollenovaga2007}
V.~Caselles, A.~Chambolle, and M.~Novaga.
\newblock Uniqueness of the cheeger set of a convex body.
\newblock {\em Pacific J. Math.}, 232(1):77--90, 2007.

\bibitem{caselleschambollenovaga2010}
V.~Caselles, A.~Chambolle, and M.~Novaga.
\newblock Some remarks on uniqueness and regularity of cheeger sets.
\newblock {\em Rend. Semin. Mat. Univ. Padova}, 123:191--201, 2010.

\bibitem{canete}
A.~Cañete.
\newblock Cheeger sets for rotationally symmetric planar convex bodies.
\newblock {\em Preprint 2021}.

\bibitem{Cheeger4diameter}
J.~Cheeger.
\newblock The relation between the laplacian and the diameter for manifolds of
  non-negative curvature.
\newblock {\em Archiv der Mathematik}, 19(5):558--560, 1968.

\bibitem{cheeger1969lower}
J.~Cheeger.
\newblock A lower bound for the smallest eigenvalue of the laplacian.
\newblock In {\em Proceedings of the Princeton conference in honor of Professor
  S. Bochner}, pages 195--199, 1969.

\bibitem{DeGiorgiSOFP1}
E.~De~Giorgi.
\newblock Su una teoria generale della misura $(r-1)$-dimensionale in uno
  spazio ad $r$-dimensioni.
\newblock {\em Ann. Mat. Pura Appl. (4)}, 36:191--213, 1954.

\bibitem{DeGiorgiSOFP2}
E.) De~Giorgi.
\newblock Nuovi teoremi relativi alle misure $(r-1)$-dimensionali in uno spazio
  ad $r$ dimensioni.
\newblock {\em Selected papers, Ennio De Giorgi}, page 128, 1955.

\bibitem{Ennio}
E.~de~Giorgi.
\newblock Sulla differenziabilit\`a e l'analiticit\`a delle estremali degli
  integrali multipli regolari.
\newblock {\em Matematika}, 4(6):23--38, 1960.

\bibitem{Ennio+Guido}
E.~De~Giorgi and G.~Stampacchia.
\newblock Removable singularities of minimal hypersurfaces.
\newblock {\em Selected papers, Ennio De Giorgi}, page 278.

\bibitem{de1965sulle}
E.~De~Giorgi and G.~Stampacchia.
\newblock Sulle singolarit{\`a} eliminabili delle ipersuperficie minimali.
\newblock {\em Atti Accad. Naz. Lincei Rend. Cl. Sci. Fis. Mat. Natur.(8)},
  38:352--357, 1965.

\bibitem{de2004gauss}
T.~De~Pauw and W.~F. Pfeffer.
\newblock The gauss--green theorem and removable sets for pdes in divergence
  form.
\newblock {\em Advances in Mathematics}, 183(1):155--182, 2004.

\bibitem{delgadino2019alexandrov}
M.~G. Delgadino and F.~Maggi.
\newblock Alexandrov’s theorem revisited.
\newblock {\em Anal. PDE}, 12(6):1613--1642, 2019.

\bibitem{falconer2004fractal}
K.~Falconer.
\newblock {\em Fractal geometry: mathematical foundations and applications}.
\newblock John Wiley \& Sons, 2004.

\bibitem{Falconer3}
K.~J. Falconer.
\newblock {\em Techniques in fractal geometry}.
\newblock John Wiley and Sons, Ltd., Chichester, 1997.

\bibitem{suarez}
G.~Figueiredo and A.~Suarez.
\newblock Existence of positive solutions for prescribed mean curvature
  problems with nonlocal term via sub-and supersolution method.
\newblock {\em Mathematical Methods in the Applied Sciences},
  43(15):8496--8505, 2020.

\bibitem{giaquinta1981remarks}
M.~Giaquinta.
\newblock Remarks on the regularity of weak solutions to some variational
  inequalities.
\newblock {\em Mathematische Zeitschrift}, 177(1):15--31, 1981.

\bibitem{gonzalez1981minimal}
E.~Gonzalez, U.~Massari, and I.~Tamanini.
\newblock Minimal boundaries enclosing a given volume.
\newblock {\em Manuscripta mathematica}, 34(2-3):381--395, 1981.

\bibitem{jones2001lebesgue}
F.~Jones.
\newblock {\em Lebesgue integration on Euclidean space}.
\newblock Jones \& Bartlett Learning, 2001.

\bibitem{kawohl2008p}
B.~Kawohl and M.~Novaga.
\newblock The p-laplace eigenvalue problem as p→ 1 and cheeger sets in a
  finsler metric.
\newblock {\em Journal of Convex Analysis}, 15(3):623--634, 2008.

\bibitem{leonardi2015overview}
G.~P. Leonardi.
\newblock An overview on the cheeger problem.
\newblock In {\em New trends in shape optimization}, pages 117--139. Springer,
  2015.

\bibitem{leonardi2017cheeger}
G.~P. Leonardi, R.~Neumayer, and G.~Saracco.
\newblock The cheeger constant of a jordan domain without necks.
\newblock {\em Calculus of Variations and Partial Differential Equations},
  56(6):164, 2017.

\bibitem{leonardi2016cheeger}
G.~P. Leonardi and A.~Pratelli.
\newblock On the cheeger sets in strips and non-convex domains.
\newblock {\em Calculus of Variations and Partial Differential Equations},
  55(1):15, 2016.

\bibitem{leonardi2018prescribed}
G.~P. Leonardi and G.~Saracco.
\newblock The prescribed mean curvature equation in weakly regular domains.
\newblock {\em Nonlinear Differential Equations and Applications NoDEA},
  25(2):9, 2018.

\bibitem{leonardi2018two}
G.~P. Leonardi and G.~Saracco.
\newblock Two examples of minimal cheeger sets in the plane.
\newblock {\em Annali di Matematica Pura ed Applicata (1923-)},
  197(5):1511--1531, 2018.

\bibitem{leonardi2019minimizers}
G.~P Leonardi and G.~Saracco.
\newblock Minimizers of the prescribed curvature functional in a jordan domain
  with no necks.
\newblock {\em ESAIM Control Optim. Calc. Var.}, 26:76, 2020.

\bibitem{leonardi2020rigidity}
G.P. Leonardi and G.~Saracco.
\newblock Rigidity and trace properties of divergence-measure vector fields.
\newblock {\em Adv. Calc. Var.}, 2020.

\bibitem{maggiBOOK}
F.~Maggi.
\newblock {\em Sets of finite perimeter and geometric variational problems},
  volume 135 of {\em Cambridge Studies in Advanced Mathematics}.
\newblock Cambridge University Press, Cambridge, 2012.
\newblock An introduction to geometric measure theory.

\bibitem{miranda1971frontiere}
M.~Miranda.
\newblock Frontiere minimali con ostacoli.
\newblock {\em Annali dell’Universita di Ferrara}, 16(1):29--37, 1971.

\bibitem{parini2011introduction}
E.~Parini.
\newblock An introduction to the cheeger problem.
\newblock {\em Surv. Math. Appl.}, 6:9--21, 2011.

\bibitem{pokrovskii2005removable2}
A.~V. Pokrovskii.
\newblock Removable singularities of p-harmonic functions.
\newblock {\em Differential Equations}, 41(7):941--952, 2005.

\bibitem{pokrovskii2005removable3}
A.~V. Pokrovskii.
\newblock Removable singularities of solutions of second-order divergence-form
  elliptic equations.
\newblock {\em Mathematical Notes}, 77(3-4):391--399, 2005.

\bibitem{pokrovskii2005removable}
A.~V. Pokrovskii.
\newblock Removable singularities of solutions of the minimal surface equation.
\newblock {\em Functional Analysis and Its Applications}, 39(4):296--300, 2005.

\bibitem{pokrovskii2009removableSurvey}
A.~V. Pokrovskii.
\newblock Removable singularities of solutions of elliptic equations.
\newblock {\em Journal of Mathematical Sciences}, 160(1):61--83, 2009.

\bibitem{ponce2013singularities}
A.~C. Ponce.
\newblock Singularities of the divergence of continuous vector fields and
  uniform hausdorff estimates.
\newblock {\em Indiana University Mathematics Journal}, pages 1055--1074, 2013.

\bibitem{serrin1965isolated}
J.~Serrin.
\newblock Isolated singularities of solutions of quasi-linear equations.
\newblock {\em Acta Mathematica}, 113:219--240, 1965.

\bibitem{Serrin2}
J.~Serrin.
\newblock Removable singularities of solutions of elliptic equations. ii.
\newblock {\em Archive for Rational Mechanics and Analysis}, 20(3):163--169,
  1965.

\bibitem{simon1977theorem}
L.~Simon.
\newblock On a theorem of de giorgi and stampacchia.
\newblock {\em Mathematische Zeitschrift}, 155(2):199--204, 1977.

\bibitem{sternberg1999connectivity}
P.~Sternberg and K.~Zumbrun.
\newblock On the connectivity of boundaries of sets minimizing perimeter
  subject to a volume constraint.
\newblock {\em Communications in Analysis and Geometry}, 7(1):199--220, 1999.

\end{thebibliography}
\bibliographystyle{plain}

\end{document}